\documentclass{amsart}

\usepackage{amssymb,amsmath,graphicx}

\usepackage{xcolor}

\usepackage[shortlabels]{enumitem}

\usepackage[utf8]{inputenc}
\usepackage[T1]{fontenc}

\newtoks\prt

\numberwithin{equation}{section}

\newtheorem{thm}{Theorem}[section]

\newtheorem{lemma}[thm]{Lemma}

\newtheorem{prop}[thm]{Proposition}

\newtheorem{cor}[thm]{Corollary}

\newtheorem{theoreml}{Theorem}[section]

\newcommand{\thistheoremname}{}
\newtheorem*{thm*}{\thistheoremname}
\newenvironment{thmx*}[1]
  {\renewcommand{\thistheoremname}{#1}%
   \begin{thm*}}
  {\end{thm*}}

\theoremstyle{definition}

\newtheorem{remark}[thm]{Remark}

\newtheorem{definition}[thm]{Definition}

\def\eqn#1$$#2$${\begin{equation}\label#1#2\end{equation}}

\def\A{\mathcal A}

\def\C{\mathcal C}

\def\F{\mathcal F}

\def\T{\mathcal T}

\def\e{e^\ast}

\def\s{u^\ast}

\def\t{v^\ast}

\def\M{\mathcal M}

\def\P{\mathcal P}

\def\ep{\varepsilon}

\def\en{\mathbb N}

\def\el{\mathbb L}

\def\er{\mathbb R}

\def \id {\operatorname{id}}

\def \ext {\operatorname{ext}}

\def\supp{\operatorname{supp}}

\def\span{\operatorname{span}}

\def \reg {\partial _{\kern1pt\text{reg}}}

\newcommand{\norm}[1]{\left\|#1\right\|}

\newcommand{\abs}[1]{\left| #1  \right|}

\newcommand{\olb}[1]{\overline{\boldsymbol{#1}}}

\newcommand{\rstr}{\restriction}
\newcommand{\concat}{{^\smallfrown}}
\newcommand{\emptyseq}{\epsilon}

\newcommand{\TT}{\mathfrak{T}}

\newcommand{\eps}{\varepsilon}

\newcommand{\eodef}{\hfill$\dashv$}

\title{Complementability of separable spaces $\mathcal{C}(K)$ in Banach spaces}
\author[J.~Rondo\v s]{Jakub Rondo\v s}
\address{Department of Mathematics, Faculty of Electrical Engineering, Czech Technical University in Prague, Technicka 2,
166 27 Prague 6, Czech Republic.}
\email{jakub.rondos@gmail.com}

\author[D.~Sobota]{Damian Sobota}
\address{Kurt G\"odel Research Center, Department of Mathematics, Vienna University, Kolingasse 14--16, 1090 Vienna, Austria.}
\email{damian.sobota@univie.ac.at}

\thanks{J. Rondo\v{s} was supported by the Austrian Science Fund (FWF), grant I~5918-N, and by the Czech Science Foundation (GA\v{C}R), grant GA23-04776S. D. Sobota was supported by the Austrian Science Fund (FWF),  grant ESP~108-N}

\subjclass[2020]{Primary: 46E15, 46B03, 46B04. Secondary: 47B38, 54A25, 54C35.}

\keywords{$C(K)$-space, Banach space of continuous functions, isometric embedding, isomorphic embedding, complemented subspace, tree}

\begin{document}
\begin{abstract}
For a metric compact space $L$ and a Banach space $E$, we provide a characterization of the complementability of the Banach space $\mathcal{C}(L)$ of continuous functions on $L$ inside $E$ in terms of the existence of a certain tree in the product $E \times E^*$, based on new descriptions of the Banach spaces $\mathcal{C}([1, \omega^{\alpha}])$ for countable ordinal numbers $\alpha$ and $\mathcal{C}(2^{\omega})$. Applying this general result in the case where $E=\mathcal{C}(K)$ for some compact space $K$, 
we further obtain a characterization of the existence of a positively $1$-complemented positively isometric copy of $\mathcal{C}(L)$ inside $\mathcal{C}(K)$ in terms of the topology of $K$ and the space of probability Radon measures on $K$. In the process, we also prove a variant of the classical Holszty\'{n}ski theorem for isometric embeddings onto complemented subspaces.
\end{abstract}

\maketitle

\section{Introduction}

For a compact space $L$, by $\C(L)$ we denote as usual the Banach space of all continuous real-valued functions on $L$ endowed with the supremum norm. We recall a standard fact stating that $\C(L)$ is separable if and only if $L$ is metrizable. The first main purpose of this paper is, given a metrizable compact space $L$ and a Banach space $E$, to provide a characterization of when the space $\C(L)$ is complemented (isometrically or isomorphically) in $E$. This we achieve in several steps. 

First, we develop in Section \ref{sec:new_descriptions} a useful isometric description of spaces $\C(L)$, where $L$ is countable or $L$ is the Cantor space $2^{\omega}$, in terms of certain spaces of real-valued functions on special trees on the first infinite ordinal number $\omega$, see Definitions \ref{def:tree} and \ref{def:tree_space} and Theorems \ref{A_lambda is C(omega^alpha)} and \ref{A_Lambda is C(2^omega)}. Similar ideas, but not identical, have appeared in the works of Bourgain \cite{Bourgain1979}, Dilworth \textit{et al.} \cite{Dilworth-et-al}, Rosenthal \cite{RosenthalC(K)}, etc.---the main difference in our approach when compared to the works of the above authors is that their spaces were defined as the completions of certain normed spaces consisting of elements supported by finite trees, while our spaces are described in a totally explicit form with an aid of appropriately chosen Markushevich bases, which is crucial for our purposes. 

Then, in Section \ref{sec:ctcf}, for a countable successor ordinal $\alpha$ (resp. $\infty$) and $m\in\mathbb{N}$, we introduce a notion of an \emph{(isometric) projectional tree of rank $\alpha$ and order $m$} in a given Banach space $E$, being a suitably constructed tree-like structure in the product $E \times E^*$, see Definition \ref{def:ctcf} for details. Using this notion, we prove our first main result, which characterizes the existence of a complemented copy of the separable space $\C(L)$ in $E$ for $L$ countable or $L=2^\omega$.

\begin{theoreml}\label{thm:mainA}
Let $E$ be a Banach space.

\vspace{4pt}
\noindent (1) For a countable ordinal $\alpha$ and $m\in\en$, the following assertions are equivalent: 
    \begin{enumerate}[(i)]
        \item $\C([1, \omega^\alpha m])$ is isometric to a complemented subspace of $E$,
        \item $E$ admits an isometric projectional tree of rank $\alpha+1$ and order $m$.
    \end{enumerate}

\vspace{4pt}
\noindent (2) The following assertions are equivalent:
    \begin{enumerate}[(i)]
        \item $\C(2^\omega)$ is isometric to a complemented subspace of $E$,
        \item $E$ admits an isometric projectional tree of rank $\infty$ and order $1$.
    \end{enumerate}
\end{theoreml}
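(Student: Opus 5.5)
The plan is to route both parts of Theorem \ref{thm:mainA} through the tree descriptions of Section \ref{sec:new_descriptions}. By Theorem \ref{A_lambda is C(omega^alpha)}, $\C([1,\omega^\alpha m])$ is isometric to the tree space $A_\lambda$ built on a well-founded tree $\lambda$ on $\omega$ of rank $\alpha+1$ with $m$ maximal branches at the root level; I take this $\lambda$ to be the tree specified in that theorem. By Theorem \ref{A_Lambda is C(2^omega)}, $\C(2^\omega)$ is isometric to $A_\Lambda$ for the full (ill-founded) tree $\Lambda$. Both descriptions come with an explicit Markushevich basis: vectors $e_s$ and biorthogonal functionals $e_s^*$ indexed by the nodes $s$ of the tree. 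Definition \ref{def:ctcf} is presumably designed to capture exactly this system inside $E\times E^*$. So each equivalence reduces to two transfers, each phrased in terms of this biorthogonal system.

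For (i)$\Rightarrow$(ii), let $T\colon A_\lambda\to E$ be an isometric embedding and $P\colon E\to T(A_\lambda)$ a projection of norm one (the isometric setting should force $\norm{P}=1$, or at least whatever norm condition Definition \ref{def:ctcf} requires). I set $x_s=Te_s$ and $x_s^*=(T^{-1}P)^*e_s^*=e_s^*\circ T^{-1}\circ P$. Biorthogonality is immediate, since $x_s^*(x_t)=e_s^*(e_t)$. Every norm condition in the definition is an identity about finite combinations of the $e_s$, or about the $e_s^*$ evaluated on such combinations. These identities are preserved because $T$ is an isometry and $\norm{x^*_s}\le\norm{e_s^*}$. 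The result is an isometric projectional tree of the required rank and order.

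For (ii)$\Rightarrow$(i), start from a tree $(x_s,x_s^*)_{s\in\lambda}$ in $E\times E^*$ and define $T$ on $\span\{e_s\}$ by $Te_s=x_s$. The norm conditions of the definition should give that $T$ is an isometry there, so it extends to an isometry $T\colon A_\lambda\to E$. For the projection, define $Q\colon E\to A_\lambda$ by requiring $e_s^*(Qx)=x_s^*(x)$ for every node $s$. Then $P=TQ$ satisfies $P x_s=x_s$ by biorthogonality, so $P$ is a projection onto $T(A_\lambda)$ once $Q$ is shown bounded.

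The main obstacle is exactly this last step: showing that the assignment $s\mapsto x_s^*(x)$ is the coefficient sequence of a genuine element of $A_\lambda$, that is, a function on the tree satisfying the continuity and limit conditions that define $A_\lambda$, with norm at most $\norm{x}$. The first thing I would try is to use the explicit form of $A_\lambda$: elements correspond to functions whose values at a node are the limits along the successors, and the coefficients are differences between the value at a node and the value at its parent. I would show that the functionals $x_s^*$ in the definition are exactly arranged so that the corresponding sums $\sum_{t\le s}x_t^*$ are weak$^*$-convergent along successors and bounded by one, which is presumably built into Definition \ref{def:ctcf}. This gives continuity and the norm bound. For $\Lambda$ the same argument applies, with convergence along infinite branches replacing the well-foundedness induction.
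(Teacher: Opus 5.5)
Your skeleton matches the paper's (Proposition~\ref{prop: complementability of tree spaces}). In one direction it takes $\rho(s)=P^*\big((T^*)^{-1}(\tilde{\delta}_s)\big)$. In the other it uses the coefficient map $S(e)_s=\langle\rho(s)-\rho(pred(s)),e\rangle$ and the projection $T\circ S$. Your $x^*_s$ are exactly the differences $\rho(s)-\rho(pred(s))$, and your partial sums $\sum_{t\preceq s}x^*_t$ are the paper's $\rho(s)$. Checking $S(T\tilde{\chi}_t)=\tilde{\chi}_t$ plus density of the $\tilde{\chi}_t$ is a fine way to see that $T\circ S$ is a projection. (Minor: $\C([1,\omega^\alpha m])$ corresponds to $\A_\Lambda^m$, the $\ell_\infty$-sum of $m$ copies of $\A_\Lambda$ with $\Lambda\in\TT_{\alpha+1}$, not to a single tree.)

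\textbf{Main gap.} You never identify, let alone verify, the condition that carries the content of Definition~\ref{def:ctcf}: $\rho$ must be bounded and \emph{uniformly weak$^*$ continuous} on $\Sigma$, for the metric transported from $2^\omega$ by $R$. This is not an identity about finite combinations, so it is not ``preserved'' for free in (i)$\Rightarrow$(ii).
\begin{itemize}
\item For $\Lambda\in\TT_{\alpha+1}$ it follows from the weak$^*$ continuity of $s\mapsto\tilde{\delta}_s$ (Proposition~\ref{prop:density}(iii)) and compactness of $\Lambda$.
\item $\el$ is not compact, so the paper passes through $\C(2^\omega)$. The map $P^*\circ(T^*)^{-1}\circ S^*\circ\delta$ is weak$^*$ continuous on the compact space $2^\omega$, hence uniformly so, and it restricts to $\rho$ along $R$ by Theorem~\ref{A_Lambda is C(2^omega)}.
\end{itemize}
In (ii)$\Rightarrow$(i), the step you call the main obstacle is resolved exactly by this hypothesis, via Lemma~\ref{lem: the properties of rho}. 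Boundedness gives (A.1) through the telescoping identity $\sum_{u\preceq t}S(e)_u=\langle\rho(t),e\rangle$; (L.1) gives (A.2); (L.2) gives (A.3).

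The weaker conditions you hope for would not suffice:
\begin{itemize}
\item Weak$^*$ convergence $\rho(s\concat n)\to\rho(s)$ controls only immediate successors, whereas (A.2) needs $\sup\{\abs{\langle\rho(t)-\rho(s),e\rangle}\colon s\concat n\preceq t\}\to0$.
\item On $\el$, convergence along every infinite branch does not give (A.3), which needs uniformity over all nodes of length $\ge n$ at once. For example, take $u\in\omega^\omega$ never taking the value $0$. The indicator of $\bigcup_n\el_{\langle u(0),\ldots,u(n-1),0\rangle}$ is continuous on $\el$ and eventually constant on every branch, yet jumps by $1$ at a node of every length.
\end{itemize}
This is why the definition demands \emph{uniform} weak$^*$ continuity, and why (A.3) is an independent requirement for $\A_\el$ (Corollary~\ref{cor:condition_a2} covers only countable rank).

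\textbf{Norm bookkeeping.} This is also wrong: an isometric copy of $\C(L)$ that is complemented need not be $1$-complemented. Let $F$ be the disjoint union of two convergent sequences with limits $x\neq y$. The hyperplane $\{g\in\C(F)\colon g(x)=g(y)\}$ is isometric to $\C([1,\omega])$ and is complemented. However, any projection onto it has the form $g\mapsto g-(g(x)-g(y))h$ with $h(x)-h(y)=1$. If it had norm $1$, testing at each isolated point would force $h$ to vanish there, hence everywhere, a contradiction.

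So the claim that the isometric setting forces $\norm{P}=1$ is false, and your bound $\norm{x^*_s}\le\norm{e^*_s}$ silently uses $\norm{P}\le1$. Definition~\ref{def:ctcf} only asks $\rho[\Sigma]$ to be bounded, and one gets $\norm{S}\le\norm{\T}$. Norm-$1$ trees correspond to norm-$1$ projections (item (a) of Theorem~\ref{main characterization-zerodim}). Aiming for partial sums bounded by one and $\norm{Q(x)}\le\norm{x}$ would characterise $1$-complemented copies, not complemented ones.
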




The isomorphic version of Theorem \ref{thm:mainA} for \textit{all} metric compact spaces $L$, obtained with an aid of the classical classification results of Bessaga and Pe\l czy\'nski \cite{BessagaPelcynski_classification} and Miljutin \cite[Theorem 5.6]{Pelczynski_book_1968} and involving the Cantor--Bendixson height $ht(L)$ of $L$ (see Section \ref{sec:prelim}), reads as follows.

\begin{theoreml}\label{thm:mainB}
Let $E$ be a Banach space and $L$ be a metric compact space. Then, the following assertions are equivalent:

\begin{enumerate}[(i)]
    \item $\C(L)$ is isomorphic to a complemented subspace of $E$,
    \item $E$ admits a projectional tree of rank $ht(L)$ and order $1$.
\end{enumerate}
\end{theoreml}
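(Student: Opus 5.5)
We will reduce Theorem \ref{thm:mainB} to the isomorphic analogue of Theorem \ref{thm:mainA}, using the classical classification of separable $\C(L)$-spaces. The first step is to recall what the classification gives. If $L$ is uncountable, then by Miljutin's theorem \cite[Theorem 5.6]{Pelczynski_book_1968} $\C(L)$ is isomorphic to $\C(2^\omega)$. If $L$ is countable and infinite, write $ht(L)=\alpha+1$, so that the derived set $L^{(\alpha)}$ is finite and nonempty, say with $m$ points. Then $L$ is homeomorphic to $[1,\omega^\alpha m]$. By Bessaga--Pe\l czy\'nski \cite{BessagaPelcynski_classification}, $\C([1,\omega^\alpha m])$ is isomorphic to $\C([1,\omega^\alpha])$, since multiplying by a finite $m$ does not change the isomorphism class. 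Finite $L$ is a trivial case: $\C(L)$ is then finite-dimensional and we simply need to match it with the rank convention for finite heights.

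So in every case $\C(L)$ is isomorphic to $\C([1,\omega^\alpha])$ with $ht(L)=\alpha+1$, or to $\C(2^\omega)$ with $ht(L)=\infty$ (using the paper's convention that uncountable $L$ has height $\infty$). Because being isomorphic to a complemented subspace of $E$ is invariant under replacing $\C(L)$ by an isomorphic space, it suffices to show the following. For $\alpha<\omega_1$, $\C([1,\omega^\alpha])$ is isomorphic to a complemented subspace of $E$ if and only if $E$ admits a projectional tree of rank $\alpha+1$ and order $1$. The same must hold with $2^\omega$ and rank $\infty$.

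The second step is the isomorphic version of the argument for Theorem \ref{thm:mainA}. The non-isometric projectional tree of Definition \ref{def:ctcf} presumably has the same shape as the isometric one, but it replaces the exact norm and biorthogonality conditions by bounds with constants. Concretely, there would be a tree of pairs $(x_s,x_s^*)\in E\times E^*$ indexed by the tree of Definition \ref{def:tree}, with $\sup_s\|x_s\|<\infty$ and $\sup_s\|x^*_s\|<\infty$ (or a uniform bound on the relevant tree sums), together with biorthogonality relations. Each direction then goes as follows.

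(i)$\Rightarrow$(ii): Let $T\colon \C(L)\to E$ be an isomorphic embedding and $P$ a bounded projection onto $T(\C(L))$. Take the Markushevich basis $(f_s,\mu_s)$ of $A_\lambda\cong \C([1,\omega^\alpha])$ (Theorem \ref{A_lambda is C(omega^alpha)}), or of $A_\Lambda\cong\C(2^\omega)$ (Theorem \ref{A_Lambda is C(2^omega)}). Push it forward by setting $x_s=Tf_s$ and $x_s^*=(T^{-1}P)^*\mu_s$. Biorthogonality is preserved. The tree inequalities hold up to the constant $\|T\|\|T^{-1}\|\|P\|$.

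(ii)$\Rightarrow$(i): Define $S\colon A_\lambda\to E$ on the span of the basis by $S f_s=x_s$, and $R\colon E\to A_\lambda$ by $Rx=\sum_s x^*_s(x)f_s$, interpreted through the explicit description of $A_\lambda$. Then $RS=\id$, so $SR$ is a projection onto a subspace isomorphic to $A_\lambda$.

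The main obstacle is the boundedness of $S$ and $R$, that is, showing that the constants in the tree conditions really control $\|\sum a_sx_s\|$ by the sup-norm of $\sum a_sf_s$, and that $Rx$ actually lies in $A_\lambda$ (the continuity and limit conditions at the nodes) with $\|Rx\|\le C\|x\|$. I would attempt this by mimicking the isometric proof of Theorem \ref{thm:mainA} with explicit constants carried through. The approach is to express each function in $A_\lambda$ through its values along branches, and to use the tree conditions on finite subtrees followed by a density argument.
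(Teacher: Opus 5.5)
\textbf{There is a genuine gap in your second step.} Your reduction (Miljutin for uncountable $L$; Mazurkiewicz--Sierpi\'nski plus Bessaga--Pe\l czy\'nski for countable $L$) is exactly the paper's. The remaining step is Theorem \ref{main characterization-zerodim}, which the paper already proves in the isomorphic form, so the paper's proof is just these citations.

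Your reconstruction of that second step misses the key ingredient. A projectional tree (Definition \ref{def:ctcf}) is not a bounded biorthogonal tree of pairs. It is a triple $(\Sigma,T,\rho)$ in which $T\colon\A_\Lambda\to E$ is \emph{assumed} to be an isomorphic embedding, so the boundedness of your $S$ is not an obstacle at all. The map $\rho\colon\Lambda\to E^*$ is bounded, satisfies $T^*\circ\rho=\tilde\delta$, and is \emph{uniformly weak$^*$ continuous}.

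Without that continuity, your hypotheses do not suffice and (ii)$\Rightarrow$(i) fails. Take $E=\ell_\infty$ and the rank-$2$ tree $\Lambda=\{\emptyseq\}\cup\{\langle n\rangle\colon n\in\omega\}$. Set $T(\tilde\chi_\emptyseq)=(1,1,\ldots)$ and $T(\tilde\chi_{\langle n\rangle})=e_n$. Let $\rho(\langle n\rangle)$ be the $n$-th coordinate functional and $\rho(\emptyseq)$ the limit along a free ultrafilter $\mathcal U$. Everything you list holds: uniform bounds, (tree-)biorthogonality, and $T$ is even an isometric embedding. Yet $\C([1,\omega])\simeq c$ is not complemented in $\ell_\infty$. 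Concretely, your $Rx$ has coordinates $x_n-\lim_{\mathcal U}x$, which need not tend to $0$, so $Rx\notin\A_\Lambda$. In the paper, your $R$ is $(Rx)_s=\langle\rho(s)-\rho(pred(s)),x\rangle$ with $(Rx)_\emptyseq=\langle\rho(\emptyseq),x\rangle$. The telescoping identity $\sum_{s\preceq t}(Rx)_s=\langle\rho(t),x\rangle$ gives $\norm{Rx}^\Lambda\le\sup_t\norm{\rho(t)}\norm{x}$. Conditions (A.2) and (A.3) then come from uniform weak$^*$ continuity via Lemma \ref{lem: the properties of rho}.

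Dually, in (i)$\Rightarrow$(ii) you must check that $\rho(s)=(T^{-1}P)^*\tilde\delta_s$ is uniformly weak$^*$ continuous, and your sketch omits this. For countable rank it follows from Proposition \ref{prop:density}(iii) and the compactness of $\Lambda$. The full tree $\el$, however, is not compact. For rank $\infty$ the paper uses the canonical embedding of $\el$ into $2^\omega$ and Theorem \ref{A_Lambda is C(2^omega)} to write $\rho$ as the restriction of a weak$^*$ continuous map on the compact space $2^\omega$.

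\textbf{The finite case cannot be matched as you suggest.} If $L$ is finite with $|L|\ge2$, then $ht(L)=1$. Every nonzero $E$ admits a projectional tree of rank $1$ and order $1$: just $e\in E$ and $e^*\in E^*$ with $e^*(e)=1$. But $\C(L)\cong\er^{|L|}$ does not embed into $E=\er$. So the statement requires $L$ infinite; for finite $L$ the correct order is $|L|$, as in Theorem \ref{main characterization-zerodim} with $\alpha=0$. The paper's short proof has the same blind spot, since the Bessaga--Pe\l czy\'nski classification concerns infinite compacta.
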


For the proofs of the above two main results, see Theorems \ref{main characterization-zerodim} and \ref{main characterization}. Note that Theorems \ref{thm:mainA} and \ref{thm:mainB} should be particularly useful when dealing with the complementability of separable spaces $\C(L)$ in various classes of nonseparable Banach spaces $E$, since, in general, a construction of an appropriate projectional tree in the product $E\times E^*$ seems to be a much easier task than to construct a complemented copy of a separable space $\C(L)$ directly in $E$, as projectional trees are built by natural recursive processes (see also, e.g., Remarks \ref{rem:tree sufficient 2} and \ref{rem:tree sufficient 1}). Besides, in the realm of \textit{separable} Banach spaces the complementabiliy of spaces $\C(L)$ for metric $L$ is already quite well understood. First, each copy of $\C([1, \omega])\simeq c_0$ in a separable Banach space $E$ is complemented (Sobczyk \cite{Sobczyk_c0}) and, notably, $c_0$ is the only (up to isomorphism) separable space with such a property (Zippin \cite{Zippin1977_separable}). 
Concerning other separable spaces $\C(L)$, the following weaker fact holds in general: if a separable Banach space $E$ contains a copy of a separable space $\C(L)$, then $E$ contains a complemented (possibly different) copy of $\C(L)$ (Pe\l czy\'nski \cite{Pelczynski_separable}). 
On the other hand, the case of non-separable Banach spaces $E$ is less clear.

Let us also mention that Theorem \ref{thm:mainA} might be viewed as a natural extension of the characterization of the complementability of $c_0$ in Banach spaces obtained by Schlumprecht \cite{schlumprecht_phd_thesis} (see also \cite[Theorem 1.1.2]{cembranos2006banach}). Moreover, as explained at the beginning of Section \ref{sec:ctcf}, one of the main motivations for introducing and studying projectional trees in Banach spaces was the standard method of constructing complemented isometric copies of the Banach spaces $c_0$ and $\C([1,\omega])$ in spaces $\C(K)$ for $K$ containing nontrivial convergent sequences. As is commonly known, this is closely related to the notion of a Grothendieck space: recall that a Banach space $E$ is \emph{Grothendieck} if every weak$^*$ convergent sequence in the dual space $E^*$ is weakly convergent. For a general discussion and examples of Grothendieck Banach spaces, see  \cite{Diestel_Grothendieck} or \cite{Gonzalez_Kania}. The class of compact spaces $K$ whose Banach spaces $\C(K)$ are Grothendieck include all extremally disconnected compact spaces (Grothendieck \cite{Grothendieck}), or, more generally, all F-spaces (Seever \cite{Seever}); on the other hand, the space $c_0$ and all separable spaces $\C(K)$ are not Grothendieck. By the results of Cembranos \cite{Cembranos_complemented_c0} and Schachermayer \cite{Schachermayer_Grothendieck_C(K)}, for a compact space $K$, the space $\C(K)$ is Grothendieck if and only if $\C(K)$ does not contain complemented (isometric) copies of $c_0$ (cf. \cite[Chapter 19]{KKLPS}). In the proof of the ``only if'' part one constructs a sequence $(f_n,\mu_n)_{n\in\omega}$ of pairs in the product $\C(K)\times\C(K)^*$ which behaves in a somewhat biorthogonal way, similarly to our projectional trees. Thus, Theorems \ref{thm:mainA} and \ref{thm:mainB} may be in a certain sense considered as generalizations of Cebranos and Schachermayer's result and so, \textit{a fortiori}, the property of the nonexistence of projectional trees in Banach spaces generalizes the Grothendieck property.  

\medskip

The second main goal of our paper is to demonstrate the utility of the above notions and results in the case when $E$ is again some, generally nonseparable, space of continuous functions $\C(K)$. The study of complemented subspaces of $\C(K)$ spaces is currently an active field of research, with many important steps forward done in the recent years, see e.g.  \cite{Acuaviva_ComplementedC(K)Candido}, \cite{CORREA_compact_lines_sobczyk},\ \cite{JohnsonKaniaSchchtman2016complementedcountablesuport}, 
\cite{Kakol_Sobota_Zdomskyy_c0_in_C(K_times_L)}, \cite{koszmider2005decompositions}, \cite{koszmider2021banach}, \cite{koszmider2011complementation}, \cite{plebanek_rondos_sobota_products}, \cite{Plebanek_Alarcon_CSP}, and references therein. For classical results in the area, we refer the reader to papers \cite{benyamini1978extension}, \cite{Lindenstrauus_ell_infty}, \cite{peck1976lattice}, \cite{Pelczynski_projections}, \cite{Pelczynski_book_1968}, or \cite{RosenthalC(K)}. 
Using the tools developed in Sections \ref{sec:new_descriptions} and \ref{sec:ctcf}, we obtain in Section \ref{sec:apps_to_ck} two novel results on the complementability of (separable) spaces $\C(L)$ inside arbitrary spaces $\C(K)$. 
Our research might be also viewed as a natural continuation of our work \cite{Rondos-Sobota_copies_of_separable_C(L)} where, building on classical works of such authors as Gordon \cite{Gordon3}, Holszty\'{n}ski \cite{Holsztynski1966}, Pe\l czy\'nski and Semadeni \cite{Pelczynski_Semadeni_scattered}, or Rosenthal \cite{RosenthalC(K)}, we investigated when a separable space $\C(L)$ is embeddable (isometrically or isomorphically) into an arbitrary space $\C(K)$, in particular from the point of view of the topologies of $K$ and $L$. 
In the complemented case the situation has appeared to more complicated, and our results need to rely not only on the topologies of $K$ and $L$, but also on the properties of the spaces of Radon measures $\M(K)$ and $\M(L)$ (we recall that a purely topological characterization of the existence of a complemented copy of a separable space $\C(L)$ inside some space $\C(K)$ is unknown even for the simplest case of the space $\C(L)=\C([1, \omega])\simeq c_0$). 

Let us mention that a classical approach to locating a complemented copy of a given space $\C(L)$ inside another space $\C(K)$ is to find a continuous surjection of $K$ onto $L$ admitting a so-called \emph{regular averaging operator}, which automatically induces a positive norm-$1$ projection of $\C(K)$ onto $\C(L)$ (see \cite{Argyros-Arvanitakis_averaging_operators}, \cite{Pelczynski_book_1968}, \cite{RosenthalC(K)}). Our next main result shows that, in a certain sense, this approach is close to being equivalent to the existence of such a copy of $\C(L)$, with the only difference being that the continuous surjection in question need not be defined on the entire space $K$, but possibly only on some closed subset of $K$. 

\begin{theoreml}\label{thm:mainC}
Let $K$ be a compact space and $L$ be a compact metric space. Then, the following assertions are equivalent:
\begin{enumerate}[(1),itemsep=1mm]
    \item There exist a closed linear subspace $E$ of $\C(K)$, a surjective positive isometry $T\colon\C(L) \rightarrow E$, and a positive norm-$1$ projection $P\colon\C(K) \rightarrow E$.
    \item There exist a closed subset $F$ of $K$, a continuous surjection $\rho\colon F \rightarrow L$, and a continuous mapping $\phi\colon L \rightarrow \P(F)$ such that $\supp(\phi(x))\subseteq\rho^{-1}(x)$ for each $x\in L$.
\end{enumerate}
\end{theoreml}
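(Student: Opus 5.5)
We prove the two implications separately; (2)$\Rightarrow$(1) is an explicit construction, while (1)$\Rightarrow$(2) is a Holszty\'nski-type analysis of the adjoints.

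\emph{(2)$\Rightarrow$(1).} Since $K$ is compact Hausdorff, hence normal, and $L$ is metrizable, we want a positive norm-one extension from $F$ to $K$ that is linear on the separable subspace $\{g\circ\rho: g\in\C(L)\}$. A Dugundji-type extension (Borsuk--Dugundji) is not available for arbitrary $K$. We therefore build $T$ directly by partitions of unity. Fix a countable dense sequence $(g_n)$ in the unit ball of $\C(L)$. Using the Tietze theorem coordinatewise, extend the continuous map $\rho$, viewed into the Hilbert cube where $L$ embeds, to a continuous map $\tilde\rho\colon K\to[0,1]^\omega$. Then $U=\tilde\rho^{-1}(V)$ is an open neighbourhood of $F$, where $V\supseteq L$ is a neighbourhood admitting a retraction onto $L$ in the ANR sense up to convex combinations. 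Concretely, by Dugundji's theorem there is a positive norm-one linear extension operator $D\colon\C(L)\to\C([0,1]^\omega)$. Put $Tg=(Dg)\circ\tilde\rho$, which is positive, linear, of norm at most one, and satisfies $Tg|_F=g\circ\rho$. Since $\rho$ is onto, $\|Tg\|=\|g\|$, so $T$ is a positive isometry. Now define
\[
P'f(x)=\int_F f\,d\phi(x),\qquad x\in L .
\]
This lies in $\C(L)$ by weak$^*$ continuity of $\phi$, and $P'$ is positive of norm one. Since $\supp\phi(x)\subseteq\rho^{-1}(x)$, we get $P'(Tg)(x)=\int g\circ\rho\,d\phi(x)=g(x)$. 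Hence $P=TP'$ is a positive norm-one projection onto $E=T(\C(L))$.

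\emph{(1)$\Rightarrow$(2).} Consider $T^*\colon\M(K)\to\M(L)$ and $P'=T^{-1}P$, so that $P'T=\id$. For $x\in L$ put $\nu_x=P'^*\delta_x\in\P(K)$; this is a probability measure because $P'$ is positive and $P'(T1)=1$, using $T1=1$ on the relevant part. The map $\phi\colon x\mapsto\nu_x$ is weak$^*$ continuous. Set
\[
F=\overline{\bigcup_{x\in L}\supp\nu_x}.
\]
The key claim is that for $y\in\supp\nu_x$ one has $T^*\delta_y=\delta_x$. First, $\int Tg\,d\nu_x=g(x)$ for all $g$. Take $g\ge0$ with $g\le 1$ and $g(x)=1$. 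Then $0\le Tg\le 1$ and $\int Tg\,d\nu_x=1$, so $Tg=1$ on $\supp\nu_x$. Conversely, if $g(x)=0$ then $Tg=0$ on $\supp\nu_x$. Since $T^*\delta_y$ is a positive functional of norm at most one, these two facts force $T^*\delta_y=\delta_x$; here one must check that $T1\equiv1$ on $\supp\nu_x$, which follows from the first fact. The same relations pass to the closure by continuity, giving $T^*\delta_y\in\{\delta_x:x\in L\}$ for all $y\in F$. Define $\rho(y)$ by $T^*\delta_y=\delta_{\rho(y)}$. Then $\rho$ is continuous (weak$^*$ continuity of $T^*$ restricted to Dirac measures) and surjective, and $\supp\phi(x)\subseteq\rho^{-1}(x)$ by the claim.

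The main obstacle is the extension step in (2)$\Rightarrow$(1): obtaining a \emph{linear} positive isometric lift without assuming $F$ is a retract. The plan handles this via Tietze into the Hilbert cube combined with Dugundji's linear extension operator for metrizable $L$. A secondary delicate point in (1)$\Rightarrow$(2) is the closure argument, which shows that $T^*\delta_y$ is a Dirac measure for limit points $y$ of the supports.
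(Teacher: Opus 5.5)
Your proof is correct, and it takes a genuinely different and much shorter route than the paper.

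\textbf{(1)$\Rightarrow$(2).} The paper applies its Holszty\'nski-type Theorem~\ref{holsztynski}. That theorem is built for arbitrary, not necessarily positive, isometries and norm-one projections, so it must identify $T^{**}(\chi_{\{x\}})$ with a bounded Borel function via the maximum principle for affine first-Borel-class functions (Lemma~\ref{lem:bidual of C(K) and Borel functions on K}). You instead exploit positivity directly. The measure $\nu_x=(T^{-1}P)^*\delta_x$ is a probability measure with $\int Tg\,d\nu_x=g(x)$. For $0\le g\le 1$ with $g(x)=1$ this forces $Tg=1$ on $\supp\nu_x$, hence $T^*\delta_y=\delta_x$ there. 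Closedness of $\{y\colon T^*\delta_y\in\delta[L]\}$ then handles $F$. This half does not even use metrizability of $L$.

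\textbf{(2)$\Rightarrow$(1).} The paper runs through (2)$\Rightarrow$(3)$\Rightarrow$(4)$\Rightarrow$(5): a tree-indexed system of open sets and measures, then a PNPP projectional tree of norm $1$, then Theorem~\ref{main characterization-zerodim}, and finally Miljutin's lemma for general uncountable $L$. You obtain (1) at once. Compose Dugundji's regular linear extension operator $D\colon\C(L)\to\C([0,1]^\omega)$ with a Tietze extension $\tilde\rho$ of $\rho$ to get $Tg=(Dg)\circ\tilde\rho$ with $Tg\rstr F=g\circ\rho$. Then take $P=TP'$ with the averaging operator $P'f(x)=\int_F f\,d\phi(x)$.

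\textbf{Comparison.} Your route is uniform in $L$, but it rests on the Borsuk--Dugundji theorem, which the paper's argument does not use. In return, the paper's longer route yields the extra equivalent conditions (3)--(5) of Theorem~\ref{characterization-almost isometric}. Its (1)$\Rightarrow$(2) also comes from Theorem~\ref{thm:mainD}, which is valid without any positivity assumption and gives an explicit formula for $P$.

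\textbf{Points to tidy up.}
\begin{itemize}
\item You assert without proof that $P'=T^{-1}P$ is positive. This requires $T^{-1}$ to be positive on $E$, which is true but needs a line. If $Tg\ge0$ and $\min g=-a<0$, then for $\lambda\ge\norm{g}$ we have $\norm{\lambda\cdot1-g}=\lambda+a$. On the other hand $0\le T1\le1$ gives $-\lambda\le-Tg\le\lambda T1-Tg\le\lambda$, contradicting that $T$ is an isometry.
\item The normalization $\nu_x(K)=1$ should not appeal to ``$T1=1$ on the relevant part'', which is circular as written. Simply note $1=\langle\nu_x,T1\rangle\le\nu_x(K)=\norm{\nu_x}\le\norm{P'}\le1$, since $0\le T1\le1$.
\item In the ``conversely'' step you need $g\ge0$.
\item The dense sequence $(g_n)$ and the neighbourhood $U=\tilde\rho^{-1}(V)$ with an ``ANR retraction'' play no role and should be removed.
\end{itemize}
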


For the proof of the extended version of Theorem \ref{thm:mainC}, see Section \ref{sec:holsztynski_separable}. Let us here note that Theorem \ref{thm:mainC} generalizes the following folklore fact which also concerns the complementability of separable spaces $\C(L)$ inside general spaces $\C(K)$, cf. Corollary \ref{cor:generalized_folklore_fact}.

\begin{thmx*}{Folklore Fact}\label{folklore_fact}\ \\
\indent (1) If a compact space $K$ contains a homeomorphic copy of a compact metric space $L$, then the space $\C(K)$ contains a complemented isometric copy of the space $\C(L)$.

(2) If $M$ is an uncountable compact metric space, then the space $\C(M)$ contains a complemented isometric copy of the space $\C(L)$ for any compact metric space $L$.
%
\end{thmx*}
\begin{proof}[Sketch of the proof]
For (1), note that by the Borsuk--Dugundji theorem (cf. \cite[Theorem 6.6]{Pelczynski_book_1968}) every compact metric space is \emph{Dugundji}, therefore, by \cite[Corollary 2.4]{Pelczynski_book_1968}, $\C(L)$ can be embedded isometrically as a complemented subspace of $\C(K)$.

To see (2), notice that by (1) the space $\C(2^\omega)$ isometrically embeds onto a complemented subspace of $\C(M)$. Then, since every compact metric space is \emph{Miljutin} (see \cite[Theorem 5.6]{Pelczynski_book_1968}) and by \cite[Corollary 2.3]{Pelczynski_book_1968}, $\C(L)$ embeds isometrically onto a $1$-complemented subspace of $\C(2^\omega)$ and hence onto a complemented subspace of $\C(M)$.
%
\end{proof}


One of the key ingredients of the proof of Theorem \ref{thm:mainC} is our last main result, Theorem \ref{thm:mainD}, which is a variant of the classical theorem of Holszty\'{n}ski (\cite{Holsztynski1966}) for the case of complemented subspaces, and which is of an independent interest. Recall that Holszty\'nski's result says that if $T$ is an isometry of a space $\C(L)$ (not necessarily separable) into another space $\C(K)$, then there are a closed subset $F$ of $K$, a continuous surjection $\rho\colon F\to L$, and a continuous mapping $\sigma\colon F\to\{-1,1\}$, such that $T(f)(y)=\sigma(y)f(\rho(y))$ for every $f\in\C(L)$ and $y\in F$. We provide the following stronger conclusion for the complemented case, which, among other things, also gives a simple formula for the norm-$1$ projection of $\C(K)$ onto $\C(L)$, see Section \ref{sec:holsztynski}. Note that the result provides a bridge connecting Holszty\'nski's theorem with above-mentioned Pe\l czy\'nski's theory of averaging operators, see \cite[Section 4]{Pelczynski_book_1968}.

\begin{theoreml}\label{thm:mainD}
Let $K$ and $L$ be compact spaces. Let $T\colon\C(L) \rightarrow\C(K)$ be an isometry onto a complemented subspace $E$ of $\C(K)$ and $P\colon\C(K) \rightarrow E$ be a norm-$1$ projection onto $E$. Then, there exist a closed subset $F$ of $K$, a continuous surjection $\rho\colon F \rightarrow L$, and continuous mappings $\sigma\colon F \rightarrow\{-1,1\}$ and $\phi\colon L \rightarrow S_{\M(F)}$ such that:
\begin{enumerate}[(i)]
    \item for each $x \in L$ it holds
    \[\supp(\phi(x))\subseteq\rho^{-1}(x),\]
    \item for each $f \in \C(L)$ and $y \in F$ it holds
    \[T(f)(y)=\sigma(y) f(\rho(y)),\]
    \item for each $g \in \C(K)$ and $y \in F$ it holds
    \[P(g)(y)=\sigma(y) \big\langle \phi(\rho(y)), g \big\rangle.\]
\end{enumerate}
\end{theoreml}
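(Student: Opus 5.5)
We work with the adjoints $T^*\colon \M(K)\to\M(L)$ and $P^*\colon \M(K)\to \M(K)$, together with the map $S:=T^{-1}P\colon \C(K)\to\C(L)$. The map $S$ has norm $1$ and satisfies $ST=\id$. For $x\in L$ we put
\[
\phi_0(x):=S^*(\delta_x)=\delta_x\circ S\in \M(K).
\]
Then $\|\phi_0(x)\|\le 1$ and $\langle \phi_0(x),Tf\rangle=f(x)$ for all $f\in\C(L)$. Since $\|Tf\|=\|f\|$, testing with functions peaking at $x$ shows $\|\phi_0(x)\|=1$. The map $\phi_0$ is weak$^*$ continuous, because $x\mapsto \langle\delta_x,Sg\rangle=Sg(x)$ is continuous for every $g$.

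For the set $F$ we follow Holszty\'nski's argument. For $x\in L$, let $I_x$ be the set of points $y\in K$ for which $|Tf(y)|=1$ whenever $f$ is a norm-one function peaking at $x$, i.e.\ $f(x)=1=\|f\|$. A standard compactness argument shows that each $I_x$ is nonempty. A Urysohn-type argument shows that the sets $I_x$ are pairwise disjoint. On each such point $y$ one gets $Tf(y)=\sigma(y)f(x)$. We then set $F=\overline{\bigcup_x I_x}$ (or the appropriately defined closed set in Holszty\'nski's proof) and obtain $\rho$ and $\sigma$ giving (ii). We take this part essentially from the classical theorem.

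The new step is to show that $\phi_0(x)$ is concentrated on $\rho^{-1}(x)$, modulo the sign $\sigma$. Decompose $\phi_0(x)=\mu$ with $\|\mu\|=1$. Since $\langle\mu,Tf\rangle=f(x)=1$ for a peak function $f$ at $x$ with $\|Tf\|=1$, the measure $\mu$ must be carried by the set $\{y: Tf(y)\,\mathrm{sgn}\,\mu=1\}$. Hence $\mu$ is supported on the points where $|Tf|=1$. Intersecting over all peak functions at $x$ shows that $\supp\mu\subseteq I_x$. Moreover, on $\supp\mu$ the density of $\mu$ has sign $\sigma(y)$. The Holszty\'nski construction also needs to be arranged so that $I_x\subseteq F$ and $\rho^{-1}(x)=I_x\cap F$.

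We then define $\phi(x):=\sigma\cdot\phi_0(x)$ restricted to $F$, or keep the signed measure, depending on the normalization in (iii). Then $\phi(x)\in S_{\M(F)}$ and (i) holds. Weak$^*$ continuity of $\phi$ follows from that of $\phi_0$, since $\sigma$ is continuous on $F$ and $\phi$ agrees with $\phi_0$ up to multiplication by the continuous function $\sigma$. For (iii), take $y\in F$ and $x=\rho(y)$. Then
\[
P(g)(y)=T(Sg)(y)=\sigma(y)\,Sg(x)=\sigma(y)\langle\phi_0(x),g\rangle,
\]
which is (iii) after accounting for the sign convention.

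The main obstacle is the support claim, specifically showing $\supp\phi_0(x)\subseteq\rho^{-1}(x)$ rather than merely $\subseteq I_x$. Two things need care. First, the sets $I_x$ for different $x$ must be disjoint and compatible with the closed set $F$. Second, the norming argument must pass to closed supports, via a regularity argument using countably many peak functions for metrizable neighbourhoods, or using nets in the general case.
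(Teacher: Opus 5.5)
Your proof is correct. For the key item (i) it takes a genuinely different and considerably more elementary route than the paper.

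The objects coincide. The paper's $\phi(x)=P^*\big((T^*)^{-1}(\delta_x)\big)$ is your $\phi_0(x)=S^*\delta_x$. The two $F$'s also coincide, since $y\in I_x$ if and only if $T^*\delta_y=\pm\delta_x$, which is the paper's defining condition.

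The difference lies in what $\phi(x)$ is tested against.
\begin{itemize}
\item The paper tests it against the single bidual element $T^{**}(\chi_{\{x\}})$. To use $\langle T^{**}(\chi_{\{x\}}),\phi(x)\rangle=1$, it must represent $T^{**}(\chi_{\{x\}})$ by a bounded Borel function $h_x$ on $K$. This is Lemma~\ref{lem:bidual of C(K) and Borel functions on K}, which rests on first-Borel-class functions and the Dost\'al--Spurn\'y maximum principle. The paper then shows $\abs{\phi(x)}(\{\abs{h_x}=1\})=1$, where $\{\abs{h_x}=1\}=\rho^{-1}(x)$.
\item You test $\phi_0(x)$ against all the continuous functions $Tf$ with $f(x)=1=\norm{f}$. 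Each equality $\langle\phi_0(x),Tf\rangle=1=\norm{Tf}$ forces the open set $\{\abs{Tf}<1\}$ to be $\abs{\phi_0(x)}$-null. Hence $\supp\phi_0(x)\subseteq\{\abs{Tf}=1\}$, and intersecting these closed sets gives $\supp\phi_0(x)\subseteq I_x$.
\end{itemize}

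Your route avoids the bidual and all descriptive set theory. It also gives more: taking $f\equiv 1$ shows $\phi_0(x)=\sigma\abs{\phi_0(x)}$. So if $T$ is positive, then $\phi_0(x)\in\P(F)$ and $P=TS$ is positive automatically. This makes the positivity of $P$ assumed in Theorem~\ref{holsztynski}(b) redundant. The paper's route additionally identifies $T^{**}(\chi_{\{x\}})$ with a Borel function, but the theorem does not need this.

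Some of your hedges should be resolved, though none is a gap.
\begin{itemize}
\item Take $F=\bigcup_{x}I_x$ itself, not its closure. It is already closed: if $y_\lambda\to y$ and $\rho(y_\lambda)\to x$, then $\abs{Tf(y)}=\abs{f(x)}$ for all $f$, so $y\in I_x$.
\item You need this maximal set. An arbitrary $F$ supplied by the statement of Holszty\'nski's theorem is only contained in $\bigcup_x I_x$, and then $\rho^{-1}(x)=I_x\cap F$ need not contain $\supp\phi_0(x)$.
\item With this choice, disjointness of the $I_x$ gives $\rho^{-1}(x)=I_x$ directly, so your first ``obstacle'' disappears.
\item The second obstacle disappears too. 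No countability or nets are needed, because you intersect closed sets each of which already contains $\supp\phi_0(x)$.
\item Put $\phi=\phi_0$, not $\sigma\phi_0$. Then your computation $P(g)(y)=T(Sg)(y)=\sigma(y)\langle\phi_0(\rho(y)),g\rangle$ is literally (iii), and continuity of $\phi$ is immediate.
\item Surjectivity of $\rho$ follows from $\emptyset\neq\supp\phi_0(x)\subseteq\rho^{-1}(x)$.
\end{itemize}
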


It is quite surprising that even though numerous variants and generalizations of the Holszty\'{n}ski theorem exist in the literature (see, e.g., \cite{Araujo-Font_isometries}, \cite{Botelho_Jamison_complex_Cambern-Holsztynski}, \cite{Chu-Michael_isometries}, \cite{Wolfr-Gebeily_isometries}, \cite{Galego-Villamizar_continuous_maps}, \cite{Galindo-Palacios_isometries}), up to our best knowledge none of them seems to deal with the quite natural assumption that the space $\C(L)$ is isometrically embedded onto a complemented subspace in the space $\C(K)$. This might possibly be caused by the fact that our proof requires, as a key ingredient, the usage of the \emph{maximum principle} for affine functions of the first Borel class, which has been discovered only recently, see Lemma \ref{lem:bidual of C(K) and Borel functions on K} and paper \cite{dosp}. The only result found by us, which is somewhat related to Theorem \ref{thm:mainD}, 
is \cite[Theorem 1]{FriedmanRusso1982} characterizing norm-$1$ projections on Banach spaces of the form $\C_0(X)$ using certain families of measures that resemble our mapping $\phi$. 

\medskip

To finish the introduction, let us briefly recap the plan of the paper. In the next preliminary section we recall some standard notation as well as introduce basic notions used throughout the paper. In Section \ref{sec:new_descriptions} we provide isometric descriptions of the spaces $\C([1,\omega^\alpha])$ and $\C(2^\omega)$ in terms of spaces of real-valued functions on trees. Section \ref{sec:ctcf} contains characterizations of the existence of complemented isometric or isomorphic copies of separable spaces $\C(L)$ in arbitrary Banach spaces $E$ in terms the presence of certain trees in $E \times E^*$. In Section \ref{sec:apps_to_ck} we generalize Holszty\'nski's classical result to the case of isometric embeddings onto complemented subspaces and, building on this result as well as on the results from previous sections, we give a characterization of when a separable space $C(L)$ is positively isometric to a complemented subspace of $\C(K)$ via a positive norm-$1$ projection.

\section{Preliminaries\label{sec:prelim}}

In this section we provide basic notation and terminology used throughout the paper.

\subsection{Real numbers}

For a sequence $(a_k)_{k=0}^\infty$ of real numbers we use the convention that $\sum_{k=1}^0a_k=0$. Further, as usual, for a real number $a\in\er$ we set $a^+=\max\{a,0\}$ and $a^-=\max\{-a,0\}$. We will need the following simple lemma.

\begin{lemma}\label{lem:sup_max_0}
For any sequence $(a_n)_{n=0}^\infty$ of real numbers we have
\[\sup_{n\ge0}\bigg(\sum_{k=0}^n a_k\bigg)^+=\left(a_0+\sup_{n\ge1}\bigg(\sum_{k=1}^n a_k\bigg)^+\right)^+.\]
\end{lemma}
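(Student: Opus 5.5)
Let $S_n=\sum_{k=1}^n a_k$ for $n\ge1$, with $S_0=0$ by the convention $\sum_{k=1}^0a_k=0$. Then $\sum_{k=0}^n a_k=a_0+S_n$ for every $n\ge0$, so the left-hand side equals $\sup_{n\ge0}(a_0+S_n)^+$. The plan is to rewrite both sides as expressions in $s:=\sup_{n\ge0}S_n\in(-\infty,\infty]$ and compare them.

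First, the map $t\mapsto t^+$ is nondecreasing and continuous on $\er$; extend it by $\infty^+=\infty$. Hence it commutes with suprema:
\[\sup_{n\ge0}(a_0+S_n)^+=\Big(\sup_{n\ge0}(a_0+S_n)\Big)^+=(a_0+s)^+.\]
This also covers the case $s=\infty$, where both expressions are $\infty$.

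Second, because $S_0=0$, we have $s=\sup_{n\ge0}S_n=\max\{0,\sup_{n\ge1}S_n\}=\big(\sup_{n\ge1}S_n\big)^+$. Applying the same monotonicity and continuity argument to $\sup_{n\ge1}$ gives $\big(\sup_{n\ge1}S_n\big)^+=\sup_{n\ge1}S_n^+$. Substituting into the first step yields
\[\sup_{n\ge0}\bigg(\sum_{k=0}^na_k\bigg)^+=\Big(a_0+\sup_{n\ge1}\Big(\sum_{k=1}^na_k\Big)^+\Big)^+,\]
which is the claimed identity.

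There is no real obstacle. The only point needing care is that $t\mapsto t^+$ commutes with suprema, including unbounded ones; this follows because it is nondecreasing and continuous, with the convention $\infty^+=\infty$.
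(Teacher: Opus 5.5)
Your proof is correct and follows essentially the same route as the paper: write $\sum_{k=0}^n a_k=a_0+S_n$ with $S_0=0$, pull $\max\{\cdot,0\}$ and the translation by $a_0$ through the supremum, split off the $n=0$ term, and push $\max\{\cdot,0\}$ back inside $\sup_{n\ge1}$. Your explicit treatment of the unbounded case via $\infty^+=\infty$ matches what the paper's chain of equalities uses implicitly.
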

\begin{proof}
    Let $(a_n)_{n=0}^\infty$ be a sequence of real numbers. We have
    \[\sup_{n\ge0}\bigg(\sum_{k=0}^n a_k\bigg)^+=\sup_{n\ge0}\max\bigg\{\sum_{k=0}^n a_k,\ 0\bigg\}=\sup_{n\ge0}\max\bigg\{a_0+\sum_{k=1}^n a_k,\ 0\bigg\}=\]
    \[=\max\bigg\{a_0+\sup_{n\ge0}\sum_{k=1}^n a_k,\ 0\bigg\}=\max\left\{a_0+\max\bigg\{0,\ \sup_{n\ge1}\sum_{k=1}^n a_k\bigg\},\ 0\right\}=\]
    \[=\max\left\{a_0+\sup_{n\ge1}\max\bigg\{0,\ \sum_{k=1}^n a_k\bigg\},\ 0\right\}=\max\left\{a_0+\sup_{n\ge1}\bigg(\sum_{k=1}^n a_k\bigg)^+,\ 0\right\}=\]
    \[=\left(a_0+\sup_{n\ge1}\bigg(\sum_{k=1}^n a_k\bigg)^+\right)^+.\]
\end{proof}

\subsection{Sets and ordinal numbers}

The cardinality of a set $X$ is denoted by $\abs{X}$. By $\en$ we denote the set of all \emph{positive} natural numbers, i.e. $\en=\{1,2,3,\ldots\}$. By $\omega$ we denote the first infinite ordinal number; as usual, we identify $\omega$ with the set of \emph{all} natural numbers, i.e. $\omega=\{0,1,2,\ldots\}=\{0\}\cup\en$. By $\omega_1$ we denote the first uncountable ordinal number. 

For a set $X$ by $\id_X\colon X\to X$ we denote the identity mapping on $X$, and for its subset $A$ by $\chi_A\colon X\to\{0,1\}$ we denote the characteristic function of $A$.

\subsection{Trees on $\omega$} 

We will frequently exploit trees on countable sets, for which we will use mostly standard notation and terminology, following e.g. Kechris \cite{kechris}.

Let $A$ be a set. By $A^{<\omega}$ we denote the set of all \emph{finite} sequences of elements of $A$, i.e., $A^{<\omega}=\bigcup_{n\in\omega}A^n$.  The only element of $A^0$, i.e., the \emph{empty} sequence, is denoted by $\emptyseq$. For $A=\omega$ we also write $\el=\omega^{<\omega}$. 

Let $s,t\in A^{<\omega}$ and $u\in A^\omega$. The symbol $l(s)$ denotes the \emph{length} of $s$, i.e. the unique number $n\in\omega$ such that $s\in A^n$; note that $s=\big\langle s(0),s(1),\ldots,s(l(s)-1)\big\rangle$. We write $s\preceq t$ if $s(k)=t(k)$ for any $k=0,\ldots,l(s)-1$, and $s\prec t$ if $s\preceq t$ and $l(s)<l(t)$. We also write $s \perp t$ if $s$ and $t$ are \emph{incomparable}, i.e.,  if none of the relations $s \preceq t$ and $t \preceq s$ holds. Similarly, $s\prec u$ if $s(k)=u(k)$ for any $k=0,\ldots,l(s)-1$. We set $s\concat t=\big\langle s(0),\ldots,s(l(s)-1),t(0),\ldots,t(l(t)-1)\big\rangle$. 
For $a\in A$, we simply write $s\concat a$ for $s\concat\langle a\rangle$ and $a\concat s$ for $\langle a\rangle\concat s$. If $l(s)>0$, then we set $sh(s)=\big\langle s(1),\ldots,s(l(s)-1)\big\rangle$ (\emph{shift} of $s$), i.e. $sh(s)$ removes the first element $s(0)$ of $s$, and $pred(s)=\big\langle s(0),\ldots,s(l(s)-2)\big\rangle$ (\emph{predecessor} of $s$), i.e. $pred(s)$ removes the last element $s(l(s)-1)$ of $s$; in particular, for any $a\in A$ we have $sh(\langle a\rangle)=\emptyseq=pred(\langle a\rangle)$.

\medskip

Let $\Lambda$ be a subset of $\el$. We say that $\Lambda$ is \emph{hereditary} if, for any $s,t\in\el$, whenever $t\in\Lambda$ and $s\preceq t$, then $s\in\Lambda$. An element $s\in\Lambda$ is called a \emph{leaf} of (hereditary) $\Lambda$ if there is no $t\in\Lambda$ such that $s\prec t$. The symbol $NL(\Lambda)$ stands for the set of all elements of $\Lambda$ which are \emph{not} leaves; note that $NL(\el)=\el$.

Let $\Omega\subseteq\Lambda$. For every $s\in\Omega$ we denote
\[\Omega_s=\{t\in\Omega\colon\ s\preceq t\}\]
and
\[\Omega^s=\{t\in\omega^{<\omega}\colon\ s\concat t\in\Omega\}.\]
Note that $\Omega^{\langle n\rangle}=sh[\Omega_{\langle n\rangle}]$ for every $n\in\omega$. 
Also, the \emph{length} of $\Omega$ is defined as
\[l(\Omega)=\sup \{l(s)\colon\ s \in \Omega\}.\]

Assume that $\Lambda$ is hereditary and $\emptyseq\in\Lambda$. If $\Lambda$ does not have \emph{infinite branches}, i.e. there is no sequence $(s_n)_{n\in\omega}$ in $\Lambda$ such that $s_n\prec s_{n+1}$ for every $n\in\omega$, then for an element $s\in\Lambda$ its \emph{rank} $r(s)$ is defined recursively as follows: 
\[
r(s)=\begin{cases}
    0,&\text{ if }s\text{ is a leaf in }\Lambda,\\
    r(s)=\sup_{s \prec t} (r(t)+1),&\text{ otherwise}.
\end{cases}
\]

\begin{definition}\label{def:tree}
Let $\Lambda\subseteq\el$. Let $\alpha$ be a countable ordinal. We say that $\Lambda$ is a \emph{tree of rank} $\alpha+1$ if:
\begin{itemize}
    \item $\Lambda$ is hereditary and $\emptyseq\in\Lambda$,
    \item $\Lambda$ does not have infinite branches,
    \item $r(\emptyseq)=\alpha$,
    \item for each $s \in \Lambda$, either $s$ is a leaf, or $s\concat n \in \Lambda$ for all $n \in \omega$,
    \item for each $s \in NL(\Lambda)$, the sequence of ordinals $(r(s\concat n))_{n \in \omega}$ is either constant or strictly increasing.
\end{itemize}

If $\Lambda$ is \emph{full}, i.e. $\Lambda=\omega^{<\omega}=\el$, then we say that $\Lambda$ is a \emph{tree of rank} $\infty$; we use the convention that $\alpha<\infty$ for each ordinal $\alpha$.

Finally, in general, by a \emph{tree} we mean a tree of rank $\beta$, where $\beta$ is either a countable successor ordinal or $\infty$. The set of all trees of rank $\beta$ will be denoted by $\TT_\beta$ and the set of all trees by $\TT$.\eodef
\end{definition}

Note that there is only one tree $\Upsilon$ of rank $1$, namely $\Upsilon=\{\emptyseq\}$, and so $\TT_1=\{\Upsilon\}$.

\begin{definition}
    Let $\Lambda\in\TT$. A finite subset $F\subseteq\Lambda$ is called a \emph{trunk} of $\Lambda$ if $F$ is hereditary and $\emptyseq\in F$. The set of all trunks of $\Lambda$ is denoted by $\mathfrak{Tr}(\Lambda)$.\eodef
\end{definition}

For every $\Lambda\in\TT$, the pair $(\mathfrak{Tr}(\Lambda),\subseteq)$ is a directed set and it holds $\Lambda=\bigcup \mathfrak{Tr}(\Lambda)$. We also have $\mathfrak{Tr}(\Upsilon)=\{\Upsilon\}$.

\subsection{Topological spaces}

All spaces considered in this paper are assumed to be \emph{Hausdorff}.

For ordinal numbers $\alpha\le\beta$ by $[\alpha,\beta]$ we denote the space of all ordinal numbers $\gamma$ such that $\alpha\le\gamma\le\beta$, endowed with the order topology.

We will use the notion of the Cantor--Bendixson derivatives of topological spaces. Let $X$ be a topological space. Set $X^{(0)}=X$ and  let $X^{(1)}$ be the set of all accumulation points of $X$ or, equivalently, $X^{(1)}=X\setminus\{x\in X\colon x\text{ is isolated in }X\}$. Further, for an ordinal number $\alpha>1$, let $X^{(\alpha)}=(X^{(\beta)})^{(1)}$ if $\alpha=\beta+1$, and $X^{(\alpha)}=\bigcap_{\beta<\alpha} X^{(\beta)}$ if $\alpha$ is a limit ordinal. For each ordinal $\alpha$ the set $X^{(\alpha)}$ is called the \emph{Cantor--Bendixson derivative of $X$ of order $\alpha$}. 

For $X$ there exists an ordinal $\alpha$ such that for each $\beta>\alpha$ it holds $X^{(\beta)}=X^{(\alpha)}$. For such an ordinal $\alpha$, we put $X^{(\infty)}=X^{(\alpha)}$; the set $X^{(\infty)}$ is called the \emph{perfect kernel} of $X$. $X$ is said to be \emph{scattered} if $X^{(\infty)}=\emptyset$---in this case the minimal ordinal $\alpha$ such that $X^{(\alpha)}=\emptyset$ is called the \emph{height} of $X$ and is denoted by $ht(X)$. If $X$ is not scattered, then let $ht(X)=\infty$; we again use the convention that $\alpha<\infty$ for each ordinal $\alpha$. It is easy to see that if $X$ is a scattered compact space, then $ht(X)$ is a successor ordinal and $X^{(ht(X)-1)}$ is a finite set. Note  that a compact metric space is scattered if and only if it is countable. We also notice that, for each ordinal number $\alpha$ and the ordinal interval $[1,\omega^{\alpha}]$, we have $([1, \omega^{\alpha}])^{(\alpha)}=\{\omega^{\alpha}\}$, which can be easily proved by transfinite induction, and thus $ht([1, \omega^{\alpha}])=\alpha+1$. Recall also the classical Mazurkiewicz--Sierpi\'{n}ski classification of countable compact spaces: each countable compact space $K$ is homeomorphic to the ordinal interval $[1, \omega^{\alpha}m]$, where $\alpha=ht(K)-1$ and $m=\abs{K^{(\alpha)}}$ (see \cite[Theorem 8.6.10]{semadeni}).

\medskip

By $2^\omega$ we denote the Cantor space (endowed with the product topology). For a sequence $s \in 2^{<\omega}$ let $[s]$ stand for the clopen subset of $2^{\omega}$ defined as
\[[s]=\{u \in 2^{\omega}\colon s \prec u\}.\]
Note that the family $\{[s]\colon s\in 2^{<\omega}\}$ is a base of the topology of $2^\omega$. Also, we endow $2^\omega$ with the (canonical) metric $d_{2^\omega}$, defined for every $x,y\in 2^\omega$ as follows:
\[d_{2^\omega}(x,y)=\begin{cases}2^{-k},&\text{ if }x\neq y\text{ and }k=\min\big\{n\in\omega\colon\ x(n)\neq y(n)\big\},\\0,&\text{ otherwise}.\end{cases}.\]

Below we define an auxiliary embedding $R$ of the full tree $\el=\omega^{<\omega}$ into the Cantor space $2^\omega$, which will enable us to define topologies on trees.

\begin{definition}    
The function $Q \colon \omega^{<\omega} \rightarrow 2^{<{\omega}}$ is defined as follows: let $Q(\emptyseq)=\emptyseq$, and for a nonempty sequence $s=\langle s_1, \ldots, s_n \rangle \in \omega^{<\omega}$ let
\smallskip
\[Q(s)=\langle\ \overbrace{1, \ldots, 1}^{s_1\text{ times}},\ 0,\ \overbrace{1, \ldots, 1}^{s_2\text{ times}},\ 0,\ \ldots,\ 0,\ \overbrace{1, \ldots, 1}^{s_n\text{ times}},\ 0\ \rangle.\]

The function $R \colon \omega^{<\omega} \rightarrow 2^{\omega}$ is defined for every $s\in\omega^{<\omega}$ and $n\in\omega$ as follows: 
\[R(s)(n)=\begin{cases}
    Q(s)(n)&,\text{ if }n<l(Q(s)),\\
    1,&,\text{ if }n\ge l(Q(s)).
\end{cases}\]\eodef
\end{definition}

It is immediate that for every incomparable $s,t\in\el\setminus\{\emptyseq\}$ we have $[Q(s)]\cap[Q(t)]=\emptyset$. Note also that the range of $R$ is a dense subset of $2^{\omega}$. Moreover, as $R$ is injective, we can use it to transport the topology of the image $R[\el]$, inherited from $2^\omega$, back onto $\el$, making $R$ a homeomorphism.

The topology on $\el$ has the following natural property:

\begin{lemma}\label{lemma:tree_topology}
   Let $(t_k)_{k\in\omega}$ be a sequence in $\el$ and let $s\in\el$. We have $\lim_{k\to\infty}t_k=s$ if and only if there are $k_0\in\omega$ and a sequence $(n_k)_{k\ge k_0}$ in $\omega$ such that $\lim_{k\to\infty}n_k=+\infty$ and for every $k\ge k_0$ we have $s\concat n_k\preceq t_k$. In particular, if $s\concat k\preceq t_k$ for every $k\in\omega$, then $\lim_{k\to\infty}t_k=s$.
\end{lemma}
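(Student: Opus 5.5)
The plan is to work directly with the Cantor metric $d_{2^\omega}$, since by definition $t_k\to s$ in $\el$ means $R(t_k)\to R(s)$ in $2^\omega$. The basic fact to isolate first is the following description of the neighbourhoods of $R(s)$. Put $q=Q(s)$, so $l(q)$ equals $l(s)$ plus the sum of the entries of $s$. Then $R(s)=q\concat\langle1,1,1,\ldots\rangle$. Hence the sets $[q\concat 1^j]$, $j\in\omega$, where $1^j$ denotes $j$ ones, form a neighbourhood base of $R(s)$.

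For the sufficiency direction, suppose $s\concat n_k\preceq t_k$ for $k\ge k_0$ with $n_k\to\infty$. Write $t_k=s\concat n_k\concat r_k$. By the definition of $Q$ we get
\[Q(t_k)=q\concat 1^{n_k}\concat 0\concat Q(r_k).\]
So $R(t_k)$ agrees with $R(s)$ on the first $l(q)+n_k$ coordinates, which gives $d_{2^\omega}(R(t_k),R(s))\le 2^{-(l(q)+n_k)}\to0$. The ``in particular'' clause is the special case $n_k=k$ and $k_0=0$.

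For the necessity direction, fix $j\ge1$ and take $k$ large enough that $R(t_k)\in[q\concat 1^j]$. I will show that then $s\concat n\preceq t_k$ for some $n\ge j$, except in the case $t_k=s$. The idea is to decode $t_k$ from $R(t_k)$. Since $Q$ is a concatenation of blocks $1^{a}0$ and $R(t_k)$ is $Q(t_k)$ followed by infinitely many ones, the entries of $t_k$ are read off as the lengths of the runs of ones between consecutive zeros, up to the last zero. Now $R(t_k)$ begins with $q$, and $q$ ends with a zero when $s\neq\emptyseq$. Therefore the first $l(s)$ blocks of $R(t_k)$ coincide with those of $q$, and so $s\preceq t_k$.

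Two cases remain.
\begin{itemize}
\item If $t_k=s$, this term causes no problem, but it does not satisfy the required form.
\item Otherwise $t_k=s\concat n\concat r$. The next block after $q$ is $1^n0$, and since $R(t_k)$ continues with $1^j$, we get $n\ge j$.
\end{itemize}

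This step is the main obstacle: the statement fails as written when $t_k=s$ for infinitely many $k$. A constant sequence converges to $s$, but $s\concat n_k\preceq s$ is impossible. So in the write-up I will either prove the statement for sequences with $t_k\neq s$ for all but finitely many $k$, or state explicitly that terms equal to $s$ are allowed. With either convention, for $k\ge k_0$ I set $n_k$ to be the entry of $t_k$ at position $l(s)$. The bound $n_k\ge j$ for all large $k$, valid for each $j$, then gives $n_k\to\infty$.
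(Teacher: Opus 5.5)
The paper states this lemma without proof (it presents it as a natural property of the topology transported from $2^\omega$ via $R$), so there is no argument of the authors to compare with. Your direct verification is correct and complete in both directions:
\begin{itemize}
\item Sufficiency follows, as you say, from $Q(u\concat v)=Q(u)\concat Q(v)$. This gives $d_{2^\omega}(R(t_k),R(s))=2^{-(l(Q(s))+n_k)}$.
\item In the necessity direction your decoding step is sound. Every zero of $R(t_k)$ lies inside $Q(t_k)$, so the $l(s)$ zeros of the prefix $Q(s)$ force $s\preceq t_k$. The run $1^j$ following $Q(s)$ then forces $t_k(l(s))\ge j$ whenever $t_k\neq s$.
\end{itemize}

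Your caveat is also correct and worth keeping. As literally stated, the ``only if'' direction fails for any sequence with $t_k=s$ for infinitely many $k$, for example the constant sequence, since $s\concat n\preceq s$ never holds. The accurate formulation is as follows: $t_k\to s$ if and only if there is $k_0$ such that for every $k\ge k_0$ either $t_k=s$ or $s\concat n_k\preceq t_k$, with $n_k\to\infty$ along the indices $k$ for which $t_k\neq s$ (when there are infinitely many of them). In your write-up, say explicitly that $n_k$ is then defined only on that set of indices. The sufficiency direction, which is the one invoked for instance in the proof of property (L.1), is unaffected by this imprecision.
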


For a countable successor ordinal $\alpha$, we endow each tree $\Lambda\in\TT_\alpha$ with the topology inherited from $\el$. An easy inductive argument, exploiting the Mazurkiewicz--Sierpi\'{n}ski theorem, shows that $\Lambda$ and $R[\Lambda]$ are actually homeomorphic to $[1,\omega^{\alpha-1}]$. 
A canonical homeomorphism of these spaces which posseses some additional properties will be constructed in Theorem \ref{A_lambda is C(omega^alpha)} below. 

\subsection{Banach spaces and Banach lattices}

All mappings (\emph{operators}) between vector spaces considered in this paper are assumed to be \emph{linear}. Let $(E,\norm{\cdot}_E)$ and $(F,\norm{\cdot}_F)$ be normed spaces. We write $E\simeq F$ (resp. $E\cong F$) if $E$ and $F$ are isomorphic (resp. isometrically isomorphic). 
Note that if we say that an operator $T\colon E\to F$ is an isometry, then we do not automatically assume that $T$ is surjective. If $E,F$ are Banach spaces and $T\colon E\to F$ is an isomorphism (resp. isometry) onto the (closed) subspace $T[E]$ of $F$, then we call $T$ an \emph{isomorphic embedding} (resp. \emph{isometric embedding}) of $E$ into $F$.

$S_E$ and $B_E$ denote the unit sphere and the unit ball of $E$, respectively. By $E^*$ we denote the dual space of $E$. The weak* topology of $E^*$ is denoted by $w^*$. A collection $(e_i,e_i^*)_{i\in I}$ of elements of the product $E\times E^*$ is called \emph{biorthogonal} if $e_i^*(e_i)=1$ and $e_i^*(e_j)=0$ for $i\neq j\in I$.

If $E$ is a normed lattice, its cone of \emph{positive} elements is denoted by $E_+$. The dual space $E^*$ is of course a Banach lattice, with the positive cone defined as follows: for every $\e\in E^*$, $\e \in E^*_+$ if and only if $\e(e) \geq 0$ for each $e \in E_+$. For a closed linear subspace $G$ of $E$ we set $G_{+}=G \cap E_+$. 
If $F$ is also a normed lattice, then an operator $T\colon G\to F$ is \emph{positive} if $T[G_+]\subseteq F_+$. For an element $e\in E$, we denote by $e^+$ and $e^-$ its \emph{positive} and \emph{negative} part, respectively, i.e. $e^+=e\vee 0$ and $e^-=(-e)\vee0$, and set $|e|=e^+\vee e^-$. 

The sum space $E\oplus F$ is by default endowed with the supremum norm, i.e. $\norm{e+f}_{E\oplus F}=\max\{\norm{e}_E,\norm{f}_F\}$ for any $e\in E, f\in F$. If $E,F$ are normed lattices, then $E\oplus F$ is also a normed lattice when endowed with the coordinate-wise order. 

If $E$ is a Banach space, then a closed linear subspace $G$ of $E$ is \emph{complemented} in $E$ if there is a closed linear subspace $H$ of $E$ such that $E=G+H$ and $G\cap H=\{0\}$, equivalently, if there exists a \emph{projection} from $E$ onto $G$, i.e. a bounded operator $P \colon E \rightarrow G$ such that $P\rstr{G}$ is the identity $\id_G$ on $G$; if $\norm{P}=1$, we say that $G$ is $1$-\emph{complemented} in $E$. If $E$ is a Banach lattice and $P$ is a positive operator, then we say that $G$ is \emph{positively complemented}. 

Let $\Lambda$ be a tree, i.e. $\Lambda\in\TT$. If $(e_s)_{s \in \Lambda}$ is a system of elements of $E$, then the sum $\sum_{s \in \Lambda} e_s$ stands for the unique element $e \in E$ which satisfies 
\[\inf_{F \in \mathfrak{Tr}(\Lambda)}\ \sup_{\substack{F' \in \mathfrak{Tr}(\Lambda)\\F\subseteq F'}}\ \norm{e-\sum_{s \in F'} e_s}_E=0,\]
if it exists.

We will usually omit the index in the norm and simply write $\|\cdot\|$ instead of $\|\cdot\|_E$.

\subsection{Spaces $\C(K)$, $\M(K)$, and $\P(K)$}

Let $K$ be a compact space. By $\C(K)$ we denote the Banach space of all real-valued functions on $K$ endowed with the supremum norm, and by $\C(K,[0,1])$ the subset of $\C(K)$ consisting of all functions $f$ such that $0\le f(x)\le 1$ for every $x\in K$. A function $f\in\C(K)$ is \emph{positive} if $f(x)\ge0$ for any $x\in K$; this cone of positive elements induces a natural Banach-lattice structure on $\C(K)$ such that for each $f,g\in\C(K)$ we have $f\le g$ if and only if $f(x)\le g(x)$ for all $x\in K$, as well as for each $f\in\C(K)$ we have $|f|(x)=|f(x)|$ for all $x\in K$.

If $U$ is a clopen subset of $K$, then we will often tacitly identify the space $\C(U)$ with the (closed linear) subspace $\{g\in\C(K)\colon\ g\rstr(K\setminus U)\equiv 0\}$ of $\C(K)$; note that $\C(U)$ is positively $1$-complemented in $\C(K)$.


\medskip

By $\M(K)$ we denote the Banach space of all regular signed Borel measures on $K$ endowed with the variation norm. Recall that by the Riesz--Markov--Kakutani representation theorem $\M(K)$ is isometrically isomorphic to the dual space $\C(K)^*$, with the action of $\M(K)$ on $\C(K)$ given for $\mu\in \M(K)$ and $f\in\C(K)$ by the formula $\mu(f)=\langle\mu,f\rangle=\int_K fd\mu$. A measure $\mu\in \M(K)$ is \emph{positive} if $\mu(B)\ge0$ for any Borel $B\subseteq K$. $\P(K)$ denotes the subset of $\M(K)$ consisting of all \emph{probability} measures, i.e. positive measures $\mu\in \M(K)$ such that $\mu(K)=1$. By default, the set $\P(K)$ is endowed with the weak$^*$ topology $w^*$ inherited from $\M(K)$. For a measure $\mu\in \M(K)$ by $|\mu|(\cdot)$ we denote its \emph{variation}. 

As usual, for a point $x\in K$, by $\delta_x$ we denote the \emph{one-point} measure (the \emph{Dirac delta}) concentrated at $x$. We further recall the standard fact that the set $\ext(B_{\M(K)})$ of all extreme points of the ball $B_{\M(K)}$ equals to $\{\pm \delta_x\colon x \in K \}$ (see e.g. \cite[Section 4.3]{lmns}), and thus, by the Krein--Milman theorem, we have $B_{\M(K)}=\overline{\mbox{co}}^{w^*}\{\pm \delta_x\colon x \in K \}$.

\medskip

For a function $f\in\C(K)$ we define its \emph{support} as $\supp(f)=\overline{\{x\in K\colon f(x)\neq 0\}}$. Similarly, the \emph{support} $\supp(\mu)$ of a measure $\mu\in \M(K)$ is the minimal closed subset $F\subseteq K$ such that $|\mu|(K\setminus F)=0$. If $F$ is a closed subset of $K$, then we naturally identify the space $\M(F)$ with the complemented subspace of $\M(K)$ consisting of all those measures $\mu$ for which $\supp(\mu)\subseteq F$; in a similar vein the set $\P(F)$ is identified with the closed subset of $\P(K)$. 
A function $f\in\C(K)$ (resp. a measure $\mu\in \M(K)$) is \emph{supported} by an open subset $U$ of $K$ if $\supp(f)\subseteq U$ (resp. $\supp(\mu)\subseteq U$). 

\medskip

As usual, $c_0$ denotes the Banach space $\{x\in\er^\omega\colon \lim_{n\to\infty}x(n)=0\}$ endowed with the supremum norm.

\section{Isometric descriptions of the spaces $\C([1, \omega^{\alpha}])$ and $\C(2^{\omega})$\label{sec:new_descriptions}}

In this section we provide isometric descriptions of the spaces $\C([1, \omega^{\alpha}])$, for countable $\alpha$, and $\C(2^{\omega})$ in terms of spaces of real-valued functions on trees $\Lambda\in\TT$. Although those descriptions may seem in the beginning a bit technical and intricate, they will be extremely useful in the sequel. Let us note that similar results, though definitely less concrete, have been obtained e.g. by Bourgain \cite[Section 2]{Bourgain1979} (see also \cite[page 35]{RosenthalC(K)}) and Dilworth \textit{et al.} \cite[Section 3.4]{Dilworth-et-al}.

We start with the following definition of a norm on the spaces $\er^\Lambda$, where $\Lambda\in\TT$.

\begin{definition}
Fix a tree $\Lambda\in\TT$ and let $\olb{a}=(a_s)_{s\in\Lambda}\in\er^\Lambda$. We define the \emph{$\Lambda$-norm} $\norm{\olb{a}}^\Lambda$ of $\olb{a}$ as
\[ \norm{\olb{a}}^\Lambda=\sup_{t\in\Lambda}\bigg|\sum_{s \preceq t} a_s\bigg|.\]\eodef
\end{definition}

\begin{definition}
Fix a tree $\Lambda\in\TT$ and let $\Omega\subseteq\Lambda$ be an arbitrary subset. For an element $\olb{a}=(a_s)_{s\in\Lambda}\in\er^\Lambda$, we define its \emph{restriction} $\olb{a}\rstr\Omega$ to $\Omega$ as the element of $\er^\Lambda$ given for every $s\in\Lambda$ by the formula:
\[
\big(\olb{a}\rstr\Omega\big)(s)=\begin{cases}
    a_s,&\text{ if }s\in\Omega,\\
    0,&\text{ is }s\in\Lambda\setminus\Omega.
\end{cases}
\]
We say that an element $\olb{a}\in\er^\Lambda$ is \emph{supported} by $\Omega$ if $\olb{a}\rstr\Omega=\olb{a}$, and that $\olb{a}$ is \emph{finitely supported} if there exists a finite set $F\subseteq\Lambda$ such that $\olb{a}$ is supported by $F$.\eodef
\end{definition}

\begin{definition}\label{def:tree_space}
Fix a tree $\Lambda\in\TT$. We define the set $\A_{\Lambda}\subseteq\er^{\Lambda}$ as the family of all systems $\olb{a}=(a_s)_{s \in \Lambda}\in\er^{\Lambda}$ such that:
\begin{enumerate}[({A}.1)]
    \item $\norm{\olb{a}}^\Lambda<\infty$,
    \item  for each $s \in NL(\Lambda)$ we have
    \[\lim_{n \rightarrow \infty} \norm{\olb{a}\rstr\Lambda_{s\concat n}}^{\Lambda}=0,\]
    \item we have 
    \[\lim_{n\to\infty}\norm{\olb{a}\rstr\{t\in\Lambda\colon l(t) \geq n\}}^{\Lambda}=0.\]
\end{enumerate}\eodef
\end{definition}

\begin{definition}
    For a tree $\Lambda\in\TT$, by $\mathcal{FS}(\A_\Lambda)$ we denote the subset of $\A_\Lambda$ consisting of all finitely supported elements in $\A_\Lambda$.\eodef
\end{definition}

Note that for the only tree $\Upsilon$ of rank $1$ we trivially have $\A_{\Upsilon}=\er^{\Upsilon}\cong\er$.

\begin{remark}\label{remark:norms_lim}
Fix $\Lambda\in\TT$. Let $\olb{a}=(a_s)_{s\in\Lambda}\in\er^\Lambda$. Then, for every $s\in\Lambda$ we have
\[\norm{\olb{a}\rstr\Lambda_s}^{\Lambda}=\norm{(a_{s\concat t})_{t\in\Lambda^s}}^{\Lambda^s}.\]
Consequently, if $\olb{a}$ satisfies condition (A.2), then for each $s \in NL(\Lambda)$ we have
\[\lim_{n\to\infty}\norm{(a_{(s\concat n)\concat t})_{t\in\Lambda^{s\concat n}}}^{\Lambda^{s\concat n}}=\lim_{n \rightarrow \infty} \norm{\olb{a}\rstr\Lambda_{s\concat n}}^{\Lambda}=0\]
as well as that for any $s\in\Lambda$ the system $(a_{s\concat t})_{t\in\Lambda^s}\in\er^{\Lambda^s}$ satisfies condition (A.2), too.
\end{remark}

The proof of the next lemma is routine, therefore we omit it.

\begin{lemma}
\label{A_lambda is a normed space}
Let $\Lambda\in\TT$. The space $\A_{\Lambda}$ with the coordinate-wise addition and multiplication by scalar is a vector space. The function $\norm{\cdot}^{\Lambda}$ is a norm on $\A_\Lambda$.
\end{lemma}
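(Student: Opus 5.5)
The plan is to verify the vector-space axioms directly from (A.1)--(A.3), and then check the norm axioms. The one tool used throughout is that the $\Lambda$-norm is a supremum of absolute values of finite linear functionals. For each $t\in\Lambda$ put $\pi_t(\olb{a})=\sum_{s\preceq t}a_s$. This is a finite sum, since $t$ has only $l(t)+1$ predecessors. Then
\[\norm{\olb{a}}^\Lambda=\sup_{t\in\Lambda}\abs{\pi_t(\olb{a})},\]
and each $\pi_t$ is linear on all of $\er^\Lambda$. Consequently $\norm{\cdot}^\Lambda$, as an extended-valued function on $\er^\Lambda$, is subadditive and absolutely homogeneous: we have $\abs{\pi_t(\olb{a}+\olb{b})}\le\abs{\pi_t(\olb{a})}+\abs{\pi_t(\olb{b})}$ for every $t$, and we take suprema.

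For the vector-space part, we note that restriction to a fixed $\Omega\subseteq\Lambda$ is linear: $(\olb{a}+\olb{b})\rstr\Omega=\olb{a}\rstr\Omega+\olb{b}\rstr\Omega$, and similarly for scalar multiples. Then (A.1) is preserved under sums and scalar multiples by subadditivity and homogeneity. For (A.2), fix $s\in NL(\Lambda)$. We have
\[\norm{(\olb{a}+\olb{b})\rstr\Lambda_{s\concat n}}^\Lambda\le\norm{\olb{a}\rstr\Lambda_{s\concat n}}^\Lambda+\norm{\olb{b}\rstr\Lambda_{s\concat n}}^\Lambda\to0.\]
Condition (A.3) is handled the same way, using the sets $\{t\colon l(t)\ge n\}$. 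Scalar multiples are treated identically, and the zero system obviously lies in $\A_\Lambda$. Hence $\A_\Lambda$ is a linear subspace of $\er^\Lambda$.

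For the norm axioms, finiteness on $\A_\Lambda$ is exactly (A.1), and the triangle inequality and homogeneity were obtained above. It remains to show definiteness. Suppose $\norm{\olb{a}}^\Lambda=0$. Then $\pi_t(\olb{a})=0$ for all $t\in\Lambda$. Taking $t=\emptyseq$ gives $a_\emptyseq=0$. For $t\ne\emptyseq$, the set $\Lambda$ is hereditary, so $pred(t)\in\Lambda$, and
\[a_t=\pi_t(\olb{a})-\pi_{pred(t)}(\olb{a})=0.\]
Thus $\olb{a}=0$.

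There is no genuine obstacle here. The only point needing care is definiteness, and it relies on the hereditary property of $\Lambda$ (so that $pred(t)\in\Lambda$) to invert the partial sums $\pi_t$ into the coordinates $a_t$.
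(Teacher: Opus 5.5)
Your proof is correct. The paper omits this proof as routine, and your argument is that routine check: linearity of the finite partial-sum functionals gives subadditivity and homogeneity, and hence closure of conditions (A.1)--(A.3) under linear combinations. Definiteness follows from $a_t=\pi_t(\olb{a})-\pi_{pred(t)}(\olb{a})$, using that $\Lambda$ is hereditary and contains $\emptyseq$.
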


From now on, for each $\Lambda\in\TT$, we will treat $\norm{\cdot}^{\Lambda}$ as the default norm on $\A_\Lambda$. 

The next lemma implies that the space $\mathcal{FS}(\A_\Lambda)$ is norm-dense in $\A_\Lambda$.

\begin{lemma}
\label{lem:fin-approx}
Fix a tree $\Lambda\in\TT$ and let $\olb{a}=(a_s)_{s \in \Lambda}\in \A_{\Lambda}$. Then, 
\[\olb{a}=\lim_{F \in\mathfrak{Tr}(\Lambda)}\olb{a}\rstr F,\]
where the limit is computed with respect to the $\Lambda$-norm of $\A_\Lambda$, that is, for every $\eps>0$ there is $F\in\mathfrak{Tr}(\Lambda)$ such that for every $F'\in\mathfrak{Tr}(\Lambda)$ containing $F$ we have $\norm{\olb{a}-\olb{a}\rstr F'}^\Lambda<\eps$.
\end{lemma}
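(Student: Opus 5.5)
Fix $\eps>0$. We will find a trunk $F$ such that $\olb{a}$ restricted to the complement of any larger trunk has small $\Lambda$-norm. Note that for any $F'\in\mathfrak{Tr}(\Lambda)$ we have $\olb{a}-\olb{a}\rstr F'=\olb{a}\rstr(\Lambda\setminus F')$. The norm of this element is the supremum over $t\in\Lambda$ of $\big|\sum_{s\preceq t, s\notin F'}a_s\big|$. Since $F'$ is hereditary, the set $\{s\preceq t\colon s\notin F'\}$ is a final segment of the branch below $t$. If nonempty, it starts at some $u\notin F'$ with $pred(u)\in F'$ (or $u=\emptyseq$, impossible since $\emptyseq\in F'$). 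Hence
\[\norm{\olb{a}\rstr(\Lambda\setminus F')}^\Lambda=\sup\big\{\norm{\olb{a}\rstr\Lambda_u}^\Lambda\colon u\notin F',\ pred(u)\in F'\big\},\]
and it suffices to make every such $\norm{\olb{a}\rstr\Lambda_u}^\Lambda$ at most $\eps$.

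First use (A.3): choose $N$ with $\norm{\olb{a}\rstr\{t\colon l(t)\ge N\}}^\Lambda<\eps$. For $u$ with $l(u)\ge N$, $\Lambda_u\subseteq\{t\colon l(t)\ge N\}$. For $t\in\Lambda_u$, the partial sum over $u\preceq s\preceq t$ is a difference of two partial sums of $\olb{a}\rstr\{l\ge N\}$ along the branch of $t$ (up to $t$, and up to $pred(u)$). So $\norm{\olb{a}\rstr\Lambda_u}^\Lambda\le 2\eps$, which is harmless. Thus only nodes $u$ of length $<N$ matter.

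Next build $F$ recursively by levels, using (A.2). Put $\emptyseq\in F$. Having placed a node $s\in F$ with $l(s)<N-1$ and $s\in NL(\Lambda)$, use (A.2) to pick $n_s$ such that $\norm{\olb{a}\rstr\Lambda_{s\concat n}}^\Lambda<\eps$ for all $n>n_s$. Then add $s\concat 0,\dots,s\concat n_s$ to $F$. Iterating for levels $0,\dots,N-1$ gives a finite hereditary set, so $F\in\mathfrak{Tr}(\Lambda)$.

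Now let $F'\supseteq F$ be a trunk and take $u\notin F'$ with $pred(u)\in F'$ and $l(u)\le N-1$; write $u=s\concat n$. If $s\in F$, then $n>n_s$ by construction, since otherwise $u\in F\subseteq F'$, so the bound is $<\eps$. If $s\notin F$, then some proper initial segment $v\preceq s$ satisfies $v\notin F$ with $pred(v)\in F$, and $l(v)\le l(s)<N-1$. So $v=w\concat m$ with $m>n_w$ and $\norm{\olb{a}\rstr\Lambda_v}^\Lambda<\eps$. We need to pass from this to $\Lambda_u\subseteq\Lambda_v$.

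This passage is the only delicate point, since the $\Lambda$-norm is not monotone under restriction. It is handled as in the length case: a partial sum from $u$ to $t$ equals the partial sum from $v$ to $t$ minus the one from $v$ to $pred(u)$. Both are controlled by $\norm{\olb{a}\rstr\Lambda_v}^\Lambda$, giving a bound of $2\eps$. Combining the cases yields $\norm{\olb{a}-\olb{a}\rstr F'}^\Lambda\le 2\eps$, which proves the lemma.
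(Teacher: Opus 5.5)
Your proof is correct and follows essentially the same route as the paper: you fix a depth cutoff $N$ via (A.3) and build a finite trunk $F$ level by level via (A.2), adding only finitely many children at each node. The difference is only in how the error is estimated. You observe that $\Lambda\setminus F'$ splits into the cones $\Lambda_u$ rooted at the minimal non-members $u$ of $F'$, so the norm is a supremum over these cones, each bounded by $2\eps$ via a difference of partial sums. The paper instead uses the per-level threshold $\eps/(4n)$, sums the error over the $n$ levels by the triangle inequality, and treats $F'\setminus F$ separately.
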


\begin{proof}
%
Fix $\eps>0$. 
By condition (A.3), we find $n \in \en$ such that
\[\norm{\olb{a}\rstr\{t \in \Lambda\colon l(t) > n\}}^\Lambda<\eps/4.\]
Set
\[\tilde{\Lambda}=\{s \in \Lambda\colon 0 \leq l(s) \leq n\},\]
so we have
\[\norm{\olb{a}\rstr(\Lambda\setminus\tilde{\Lambda})}^\Lambda<\eps/4.\]
We  proceed by induction on $k=1,\ldots, n$ to construct auxiliary subsets $F_k$ and $\Omega_k$ of $\Lambda$. 
Let 
\[F_1=\Big\{\langle m\rangle\in\tilde{\Lambda}\colon\  m\in \omega\text{ is such that } \norm{\olb{a}\rstr\Lambda_{\langle m\rangle}}^\Lambda \geq {\ep}/{(4n)}\Big\}\]
and 
\[\Omega_1=\bigcup\Big\{\tilde{\Lambda}_{\langle m\rangle}\colon\ m\in\omega,{\langle m\rangle}\not\in F_1\Big\}.\]
Assuming that $1 \leq k <n$ and the sets $F_k$ and $\Omega_k$ have been defined, let
\[F_{k+1}=\Big\{t\concat m\in\tilde{\Lambda}\colon t \in F_k\text{ and }m \in \omega\text{ are such that } \norm{\olb{a}\rstr\Lambda_{t\concat m}}^\Lambda \geq {\ep}/{(4n)}\Big\}\]
and 
\[\Omega_{k+1}=\bigcup\Big\{\tilde{\Lambda}_{t\concat m}\colon\ t\in F_{k}, m\in\omega, t\concat m\not\in F_{k+1}\Big\}.\]

After the above inductive process is finished, put
\[F=\{\emptyseq\} \cup \bigcup_{k=1}^n F_k\quad\text{and}\quad\Omega=\bigcup_{k=1}^n \Omega_k.\]
Then, by condition (A.2), $F$ is a trunk of $\Lambda$. Also, it follows from the construction that $\tilde{\Lambda}=F \cup \Omega$ and $F\cap\Omega=\emptyset$ as well as that for each $1\leq k\neq l\leq n$ we have $F_k\cap F_l=\emptyset$ and $\Omega_k\cap\Omega_l=\emptyset$. Finally, by the hereditariness of $\Lambda$ and $\tilde{\Lambda}$, for each $1 \leq k \leq n$ we have $\norm{\olb{a}\rstr\Omega_k} < {\ep}/{(4n)}$. Thus, by the triangle inequality for the $\Lambda$-norm, 
we get
\[\norm{\olb{a}-\olb{a}\rstr F}^{\Lambda}=\norm{\olb{a}\rstr(\Lambda\setminus F)}^{\Lambda}\leq\norm{\olb{a}\rstr(\Lambda \setminus \tilde{\Lambda})}^{\Lambda}+\norm{\olb{a}\rstr\Omega}^{\Lambda}\le\] 
\[\leq\norm{\olb{a}\rstr(\Lambda \setminus \tilde{\Lambda})}^{\Lambda}+\sum_{k=1}^n \norm{\olb{a}\rstr\Omega_k}^{\Lambda}<\ep/4+n\cdot({\ep}/{4n})=\ep/2.\]

To finish the proof, take any $F'\in\mathfrak{Tr}(\Lambda)$ such that $F\subseteq F'$ and note that, by the very fact that $F'$ is a trunk and a similar computation as above, we have
\[\norm{\olb{a}\rstr(F'\setminus F)}^\Lambda\le\norm{\olb{a}\rstr(F'\setminus\tilde{\Lambda})}^\Lambda+\norm{\olb{a}\rstr(F'\cap\Omega)}^\Lambda<\eps/4+\eps/4=\eps/2,\]
hence
\[\norm{\olb{a}-\olb{a}\rstr F'}^{\Lambda}=\norm{\olb{a}-\olb{a}\rstr F-\olb{a}\rstr(F'\setminus F)}^{\Lambda}\le\]
\[\le\norm{\olb{a}-\olb{a}\rstr F}^\Lambda+\norm{\olb{a}\rstr(F'\setminus F)}^{\Lambda}<\eps/2+\eps/2=\eps.\]
\end{proof}

We also define a vector-lattice structure on spaces $\A_\Lambda$.

\begin{definition}
Let $\Lambda\in\TT$. For $\olb{a}=(a_s)_{s \in \Lambda},\olb{b}=(b_s)_{s \in \Lambda}\in\A_\Lambda$, we write $\olb{a}\le\olb{b}$ if for each $t\in\Lambda$ we have
\[\sum_{s\preceq t}a_s\le\sum_{s\preceq t}b_s.\]
\eodef
\end{definition}


In addition to Lemma \ref{A_lambda is a normed space}, we have the following result.

\begin{lemma}
\label{A_lambda is a normed lattice}
Let $\Lambda\in\TT$. The space $\A_{\Lambda}$ with the above-defined partial order $\le$ is a normed lattice.
\end{lemma}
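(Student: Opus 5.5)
The cleanest route is to transport everything through the summation map. Define $\Sigma\colon\er^\Lambda\to\er^\Lambda$ by
\[
\Sigma(\olb{a})(t)=\sum_{s\preceq t}a_s .
\]
This map is a linear bijection: its inverse is $\Sigma^{-1}(\olb{f})(\emptyseq)=f(\emptyseq)$ and $\Sigma^{-1}(\olb{f})(s)=f(s)-f(pred(s))$ for $l(s)>0$. By definition, $\olb{a}\le\olb{b}$ holds if and only if $\Sigma(\olb{a})\le\Sigma(\olb{b})$ pointwise. Likewise, $\norm{\olb{a}}^\Lambda=\sup_t|\Sigma(\olb{a})(t)|$. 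So $\Sigma$ is an order- and norm-preserving linear bijection of $(\A_\Lambda,\le,\norm{\cdot}^\Lambda)$ onto its image $\Sigma[\A_\Lambda]\subseteq\ell_\infty(\Lambda)$, which carries the pointwise order and the sup norm.

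In particular, $\le$ is a partial order compatible with the vector operations: it is reflexive, transitive and antisymmetric by injectivity of $\Sigma$, and it is translation- and positive-scalar invariant. Moreover, the monotonicity of the norm reduces to the pointwise statement $|f|\le|g|\Rightarrow\sup|f|\le\sup|g|$.

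The whole lemma then reduces to one claim: $\Sigma[\A_\Lambda]$ is closed under the pointwise lattice operations. Since $f\vee g=g+(f-g)^+$ and $\Sigma[\A_\Lambda]$ is a linear subspace, it suffices to show that $f\in\Sigma[\A_\Lambda]$ implies $f^+\in\Sigma[\A_\Lambda]$. Given this, the lattice operations on $\A_\Lambda$ are $\olb{a}\vee\olb{b}=\Sigma^{-1}(\Sigma\olb{a}\vee\Sigma\olb{b})$, and so on. The identity $|\olb{a}|=\Sigma^{-1}|\Sigma\olb{a}|$ together with the sup-norm property yields $|\olb{a}|\le|\olb{b}|\Rightarrow\norm{\olb{a}}^\Lambda\le\norm{\olb{b}}^\Lambda$.

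The main obstacle is verifying (A.1)--(A.3) for $\olb{c}=\Sigma^{-1}(f^+)$, where $f=\Sigma(\olb{a})$ with $\olb{a}\in\A_\Lambda$.

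Condition (A.1) is immediate, because $\norm{\olb{c}}^\Lambda=\sup|f^+|\le\sup|f|$.

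The key estimate concerns restrictions to upward closed sets. For a set $\Omega\subseteq\Lambda$ that is upward closed (such as $\Lambda_{s}$ or $\{t\colon l(t)\ge n\}$), $\norm{\olb{c}\rstr\Omega}^\Lambda$ equals the sup, over $t\in\Omega$, of $|f^+(t)-f^+(p_\Omega(t))|$. Here $p_\Omega(t)$ is the last node below $t$ outside $\Omega$, and the term $f^+(p_\Omega(t))$ is taken as $0$ if no such node exists. The same description holds for $\olb{a}$ with $f$ in place of $f^+$. Since $x\mapsto x^+$ is $1$-Lipschitz, we get
\[
\norm{\olb{c}\rstr\Omega}^\Lambda\le\norm{\olb{a}\rstr\Omega}^\Lambda .
\]
Applying this with $\Omega=\Lambda_{s\concat n}$ and $\Omega=\{t\colon l(t)\ge n\}$ transfers (A.2) and (A.3) from $\olb{a}$ to $\olb{c}$.

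I would write out this key estimate carefully, since it is the only nonformal step. The point to check is that for $t\in\Omega$ the partial sum over $s\preceq t$ with $s\in\Omega$ telescopes to $f(t)-f(p_\Omega(t))$, and this uses the upward closedness of $\Omega$. Everything else (the lattice axioms, distributivity, and so on) is inherited from $\ell_\infty(\Lambda)$ via $\Sigma$.
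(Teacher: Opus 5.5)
Your proposal is correct and is essentially the paper's argument: the paper's candidate supremum $\olb{c}$ is exactly $\Sigma^{-1}(\Sigma\olb{a}\vee\Sigma\olb{b})$, conditions (A.2)--(A.3) are checked there with the same Lipschitz idea (via $|\max\{a,b\}-\max\{c,d\}|\le\max\{|a-c|,|b-d|\}$), and monotonicity of the norm comes from the identity $\sum_{t\preceq s}\widehat{a}_t=|\sum_{t\preceq s}a_t|$, i.e.\ $|\olb{a}|=\Sigma^{-1}|\Sigma\olb{a}|$. Your version packages this through the summation isomorphism onto a sublattice of $\ell_\infty(\Lambda)$, reduces to positive parts via $f\vee g=g+(f-g)^+$, and uses a single upward-closed-set estimate, which makes the argument cleaner: it handles (A.2) and (A.3) uniformly, whereas the paper proves (A.2) by contradiction and says (A.3) is checked ``in a similar fashion''.
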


\begin{proof}
We need to show that, given $\olb{a}=(a_s)_{s \in \Lambda}, \olb{b}=(b_s)_{s \in \Lambda} \in \A_{\Lambda}$, their supremum $\olb{a} \vee \olb{b}$ exists in $\A_{\Lambda}$, and the implication $\abs{\olb{a}} \leq \abs{\olb{b}} \Rightarrow \norm{\olb{a}}^\Lambda \leq \norm{\olb{b}}^\Lambda$ holds true, where, recall, we have 
\[\abs{\olb{a}}=\olb{a}^+ \vee \olb{a}^{-}=(\olb{a} \vee 0) \vee (-\olb{a} \vee 0).\] 

We first claim that the element $\olb{c}=(c_s)_{s \in \Lambda}$ defined by setting $c_{\emptyseq}= \max\{a_{\emptyseq},b_{\emptyseq}\}$ and, for $s \neq \emptyseq$, 
\[c_s=\max\bigg\{\sum_{t \preceq s} a_t\ ,\ \sum_{t \preceq s} b_t\bigg\}-\max\bigg\{\sum_{t \prec s} a_t\ ,\ \sum_{t \prec s} b_t \bigg\},\]
is the supremum of the pair $\{\olb{a}, \olb{b}\}$ in $\A_{\Lambda}$. First, we need to show that $\olb{c}$ belongs to the space $\A_{\Lambda}$. Note that for every $t\in\Lambda$ and $u\preceq t$ we have
\[\tag{$*$}\sum_{u\preceq s\preceq t}c_s=\max\bigg\{\sum_{s \preceq t} a_s\ ,\ \sum_{s \preceq t} b_s\bigg\}-\max\bigg\{\sum_{s \prec u} a_s\ ,\ \sum_{s \prec u} b_s\bigg\}.\]
Taking $u=\emptyseq$, it follows that 
\[\norm{\olb{c}}^\Lambda=\sup_{t \in \Lambda} \abs{\sum_{s \preceq t} c_s}=\sup_{t \in \Lambda} \abs{ \max\bigg\{\sum_{s \preceq t} a_s\ ,\ \sum_{s \preceq t} b_s\bigg\}} \leq\]
\[\leq \max\Bigg\{\sup_{t \in \Lambda} \bigg|\sum_{s \preceq t} a_s\bigg|\ ,\ \sup_{t \in \Lambda} \bigg|\sum_{s \preceq t} b_s\bigg|\Bigg\}=\max \Big\{\norm{\olb{a}}^\Lambda\ ,\ \norm{\olb{b}}^\Lambda\Big\} <\infty,\]
so condition (A.1) holds for $\olb{c}$. 

We now prove that $\olb{c}$ satisfies condition (A.2). Assume that it does not---by ($*$) it follows that there are $s\in NL(\Lambda)$, $\eps>0$, a strictly increasing sequence $(n_k)_{k\in\omega}$ in $\omega$, and a sequence $(t_k)_{k\in\omega}$ in $\Lambda$ such that for every $k\in\omega$ we have $s\concat n_k\preceq t_k$ and 
\[\abs{\max\bigg\{\sum_{u \preceq t_k} a_u\ ,\ \sum_{u \preceq t_k} b_u\bigg\}-\max\bigg\{\sum_{u \prec s\concat n_k} a_u\ ,\ \sum_{u \prec s\concat n_k} b_u\bigg\}}\ge\eps.\]
By the inequality 
\[\abs{\max\{a, b\}-\max\{c, d\}} \leq \max\{\abs{a-c}, \abs{b-d}\},\]
valid for all real numbers $a, b, c, d$, for each $k\in\omega$ we have
\[\eps \leq \max \Bigg\{\abs{\sum_{s\concat n_k \preceq u \preceq t_k} a_u}\ ,\ \abs{\sum_{s\concat n_k \preceq u \preceq t_k} b_u} \Bigg\},\]
which contradicts the fact that both $\olb{a}$ and $\olb{b}$ satisfy condition (A.2).

Condition (A.3) can be checked in a similar fashion as condition (A.2). This implies that $\olb{c}\in \A_\Lambda$.

\medskip

Now, we check that $\olb{c}$ is an upper bound for $\olb{a}$ and $\olb{b}$. By the definition of the order, for each $t \in \Lambda$, we have 
\[\sum_{s\preceq t}c_s=\max\bigg\{\sum_{s \preceq t} a_s\ ,\ \sum_{s \preceq t} b_s\bigg\} \geq \sum_{s\preceq t}a_s,\]
i.e., $\olb{c} \geq \olb{a}$. We similarly show that $\olb{c} \geq \olb{b}$. 

Next, assume that an element $\olb{d} \in \A_{\Lambda}$ satisfies $\olb{d} \geq \olb{a}$ and $\olb{d} \geq \olb{b}$. As above, this is the same as saying that for each $t \in \Lambda$ we have
\[\sum_{s \preceq t} d_s \geq \sum_{s \preceq t} a_s\quad\text{and}\quad\sum_{s \preceq t} d_s \geq \sum_{s \preceq t} b_s.\]
Hence, for each $t\in\Lambda$ it holds
\[\sum_{s \preceq t} d_s \geq \max\bigg\{\sum_{s \preceq t} a_s\ ,\ \sum_{s \preceq t} b_s\bigg\}=\sum_{s \preceq t} c_s,\]
which proves that $\olb{c} \leq \olb{d}$, and so $\olb{c}$ is the supremum of the set $\{\olb{a}, \olb{b}\}$ in $\A_{\Lambda}$, as desired.

\medskip

Finally, we show that $\abs{\olb{a}} \leq \abs{\olb{b}}$ implies that $\norm{\olb{a}}^\Lambda \leq \norm{\olb{b}}^\Lambda$. First, notice that for the positive part $\olb{a}^+=\olb{a}\vee 0=(a_{s,+})_{s \in \Lambda}$ and the negative part $\olb{a}^-=(-\olb{a})\vee 0=(a_{s,-})_{s \in \Lambda}$ of $\olb{a}$ we have $a_{\emptyseq,+}=a_{\emptyseq}^+$, $a_{\emptyseq,-}=a_{\emptyseq}^-$, and, for $s \in \Lambda \setminus\{\emptyseq\}$, 
\[a_{s, +}=\bigg(\sum_{t \preceq s} a_t \bigg)^{+}-\bigg(\sum_{t \prec s} a_t \bigg)^{+}\quad\text{and}\quad a_{s, -}=\bigg(\sum_{t \preceq s} a_t \bigg)^{-}-\bigg(\sum_{t \prec s} a_t \bigg)^{-}.\]
It follows that for the element $\abs{\olb{a}}=(\widehat{a}_{s})_{s \in \Lambda}=\olb{a}^+ \vee \olb{a}^-$ it holds $\widehat{a}_{\emptyseq}=\max\{a_{\emptyseq}^+,a_{\emptyseq}^-\}=\abs{a_{\emptyseq}}$ and, for $s \neq \emptyseq$, 
\[
 \widehat{a}_{s}=\max\bigg\{\sum_{t \preceq s} a_{t, +}\ ,\ \sum_{t \preceq s} a_{t, -}\bigg\}-\max\bigg\{\sum_{t \prec s} a_{t, +}\ ,\ \sum_{t \prec s} a_{t, -}\bigg\}=\]
\[=\max\Bigg\{\bigg(\sum_{t \preceq s} a_t \bigg)^{+}\ ,\ \bigg(\sum_{t \preceq s} a_t \bigg)^{-}\Bigg\}-\max\Bigg\{\bigg(\sum_{t \prec s} a_t \bigg)^{+}\ ,\ \bigg(\sum_{t \prec s} a_t \bigg)^{-}\Bigg\}=\]  
\[=\abs{\sum_{t \preceq s} a_t}-\abs{\sum_{t \prec s} a_t}.\]
So, the assumption $\abs{\olb{a}} \leq \abs{\olb{b}}$ yields that for each $s \in \Lambda$ we have 
\[\abs{\sum_{t \preceq s} a_t}=\sum_{t \preceq s} \widehat{a}_{s} \leq \sum_{t \preceq s} \widehat{b}_{s}=\abs{\sum_{t \preceq s} b_t},\]
which immediately implies $\norm{\olb{a}}^\Lambda \leq \norm{\olb{b}}^\Lambda$, and so the proof of the lemma is finished.
\end{proof}

The following simple lemma, which presents basic yet important properties of the $^+$-operation in spaces $\A_\Lambda$, can be proved in a similar spirit.

\begin{lemma}\label{lem:norm of the positive part}
    Let $\Lambda\in\TT$ and $\olb{a}=(a_s)_{s \in \Lambda}$. 
    \begin{enumerate}[(1)]
    \item $\olb{a}\in(\A_\Lambda)_+$ if and only if for each $t\in\Lambda$ we have
    \[\sum_{s\preceq t}a_s\ge 0.\]
    \item It holds
    \[\norm{\olb{a}^+}^\Lambda=\sup_{t\in\Lambda}\bigg(\sum_{s \preceq t} a_s\bigg)^{+}\]
    and
    \[\norm{\olb{a}}^\Lambda=\max\big\{\norm{\olb{a}^+}^\Lambda,\norm{\olb{a}^-}^\Lambda\big\}.\]
    \item For each subset $\Omega\subseteq\Lambda$ we have
    \[\olb{a}^+\rstr\Omega=(\olb{a}\rstr\Omega)^+.\]
    \end{enumerate}
\end{lemma}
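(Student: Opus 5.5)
The plan is to reduce everything to the "partial sums" picture already used in the proof of Lemma \ref{A_lambda is a normed lattice}. Put $A_t=\sum_{s\preceq t}a_s$ for $t\in\Lambda$. The order on $\A_\Lambda$ is defined by comparing these partial sums pointwise, and the lattice operations were computed there explicitly. So the assignment $\olb{a}\mapsto(A_t)_{t\in\Lambda}$ is an injective, order-preserving linear map into $\er^\Lambda$ with the pointwise order, and it carries $\vee$ to the pointwise maximum. Under this map the norm $\norm{\olb{a}}^\Lambda$ becomes $\sup_t|A_t|$.

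\textbf{Item (1).} By definition, $\olb{a}\ge 0$ means $A_t\ge\sum_{s\preceq t}0=0$ for every $t$. Here $0$ denotes the zero element of $\A_\Lambda$, whose partial sums all vanish. This is exactly the stated condition, so nothing more is needed.

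\textbf{Item (2).} Apply the formula for $\olb{a}\vee\olb{b}$ from the proof of Lemma \ref{A_lambda is a normed lattice} with $\olb{b}=0$. The displayed identity $(*)$ there, taken with $u=\emptyseq$, gives the partial sums of $\olb{a}^+$:
\[\sum_{s\preceq t}a_{s,+}=\max\{A_t,0\}=A_t^+.\]
Since these are nonnegative, taking the supremum over $t$ yields $\norm{\olb{a}^+}^\Lambda=\sup_t(A_t)^+$. In the same way the partial sums of $\olb{a}^-$ are $A_t^-$. Then $|A_t|=\max\{A_t^+,A_t^-\}$, and interchanging the two suprema gives $\norm{\olb{a}}^\Lambda=\max\{\norm{\olb{a}^+}^\Lambda,\norm{\olb{a}^-}^\Lambda\}$.

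\textbf{Item (3).} This is the step that needs care, because restriction does not commute with taking partial sums in general. I would compare coordinates directly using the explicit formula
\[a_{s,+}=\Big(\sum_{t\preceq s}a_t\Big)^+-\Big(\sum_{t\prec s}a_t\Big)^+.\]
For $s\notin\Omega$ both sides of (3) have $s$-coordinate $0$, provided that coordinate of $(\olb{a}\rstr\Omega)^+$ vanishes. For $s\in\Omega$ the $s$-coordinates must agree, which requires the partial sums of $\olb{a}$ and of $\olb{a}\rstr\Omega$ along the branch below $s$ to coincide. That holds when $\Omega$ is hereditary, for instance a trunk, and it is also fine for sets of the form $\Lambda_u$ once the common prefix is handled.

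However, the formula does not hold literally for arbitrary $\Omega$: take $\Omega=\{\langle0\rangle\}$ with $a_\emptyseq=-1$ and $a_{\langle0\rangle}=1$. I therefore expect the intended reading either to be with coordinatewise positive parts, or to rely on the convention that the $^+$ in (3) is applied within the relevant subtree. So I would first pin down the intended class of $\Omega$, or the intended meaning of $^+$, from how (3) is used later, and then run the coordinate comparison above. Finding this correct reading, not the computation itself, is the main obstacle.
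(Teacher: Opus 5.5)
Your proof of (1) and (2) is the intended one. The paper gives no separate proof and only says the lemma follows ``in a similar spirit'' as Lemma~\ref{A_lambda is a normed lattice}. Passing to the partial sums $A_t=\sum_{s\preceq t}a_s$, and reading off from $(*)$ there (with $u=\emptyseq$, $\olb{b}=0$) that $\olb{a}^{\pm}$ has partial sums $A_t^{\pm}$, is exactly that computation. Your counterexample to (3) is also correct for every $\Lambda\neq\Upsilon$. All partial sums of your $\olb{a}$ are $\le 0$, so $\olb{a}^+=0$. On the other hand, $\olb{a}\rstr\{\langle 0\rangle\}=\tilde{\chi}_{\langle 0\rangle}\ge 0$ is its own positive part. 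So item (3) is false as stated, and the paper contains no argument for it.

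What your proposal lacks is the clean resolution, and one of your side remarks is wrong. The correct statement is that (3) holds for all $\olb{a}$ if and only if $\Omega$ is hereditary.

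\emph{Sufficiency.} This is your coordinate comparison, plus the point you left open. If $\Omega$ is hereditary and $s\notin\Omega$, then $\{t\preceq s\}\cap\Omega=\{t\prec s\}\cap\Omega$. Hence the $s$-coordinate of $(\olb{a}\rstr\Omega)^+$ has the form $c^+-c^+=0$.

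\emph{Necessity.} Transplant your example: if $u\prec s$ with $s\in\Omega$ and $u\notin\Omega$, put $a_u=-1$, $a_s=1$, and all other coordinates $0$.

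The hereditary version is all the paper needs, since (3) is only invoked for trunks $F$, in the proof of Theorem~\ref{A_Lambda is C(2^omega)} and in Lemma~\ref{lem:tree of continuous functions}.

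Your claim that cones $\Lambda_u$ are ``fine'' is false. With your own $\olb{a}$ and $\Omega=\Lambda_{\langle 0\rangle}$ you get $\olb{a}^+\rstr\Lambda_{\langle 0\rangle}=0\neq\tilde{\chi}_{\langle 0\rangle}=(\olb{a}\rstr\Lambda_{\langle 0\rangle})^+$. Neither alternative reading rescues (3):
\begin{itemize}
\item Coordinatewise positive parts contradict the definition $\olb{a}^+=\olb{a}\vee 0$ and item (2).
\item Computing the positive part of the shifted element $(a_{0\concat t})_{t\in\Lambda^{\langle 0\rangle}}$ inside $\A_{\Lambda^{\langle 0\rangle}}$ gives the nonzero element $\tilde{\chi}_{\emptyseq}$, still not matching $\olb{a}^+\rstr\Lambda_{\langle 0\rangle}=0$.
\end{itemize}
Where the paper does work with cones, in Theorem~\ref{A_lambda is C(omega^alpha)}, it uses item (2) for the subtree $\Lambda^{\langle n\rangle}$, not item (3).
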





Before proceeding further, we recall the notion from \cite[Definition 3.9]{Rondos-Sobota_copies_of_separable_C(L)}. 

\begin{definition}
Let $E, F$ be normed lattices. An operator $T\colon E \rightarrow F$ \emph{preserves norms of the positive parts} (or, in short, is \emph{PNPP}), if for each $e \in E$ we have $\norm{e^+}=\norm{T(e)^+}$.\eodef
\end{definition}

Of course, if an operator $T\colon E\to F$ between two normed lattices is PNPP, then due to linearity it also preserves norms of the negative parts, that is, for each $e \in E$ we have $\norm{e^-}=\norm{T(e)^-}$. Moreover, $T$ is then a positive operator, since an element $e \in E$ is positive if and only if its negative part $e^-$ is zero. Note that we do not require that the image of $T$ is a sublattice of $F$. However, we have the following simple but useful fact.

\begin{lemma}\label{lem:pnpp_inverse_positive}
If $E, F$ are normed lattices and $T \colon E \rightarrow F$ is a PNPP injective operator onto a subspace $Z$ of $F$, then $T^{-1} \colon Z \rightarrow E$ is a positive operator.    
\end{lemma}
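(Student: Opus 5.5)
Fix $z\in Z_+$; the plan is to show $T^{-1}(z)\in E_+$. Since $T$ is injective and maps $E$ onto $Z$, there is a unique $e\in E$ with $T(e)=z$, and it suffices to prove $e^-=0$. In a normed lattice an element is positive exactly when its negative part vanishes, so this is what I will aim for.

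The first step is to use that $T$ is PNPP, which gives $\norm{e^-}=\norm{T(e)^-}$. This is the remark made right after the PNPP definition: apply the defining identity to $-e$ and use $(-e)^+=e^-$ together with linearity $T(-e)=-T(e)$.

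The second step is to evaluate the right-hand side. Since $T(e)=z\in Z_+=Z\cap F_+$, we have $z\ge0$ in $F$, so $z^-=(-z)\vee 0=0$. Hence $\norm{e^-}=0$, which gives $e^-=0$, so $e\ge 0$ and $T^{-1}(z)\in E_+$.

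Linearity of $T^{-1}$ is automatic because $T$ is a linear bijection onto $Z$, so $T^{-1}$ is a positive operator. I see no real obstacle here. The only point to state explicitly is that positivity on $Z$ means positivity with respect to the cone $Z_+=Z\cap F_+$, in line with the convention $G_+=G\cap E_+$ from the preliminaries. Boundedness of $T^{-1}$ is not needed and not claimed.
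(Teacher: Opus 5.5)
Your proposal is correct and follows essentially the same argument as the paper: take the preimage $e$ of $z\in Z_+$, use the PNPP property for negative parts to get $\norm{e^-}=\norm{z^-}=0$, and conclude that $e=T^{-1}(z)$ is positive. Your extra justification of the negative-part identity via $-e$ is exactly the remark the paper makes after the PNPP definition.
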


\begin{proof}
Let $z \in Z_+$. We find $e \in E$ such that $T(e)=z$. Then, we have
\[\norm{T^{-1}(z)^{-}}=\norm{e^-}=\norm{T(e)^{-}}=\norm{z^-}=0.\]
Thus, $T^{-1}(z)^{-}=0$, that is, $T^{-1}(z)$ is a positive element of $E$.
\end{proof}

Further, for certain classes of normed lattices, a surjective PNNP operator is automatically a Banach-lattice isometry.

\begin{lemma}
\label{lem:ordered normed spaces and PNPP}
Assume that $X$ is a normed lattice and $Y$ is a Banach lattice such that 
for each $x \in X$ and $y \in Y$ we have 
\[\norm{x}=\max\big\{{\norm{x^+}, \norm{x^-}}\big\}\quad\text{and}\quad\norm{y}=\max\big\{{\norm{y^+}, \norm{y^-}}\big\}.\]
Let $T\colon X\to Y$ be a PNPP operator. Then:
\begin{enumerate}[(i)]
	\item $T$ is an isometry into $Y$,
	\item if $T$ is a surjection onto $Y$, then $X$ is also a Banach lattice and $T$ is a lattice isometry.
\end{enumerate}
\end{lemma}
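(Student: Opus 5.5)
For (i), take $x\in X$. PNPP gives $\norm{x^+}=\norm{T(x)^+}$ and $\norm{x^-}=\norm{T(x)^-}$; the latter is the PNPP identity applied to $-x$, using $(-x)^+=x^-$ and linearity of $T$. Since both $X$ and $Y$ satisfy the norm formula, we get
\[\norm{T(x)}=\max\{\norm{T(x)^+},\norm{T(x)^-}\}=\max\{\norm{x^+},\norm{x^-}\}=\norm{x},\]
so $T$ is an isometry. In particular $T$ is injective.

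For (ii), suppose $T$ is onto $Y$. By (i) it is a surjective linear isometry, so $X$ is complete, since it is isometric to the Banach space $Y$. Next, $T$ is positive: if $x\ge 0$ then $x^-=0$, so $\norm{T(x)^-}=0$ and hence $T(x)\ge0$. By Lemma \ref{lem:pnpp_inverse_positive}, the inverse $T^{-1}\colon Y\to X$ is positive as well. A bijective linear map that is positive with positive inverse is an order isomorphism of the underlying ordered vector spaces. It therefore preserves all suprema and infima that exist, and in particular $T(x_1\vee x_2)=T(x_1)\vee T(x_2)$. So $T$ is a lattice homomorphism, and indeed a lattice isomorphism.

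It remains to see that $X$ is a Banach lattice, i.e. a complete normed lattice. This follows from the completeness shown above together with the normed-lattice assumption on $X$. Alternatively, the lattice-norm monotonicity transfers from $Y$ via the isometric lattice isomorphism $T$. Combining these, $T$ is a lattice isometry.

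The only delicate point is that PNPP alone does not obviously give $T(x\vee y)=Tx\vee Ty$. This is handled by the order-isomorphism argument, which needs both $T$ and $T^{-1}$ to be positive, and that is why surjectivity and Lemma \ref{lem:pnpp_inverse_positive} are used.
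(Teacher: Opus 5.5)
Your proof is correct and follows essentially the same route as the paper: the max-formula gives the isometry, surjectivity transfers completeness, and positivity of $T$ (from PNPP) and of $T^{-1}$ (from Lemma \ref{lem:pnpp_inverse_positive}) yields preservation of the lattice operations. The only difference is that you spell out the order-isomorphism step showing $T(x_1\vee x_2)=T(x_1)\vee T(x_2)$, which the paper leaves implicit.
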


\begin{proof}
For each $x \in X$ we have
\[\norm{x}=\max\big\{\norm{x^+}, \norm{x^-}\big\}=\max\big\{\norm{(Tx)^+}, \norm{(Tx)^-}\big\}=\norm{Tx},\]
hence $T$ is an isometry into $Y$. If $T$ is onto $Y$, then this, of course, also implies that $X$ is a Banach space and so a Banach lattice. Further, by the PNPP property and Lemma \ref{lem:pnpp_inverse_positive}, 
$T$ and $T^{-1}$ are positive operators, hence $T$ preserves the lattice operations.
\end{proof}

Note that the equalities required in the assumptions of the lemma hold in all so-called \emph{abstract M-spaces} (or \emph{AM-spaces}), see \cite{lacey}. As we will see in the next subsection, the lemma will be particularly useful for the case of spaces $X=\A_{\Lambda}$ and $Y=\C(K)$ for some trees $\Lambda$ and compact spaces $K$. 

\medskip

Finally, let us also introduce some auxiliary notation.

\begin{definition}
For $\Lambda\in\TT$, $m \in \en$, and $1\le i\le m$, we denote:
\begin{itemize}
    \item by $\A_\Lambda^m$ the $m$-th power of the space $\A_\Lambda$ endowed with the maximum norm, i.e. $\A_\Lambda^m=\big(\bigoplus_{i=1}^m \A_\Lambda\big)_\infty$,
    \item by $\Sigma_m(\Lambda)$ the topological disjoint union of $m$ copies of $\Lambda$, i.e. $\Sigma_m(\Lambda)=\Lambda\times\{1,\ldots,m\}$,
    \item and by $\pi^m_i(\Lambda)$ the $i$-th copy of $\Lambda$ in $\Sigma_m(\Lambda)$, i.e. $\pi^m_i(\Lambda)=\Lambda\times\{i\}$.
\end{itemize}
\eodef
\end{definition}

Note that by virtue of Lemma \ref{A_lambda is a normed lattice} the above space $\A_\Lambda^m$ is a normed lattice when endowed with the coordinate-wise order.

\begin{definition}\label{def:chi_delta}
Let $\Lambda\in\TT$ and $m \in \en$. For $(s, i) \in \Sigma_m(\Lambda)$, by $\tilde{\chi}_{s, i}$ we denote the element
\[(a_{t, j})_{(t, j) \in \Sigma_m(\Lambda)} \in \A_{\Lambda}^m\]
such that $a_{s, i}=1$ and $a_{t, j}=0$ for $(t, j) \neq (s, i)\in\Sigma_m(\Lambda)$, and by $\tilde{\delta}_{s, i}\colon \A_{\Lambda}^m \rightarrow \er$ the function 
\[ \A_{\Lambda}^m\ni(a_{t, j})_{(t, j) \in \Sigma_m(\Lambda)}\longmapsto\tilde{\delta}_{s, i}\big((a_{t, j})_{(t, j) \in \Sigma_m(\Lambda)}\big)=\sum_{t \preceq s}a_{t, i}.\]
If $m=1$, then for every $s\in\Sigma_m(\Lambda)=\Lambda$ we will simply write $\tilde{\delta}_s$ instead of $\tilde{\delta}_{s,1}$ and $\tilde{\chi}_s$ instead of $\tilde{\chi}_{s,1}$.\eodef
\end{definition}

\noindent Notice that, for $(s, i), (t, j) \in \Sigma_m(\Lambda)$, we have
\[\tilde{\delta}_{s, i}\big(\tilde{\chi}_{t, j}\big)=
\begin{cases}
  1, & \text{ if } t \preceq s \text{ and } i=j, \\
  0, & \text{ otherwise}. \\
\end{cases}\]

The notation introduced by Definition \ref{def:chi_delta} is motivated by the fact that, when in the next subsection we isometrically identify the spaces $\A_{\Lambda}^m$ with separable spaces $\C(K)$, then the above-defined objects $\tilde{\chi}_{s,i}$ and $\tilde{\delta}_{s,i}$ will naturally correspond to the characteristic functions of certain clopen subsets of $K$  and to certain Dirac measures on $K$, respectively.

\subsection{Isometry between the spaces $\A_\Lambda$ with $\Lambda\in\TT_{\alpha+1}$ and $\C([1,\omega^\alpha])$}

We will prove that for a countable ordinal number $\alpha$ and any tree $\Lambda\in\TT_{\alpha+1}$ the spaces $\A_\Lambda$ and $\C([1,\omega^\alpha])$ are isometrically isomorphic.

We start with the following two lemmas which show that in the case of trees of countable rank condition (A.2) actually implies both conditions (A.1) and (A.3).

\begin{lemma}\label{lem:alambda_norm_finite}
    Let $\Lambda\in\TT_{\alpha+1}$ for some countable ordinal $\alpha$. If $\olb{a}\in \er^\Lambda$ satisfies condition (A.2), then $\olb{a}$ satisfies condition (A.1).
\end{lemma}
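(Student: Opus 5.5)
I will prove the statement by transfinite induction on the rank of nodes. I will show that for every $s\in\Lambda$ the quantity
\[M(s)=\sup_{t\in\Lambda_s}\bigg|\sum_{s\preceq u\preceq t}a_u\bigg|=\norm{\olb{a}\rstr\Lambda_s}^\Lambda\]
is finite. Since $\Lambda$ has no infinite branches, every $s\in\Lambda$ has a well-defined rank $r(s)$, and $r(s)\le r(\emptyseq)=\alpha$. So I will prove that $M(s)<\infty$ by induction on $r(s)$. The case $s=\emptyseq$ then gives $\norm{\olb{a}}^\Lambda=M(\emptyseq)<\infty$, which is condition (A.1).

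The base case is when $r(s)=0$. Then $s$ is a leaf, so $\Lambda_s=\{s\}$ and $M(s)=|a_s|<\infty$.

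For the inductive step, let $s\in NL(\Lambda)$ and assume that $M(t)<\infty$ for all $t$ with $r(t)<r(s)$. By the tree axioms $s\concat n\in\Lambda$ for every $n$, and each $t\in\Lambda_s$ is either $s$ itself or satisfies $s\concat n\preceq t$ for a unique $n$. Hence
\[M(s)\le |a_s|+\sup_{n\in\omega}M(s\concat n).\]
Each $s\concat n$ has $r(s\concat n)<r(s)$, so $M(s\concat n)<\infty$ by the inductive hypothesis.

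The main point is that these finitely-many-at-a-time bounds must be uniform in $n$, and condition (A.2) gives exactly this. By (A.2), $\lim_{n\to\infty}M(s\concat n)=\lim_{n\to\infty}\norm{\olb{a}\rstr\Lambda_{s\concat n}}^\Lambda=0$. So there is $n_0$ such that $M(s\concat n)\le 1$ for all $n\ge n_0$. The remaining finitely many values $M(s\concat n)$ with $n<n_0$ are each finite. Therefore $\sup_n M(s\concat n)<\infty$, and so $M(s)<\infty$, which completes the induction.

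One should check that the recursion is a legitimate well-founded induction. The rank function is defined because $\Lambda$ has no infinite branches, and strict decrease $r(s\concat n)<r(s)$ follows from the definition $r(s)=\sup_{s\prec t}(r(t)+1)$. The countability of $\alpha$ is not actually needed; only the absence of infinite branches is used. This is exactly what fails for the full tree $\el$, where (A.1) must be assumed separately.
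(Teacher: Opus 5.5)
Your proof is correct and is essentially the paper's argument. The paper inducts on the rank of the tree and bounds $\norm{\olb{a}}^\Lambda\le|a_\emptyseq|+\sup_n\norm{\olb{a}\rstr\Lambda_{\langle n\rangle}}^\Lambda$, using the inductive hypothesis on the subtrees $\Lambda^{\langle n\rangle}$ and (A.2) at the root to make the supremum finite. You run the same recursion on node ranks inside the fixed tree, which only spares you from checking that the subtrees are again trees in $\TT$ satisfying (A.2).
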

\begin{proof}
    The proof goes by transfinite induction on $\alpha$. The case $\alpha=0$ is clear.

    Let $\alpha>0$ be a countable ordinal and assume that the conclusion holds for all $\beta<\alpha$ and all tress of rank $\beta+1$. Fix $\Lambda\in\TT_{\alpha+1}$ and let $\olb{a}=(a_s)_{s\in\Lambda}\in\er^\Lambda$ satisfy condition (A.2). By the inductive assumption and Remark \ref{remark:norms_lim} (both used below at the very last equality), we have
    \[\norm{\olb{a}}^\Lambda=\sup_{t\in\Lambda}\bigg|\sum_{s \preceq t} a_s\bigg|=\max\bigg\{\abs{a_\emptyseq},\ \sup_{\substack{t\in\Lambda\\l(t)\ge1}}\bigg|\sum_{s \preceq t} a_s\bigg|\bigg\}\le\]
    \[\le\max\Bigg\{\abs{a_\emptyseq},\ \abs{a_\emptyseq}+\sup_{\substack{t\in\Lambda\\l(t)\ge1}}\Bigg|\sum_{\substack{s \preceq t\\l(s)\ge1}} a_s\Bigg|\Bigg\}=\abs{a_\emptyseq}+\sup_{\substack{t\in\Lambda\\l(t)\ge1}}\Bigg|\sum_{\substack{s \preceq t\\l(s)\ge1}} a_s\Bigg|=\]
    \[=\abs{a_\emptyseq}+\sup_{n\in\omega}\sup_{\substack{t\in\Lambda_{\langle n\rangle}}}\Bigg|\sum_{\substack{s \preceq t\\l(s)\ge1}} a_s\Bigg|=\abs{a_\emptyseq}+\sup_{n\in\omega}\sup_{t\in\Lambda^{\langle n\rangle}}\Bigg|\sum_{s\preceq t}a_{n\concat s}\Bigg|=\]
    \[=\abs{a_\emptyseq}+\sup_{n\in\omega}\norm{(a_{n\concat t})_{t\in\Lambda^{\langle n\rangle}}}^{\Lambda^{\langle n\rangle}}=\abs{a_\emptyseq}+\sup_{n\in\omega}\norm{\olb{a}\rstr\Lambda_{\langle n\rangle}}^\Lambda<\infty.\]
\end{proof}

\begin{lemma}\label{lem:alambda_norm_length}
    Let $\Lambda\in\TT_{\alpha+1}$ for some countable ordinal $\alpha$. If $\olb{a}\in \er^\Lambda$ satisfies condition (A.2), then $\olb{a}$ satisfies condition (A.3).
\end{lemma}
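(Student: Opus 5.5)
We argue by transfinite induction on $\alpha$, in the same spirit as the proof of Lemma \ref{lem:alambda_norm_finite}. For $\alpha=0$ the tree is $\Upsilon=\{\emptyseq\}$, so $\{t\in\Lambda\colon l(t)\ge n\}=\emptyset$ for $n\ge1$ and (A.3) is trivial. Fix $\alpha>0$, assume the claim for all trees of rank $\beta+1$ with $\beta<\alpha$, and let $\Lambda\in\TT_{\alpha+1}$ and $\olb{a}\in\er^\Lambda$ satisfy (A.2). If $\emptyseq$ were a leaf we would have $\alpha=0$, so $\emptyseq\in NL(\Lambda)$ and $\langle k\rangle\in\Lambda$ for all $k\in\omega$. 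Each $\Lambda^{\langle k\rangle}$ is a tree of rank $r(\langle k\rangle)+1$ with $r(\langle k\rangle)<\alpha$; one checks directly from Definition \ref{def:tree} that the hereditariness, the absence of infinite branches, the splitting condition and the monotonicity of ranks are inherited. By Remark \ref{remark:norms_lim}, the system $(a_{\langle k\rangle\concat t})_{t\in\Lambda^{\langle k\rangle}}$ satisfies (A.2). Hence the inductive hypothesis gives (A.3) for each of these subtrees, and Lemma \ref{lem:alambda_norm_finite} gives finiteness of all the norms involved.

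Fix $\eps>0$. By (A.2) applied at $s=\emptyseq$ there is $k_0$ with $\norm{\olb{a}\rstr\Lambda_{\langle k\rangle}}^\Lambda<\eps/2$ for all $k\ge k_0$. For each $k<k_0$, the inductive (A.3) for $\Lambda^{\langle k\rangle}$ yields $n_k$ with $\norm{\olb{a}\rstr\{t\in\Lambda_{\langle k\rangle}\colon l(t)\ge n_k\}}^\Lambda<\eps/2$. Here the shift by one level is harmless, because by Remark \ref{remark:norms_lim} the restriction to $\Lambda_{\langle k\rangle}$ of lengths $\ge n$ corresponds to the restriction in $\Lambda^{\langle k\rangle}$ of lengths $\ge n-1$. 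Let $N=\max_{k<k_0}n_k+1$. For $n\ge N$, write $\Gamma_n=\{t\in\Lambda\colon l(t)\ge n\}$.

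The key step is to estimate $\norm{\olb{a}\rstr\Gamma_n}^\Lambda$. The element $\olb{a}\rstr\Gamma_n$ is supported on the disjoint union over $k$ of the sets $\Gamma_n\cap\Lambda_{\langle k\rangle}$. Every $t\in\Lambda$ with $l(t)\ge1$ lies in exactly one $\Lambda_{\langle k\rangle}$, and its partial sum $\sum_{s\preceq t}$ only sees the branch through $\langle k\rangle$; for $t=\emptyseq$ the partial sum is $0$. Therefore the norm is a supremum over $k$ rather than a sum:
\[\norm{\olb{a}\rstr\Gamma_n}^\Lambda=\sup_{k\in\omega}\norm{\olb{a}\rstr(\Gamma_n\cap\Lambda_{\langle k\rangle})}^\Lambda.\]
For $k<k_0$ the $k$-th term is $<\eps/2$ by the choice of $N$. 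For $k\ge k_0$, since $\Lambda_{\langle k\rangle}\setminus\Gamma_n$ is an initial segment, every partial sum of $\olb{a}\rstr(\Gamma_n\cap\Lambda_{\langle k\rangle})$ is a difference of two partial sums of $\olb{a}\rstr\Lambda_{\langle k\rangle}$. Hence this term is at most $2\norm{\olb{a}\rstr\Lambda_{\langle k\rangle}}^\Lambda<\eps$. Thus $\norm{\olb{a}\rstr\Gamma_n}^\Lambda<\eps$ for $n\ge N$, which is (A.3).

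The main obstacle is only bookkeeping: the identity between the full norm and the supremum over branches $\langle k\rangle$, together with the factor-$2$ estimate for tail restrictions. No deeper difficulty is expected, because ranks strictly decrease along $\langle k\rangle$, so the induction applies to every subtree.
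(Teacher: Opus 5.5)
Your proof is correct and follows essentially the same route as the paper: induction on $\alpha$, the inductive hypothesis for the finitely many subtrees $\Lambda^{\langle k\rangle}$ with $k<k_0$, and (A.2) at $\emptyseq$ for the remaining $k$, using that a tail partial sum inside $\Lambda_{\langle k\rangle}$ is a difference of two partial sums there. The paper argues by contradiction instead, choosing witnesses $t_k$ with pairwise distinct first coordinates, while yours is the direct $\eps$--$N$ version; just make sure each $n_k$ is a threshold valid for all $n\ge n_k$ (which (A.3) provides), since these tail norms need not be monotone in $n$.
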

\begin{proof}
    As previously, the proof goes by transfinite induction on $\alpha$. The case $\alpha=0$ is again clear. 
    
    Let $\alpha>0$ be a countable ordinal and assume that the conclusion holds for all $\beta<\alpha$ and all trees of rank $\beta+1$. Fix $\Lambda\in\TT_{\alpha+1}$ and let $\olb{a}=(a_s)_{s\in\Lambda}\in\er^\Lambda$ satisfy condition (A.2). Assume, for the sake of contradiction, that there is $\eps>0$ such that for every $k\in\en$ there is $n_k\ge k+1$ for which we have
    \[\norm{\olb{a}\rstr\big\{t\in\Lambda\colon\ l(t)\ge n_k\big\}}^\Lambda=\sup_{\substack{t\in\Lambda\\l(t)\ge n_k}}\Bigg|\sum_{\substack{s\preceq t\\l(s)\ge n_k}}a_s\Bigg|>\eps.\]
    For each $k\in\en$ there is $t_k\in\Lambda$ with $l(t_k)\ge n_k$ and such that
    \[\tag{$*$}\Bigg|\sum_{\substack{s\preceq t_k\\l(s)\ge n_k}}a_s\Bigg|>\eps.\]
    As each tree $\Lambda^{\langle l\rangle}$ has rank $<\alpha+1$, by the inductive assumption for every $l\in\en$ there is $m_l\in\en$ such that for every $m\ge m_l$ we have
    \[\norm{\olb{a}\rstr\big\{t\in\Lambda_{\langle l\rangle}\colon\ l(t)\ge m+1\big\}}^\Lambda<\eps,\]
    hence, without loss of generality, we may assume that $t_k(0)\neq t_{k'}(0)$ for each $k\neq k'\in\en$.

    By ($*$), for every $k\in\en$ we have
    \[\norm{\olb{a}\rstr\Lambda_{\langle t_k(0)\rangle}}^\Lambda=\sup_{\substack{t\in\Lambda\\t(0)=t_k(0)}}\Bigg|\sum_{\substack{s\preceq t\\l(s)\ge1}}a_{s}\Bigg|\ge\Bigg|\sum_{\substack{s\preceq t_k\\l(s)\ge1}}a_{s}\Bigg|=
    \Bigg|\sum_{\substack{s\preceq t_k\\1\le l(s)<n_k}}a_{s}+\sum_{\substack{s\preceq t_k\\l(s)\ge n_k}}a_{s}\Bigg|\ge\]\[\ge\Bigg|\sum_{\substack{s\preceq t_k\\l(s)\ge n_k}}a_{s}\Bigg|-\Bigg|\sum_{\substack{s\preceq t_k\\1\le l(s)<n_k}}a_{s}\Bigg|
    >\eps-\Bigg|\sum_{\substack{s\preceq t_k\\1\le l(s)<n_k}}a_{s}\Bigg|.\]
    From the above inequality, since $\lim_{k\to\infty}\norm{\olb{a}\rstr\Lambda_{\langle t_k(0)\rangle}}^\Lambda=0$ by condition (A.2), we get
    \[\liminf_{k\to\infty}\Bigg|\sum_{\substack{s\preceq t_k\\1\le l(s)<n_k}}a_{s}\Bigg|\ge\eps.\]
    But then, we have
    \[0=\lim_{k\to\infty}\norm{\olb{a}\rstr\Lambda_{\langle t_k(0)\rangle}}^\Lambda\ge\liminf_{k\to\infty}\Bigg|\sum_{\substack{s\preceq t_k\\1\le l(s)<n_k}}a_{s}\Bigg|\ge\eps>0,\]
    which is clearly impossible.
\end{proof}

\begin{cor}\label{cor:condition_a2}
    Let $\Lambda\in\TT_{\alpha+1}$ for some countable ordinal $\alpha$. If $\olb{a}\in \er^\Lambda$ satisfies condition (A.2), then $\olb{a}\in \A_\Lambda$.
\end{cor}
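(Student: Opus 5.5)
This is an immediate consequence of Lemmas \ref{lem:alambda_norm_finite} and \ref{lem:alambda_norm_length}. By Definition \ref{def:tree_space}, membership of $\olb{a}\in\er^\Lambda$ in $\A_\Lambda$ means exactly that $\olb{a}$ satisfies the three conditions (A.1), (A.2) and (A.3). So I would fix a countable ordinal $\alpha$, a tree $\Lambda\in\TT_{\alpha+1}$, and an element $\olb{a}\in\er^\Lambda$ satisfying (A.2), and then check the remaining two conditions one at a time.

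First, Lemma \ref{lem:alambda_norm_finite} applies verbatim to $\olb{a}$, since its hypotheses are precisely that $\Lambda\in\TT_{\alpha+1}$ and that $\olb{a}$ satisfies (A.2). It gives $\norm{\olb{a}}^\Lambda<\infty$, which is (A.1).

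Second, Lemma \ref{lem:alambda_norm_length} has the same hypotheses. It gives $\lim_{n\to\infty}\norm{\olb{a}\rstr\{t\in\Lambda\colon l(t)\ge n\}}^\Lambda=0$, which is (A.3).

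Together with the assumed condition (A.2), all three defining conditions of $\A_\Lambda$ hold, so $\olb{a}\in\A_\Lambda$. No step poses a real obstacle, because both lemmas were stated for exactly this hypothesis. The only point to keep in mind is that the argument uses $\Lambda$ having countable rank $\alpha+1$, which guarantees there are no infinite branches. For the full tree $\el$ of rank $\infty$, condition (A.2) alone need not imply (A.1) or (A.3), so the corollary is restricted to trees of countable rank.
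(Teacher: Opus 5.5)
Your proof is correct and is the same as the paper's: the corollary is stated there without a separate proof because it is just Lemma \ref{lem:alambda_norm_finite} (giving (A.1)) and Lemma \ref{lem:alambda_norm_length} (giving (A.3)) combined with the assumed (A.2). Your closing remark is also accurate: for instance, setting $a_s=1$ for every $s$ of the form $\langle 0,\ldots,0\rangle$ (including $\emptyseq$) and $a_s=0$ otherwise gives an element of $\er^{\el}$ that satisfies (A.2) but satisfies neither (A.1) nor (A.3).
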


We are in the position to prove the aforementioned isometric characterization of the Banach spaces $\C([1,\omega^\alpha])$ for countable ordinals $\alpha$ in terms of the tree spaces $\A_\Lambda$, where $\Lambda\in\TT_{\alpha+1}$.

\begin{thm}
\label{A_lambda is C(omega^alpha)}
Let $\alpha$ be a countable ordinal, $m \in \en$, and $\Lambda \in\TT_{\alpha+1}$. Then, $\A_{\Lambda}^m$  is a Banach lattice which is lattice isometric to $\C([1, \omega^{\alpha} m])$. 

In fact, there exist a surjective lattice isometry $T \colon \A_\Lambda^m \rightarrow \C([1, \omega^{\alpha} m])$ and a homeomorphism $\rho\colon\Sigma_m(\Lambda) \rightarrow [1, \omega^{\alpha}m]$ such that the equality
\[\tilde{\delta}_{s, i}\big(\tilde{\chi}_{t, j}\big)=\big\langle \delta_{\rho_{s, i}}, T\big(\tilde{\chi}_{t, j}\big) \big\rangle\]
holds for every $(s, i), (t, j) \in \Sigma_m(\Lambda)$.
\end{thm}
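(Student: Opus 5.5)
The plan is to reduce to $m=1$ and then define $T$ by evaluation along branches, with $\rho$ a homeomorphism supplied by the Mazurkiewicz--Sierpi\'nski theorem. For the reduction, note that $\A_\Lambda^m$ carries the max norm and the coordinatewise order. Likewise $\C([1,\omega^\alpha m])$ is lattice isometric to $\big(\bigoplus_{i=1}^m \C([1,\omega^\alpha])\big)_\infty$, since $[1,\omega^\alpha m]$ is the topological sum of $m$ clopen copies of $[1,\omega^\alpha]$. So once $T$ and $\rho$ exist for $m=1$, taking them coordinatewise gives the general statement.

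For $m=1$, identify $\Lambda$ with a topological space via the embedding $R$; it is a countable compact space of height $\alpha+1$. To see this, first check that $R[\Lambda]$ is closed in $2^\omega$. A limit of points $R(t_k)$ is either some $R(s)$, by Lemma \ref{lemma:tree_topology}, or it would require an infinite branch of $\Lambda$, which does not exist. Next, show by induction on the rank, using Lemma \ref{lemma:tree_topology}, that $s\in\Lambda^{(\beta)}$ if and only if $r(s)\ge\beta$. Here the conditions on $\Lambda$ (all successors $s\concat n$ present, ranks constant or strictly increasing) are what give this. Hence $\Lambda^{(\alpha)}=\{\emptyseq\}$, and the Mazurkiewicz--Sierpi\'nski theorem gives a homeomorphism $\rho\colon\Lambda\to[1,\omega^\alpha]$.

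Now define, for $\olb{a}\in\A_\Lambda$, the function $f_{\olb a}(s)=\tilde\delta_s(\olb a)=\sum_{t\preceq s}a_t$ on $\Lambda$, and set $T\olb a=f_{\olb a}\circ\rho^{-1}$. The required identity $\tilde\delta_s(\tilde\chi_t)=\langle\delta_{\rho(s)},T\tilde\chi_t\rangle$ then holds by definition. By construction $\norm{T\olb a}_\infty=\norm{\olb a}^\Lambda$ and $(T\olb a)^+=T(\olb a^+)$, using Lemma \ref{lem:norm of the positive part}. So $T$ is PNPP, and Lemma \ref{lem:ordered normed spaces and PNPP} turns a surjective $T$ into a lattice isometry, which also shows that $\A_\Lambda$ is complete. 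What remains is continuity and surjectivity.

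\textbf{Continuity of $f_{\olb a}$.} By Lemma \ref{lemma:tree_topology}, $t_k\to s$ means that eventually $s\concat n_k\preceq t_k$ with $n_k\to\infty$. Then $|f_{\olb a}(t_k)-f_{\olb a}(s)|\le\norm{\olb a\rstr\Lambda_{s\concat n_k}}^\Lambda\to0$ by (A.2).

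\textbf{Surjectivity.} Given $g\in\C(\Lambda)$, define $a_\emptyseq=g(\emptyseq)$ and $a_s=g(s)-g(pred(s))$ for $s\ne\emptyseq$; then $\sum_{t\preceq s}a_t=g(s)$ by telescoping. For (A.2), fix $s\in NL(\Lambda)$. Then $\norm{\olb a\rstr\Lambda_{s\concat n}}^\Lambda=\sup_{t\succeq s\concat n}|g(t)-g(s)|$. This tends to $0$ because, by Lemma \ref{lemma:tree_topology}, every sequence $t_n\succeq s\concat n$ converges to $s$, and $g$ is continuous; arguing by contradiction with a bad sequence gives the uniform statement. Corollary \ref{cor:condition_a2} then yields $\olb a\in\A_\Lambda$.

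The main obstacle is the topological step: verifying closedness of $R[\Lambda]$ in $2^\omega$ and the identification of Cantor--Bendixson derivatives with tree ranks. This is exactly where the structural conditions of Definition \ref{def:tree} are needed, so I would do that induction carefully. Everything else is telescoping sums plus the two lemmas quoted above.
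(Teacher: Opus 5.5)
Your proof is correct, and it takes a genuinely different route from the paper.

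\textbf{The paper's route.} The paper argues by transfinite induction on $\alpha$, treating successor and limit cases separately. It takes the inductively given isometries for the subtrees $\Lambda^{\langle n\rangle}$ and transports them onto consecutive clopen blocks of $[1,\omega^\alpha]$. It then sets $T(\olb{a})=a_\emptyseq\chi_{[1,\omega^\alpha]}+\sum_n g_n$ and checks PNPP via Lemma \ref{lem:sup_max_0} together with an external lemma on $\norm{(a\chi+\sum_n g_n)^+}$. Surjectivity comes from an external unique-decomposition lemma, and $\rho$ is built recursively alongside $T$.

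\textbf{Your route.} You move all the induction into topology. You identify $\Lambda^{(\beta)}$ with $\{s\colon r(s)\ge\beta\}$, which is exactly where the ``all successors present'' and ``constant or strictly increasing ranks'' clauses of Definition \ref{def:tree} are needed. Mazurkiewicz--Sierpi\'nski then gives $\Lambda\cong[1,\omega^\alpha]$; this fills in the ``easy inductive argument'' the paper only asserts in the preliminaries. After that, $T$ is just the partial-sum map $\olb{a}\mapsto\big(s\mapsto\tilde\delta_s(\olb{a})\big)$ composed with $\rho^{-1}$. This is the same template the paper itself uses for $\A_\el\cong\C(2^\omega)$.

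\textbf{Comparison.} Your approach gives one uniform argument with no successor/limit split and no reliance on external lemmas. Norm, positive parts and order are preserved tautologically, because they are defined on $\A_\Lambda$ through partial sums. The duality identity holds for any homeomorphism $\rho$. The paper's construction, in exchange, produces an explicit, canonical $\rho$ adapted to the block structure of $[1,\omega^\alpha]$. Yours is only as explicit as the Mazurkiewicz--Sierpi\'nski homeomorphism, which is all the statement requires.

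\textbf{One correction.} Closedness of $R[\Lambda]$ in $2^\omega$ does not follow from Lemma \ref{lemma:tree_topology}, since that lemma only describes limits that already lie in $\el$. Check it directly in $2^\omega$ instead. Let $x$ be a limit of points $R(t_k)$ with $t_k\in\Lambda$.
\begin{itemize}
\item If $x$ has finitely many zeros, then $x=R(s)$ for some $s\in\el$. Since $Q(s)\prec R(t_k)$ eventually, you get $s\preceq t_k$, hence $s\in\Lambda$ by heredity.
\item If $x$ has infinitely many zeros, it codes some $u\in\omega^\omega$. For each $n$, eventually $Q(u\rstr n)\prec R(t_k)$, which forces $u\rstr n\preceq t_k$. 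So all initial segments of $u$ lie in $\Lambda$, giving an infinite branch, which is impossible.
\end{itemize}
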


\begin{proof}
Clearly, it is enough to consider only the case when $m=1$.

By Lemma \ref{A_lambda is a normed lattice}, $\A_\Lambda$ is a normed lattice. To prove the first part of the theorem, it is enough by Lemma \ref{lem:ordered normed spaces and PNPP} to show that there exists a surjective PNPP operator $T \colon \A_{\Lambda}\to \C([1,\omega^\alpha])$.

We proceed by transfinite induction on $\alpha$. The case $\alpha=0$ is clear. Thus, let $\alpha>0$ be a countable ordinal and assume that the conclusion holds for all $\beta<\alpha$. Let $\Lambda\in\TT_{\alpha+1}$. 

Assume first that $\alpha=\beta+1$ for some countable ordinal $\beta$. For each $n \in \omega$ the tree $\Lambda^{\langle n\rangle}$ has rank $\beta+1$, so we use the inductive assumption to find a surjective lattice isometry
$T_n\colon \A_{\Lambda^{\langle n\rangle}}\to \C([1, \omega^{\beta}])$ 
and a homeomorphism $\rho_n \colon \Lambda^{\langle n\rangle} \rightarrow [1, \omega^{\beta}]$ such that 
\[\tilde{\delta}_{s}(\tilde{\chi}_{t})=\big\langle \delta_{\rho_{n}(s)}, T_n(\tilde{\chi}_{t}) \big\rangle\]
for every $s,t \in \Lambda$. We moreover fix for each $n \in \omega$ the canonical surjective lattice isometry 
\[S_n\colon\C([1, \omega^{\beta}])\to\C\big(\big[\omega^{\beta}n+1,\ \omega^{\beta}(n+1)\big]\big),\]
that is, $S_n$ is induced by the inverse mapping $\xi_n^{-1}$ to the natural homeomorphism $\xi_n\colon[1, \omega^{\beta}]\to\big[\omega^{\beta}n+1,\ \omega^{\beta}(n+1)\big]$. Notice that, for each $n \in \omega$ and $s, t \in \Lambda^{\langle n\rangle}$, we have 
\[\tilde{\delta}_s(\tilde{\chi}_t)=\big\langle\delta_{\rho_n(s)}, T_n(\tilde{\chi}_t) \big\rangle=\big\langle \delta_{(\xi_n \circ \rho_n)(s)}, (S_n \circ T_n)(\tilde{\chi}_t) \big\rangle.\]

Now, we define the desired lattice isometry $T\colon \A_{\Lambda}\to\C([1, \omega^{\alpha}])$ as follows. Let $\olb{a}=(a_s)_{s \in \Lambda} \in \A_{\Lambda}$. Using the inductive assumption, for each $n\in\omega$ we find a function $g_n \in \C\big(\big[\omega^{\beta}n+1, \omega^{\beta}(n+1)\big]\big)$
such that
\[g_n=S_n\big(T_n\big((a_{n\concat s})_{s \in \Lambda^{\langle n\rangle}}\big)\big),\]
so in particular we have $\norm{g_n}=\norm{(a_{n\concat s})_{s \in \Lambda^{\langle n\rangle}}}^{\Lambda^{\langle n\rangle}}$. Then, we define 
\[f=a_{\emptyseq} \chi_{[1, \omega^{\alpha}]}+\sum_{n=0}^{\infty} g_n.\]
Clearly, $f$ is a well-defined function from $[1,\omega^{\alpha}]$ into $\er$. Since for each $m \in \omega$ we have 
\[\norm{\sum_{n=m+1}^{\infty} g_n} = \sup_{n \geq m+1} \norm{g_n}=\sup_{n\ge m+1}\norm{(a_{n\concat s})_{s \in \Lambda^{\langle n\rangle}}}^{\Lambda^{\langle n\rangle}}=\sup_{n \geq m+1}\norm{\olb{a}\rstr\Lambda_{\langle n\rangle}}^{\Lambda},\]
by condition (A.2) we get
\[\lim_{m\to\infty}\norm{\sum_{n=m+1}^{\infty} g_n}=0,\]
and so it follows that $f$ is the uniform limit of functions of the form
\[a_{\emptyseq} \chi_{[1, \omega^{\alpha}]}+\sum_{n=0}^{m} g_n,\]
where $m\in\omega$. Consequently, $f$ is a well-defined continuous function on $[1, \omega^{\alpha}]$. We then set $T(\olb{a})=f$. Note that by the computation as above we have
\[\norm{T(\olb{a})}=\norm{f}\le\abs{a_\emptyseq}+\sup_{n\in\omega}\norm{\olb{a}\rstr\Lambda_{\langle n\rangle}}^{\Lambda}\le 2\abs{a_\emptyseq}+\norm{\olb{a}}^\Lambda\le3\norm{\olb{a}}^\Lambda.\]
It follows that $T$ is a well-defined bounded operator mapping $\A_\Lambda$ into $\C([1,\omega^\alpha])$. We need to show that $T$ is surjective and PNPP.

\medskip

We first check that $T$ is PNPP. Fix $\olb{a}=(a_s)_{s\in\Lambda}\in \A_\Lambda$ and let $(g_n)_{n\in\omega}$ be the sequence of functions as in the definition of $T(\olb{a})$. We show that $\norm{T(\olb{a})^+}=\norm{\olb{a}^+}^\Lambda$. To this end, by \cite[Lemma 3.6]{Rondos-Sobota_copies_of_separable_C(L)} (1), the inductive assumption (2), Lemma \ref{lem:norm of the positive part} (3), and Lemma \ref{lem:sup_max_0} (4), we have 
\[\norm{T(\olb{a})^+}\overset{(1)}{=}\max\left\{a_\emptyseq+\sup_{n \in \omega}\norm{g_n^+},\ 0\right\} 
=\max\left\{a_\emptyseq+\sup_{n \in \omega}\norm{S_n^{-1}(g_n)^+},\ 0\right\}=\]
\[\overset{(2)}{=}\max\left\{a_\emptyseq+\sup_{n \in \omega}\norm{T_n^{-1}\big(S_n^{-1}(g_n)\big)^+}^{\Lambda^{\langle n\rangle}},\ 0\right\}=\]
\[=\max\Bigg\{a_\emptyseq+\sup_{n \in \omega}\norm{\big(a_{n\concat s}\big)^+_{s \in \Lambda^{\langle n\rangle}}}^{\Lambda^{\langle n\rangle}},\ 0\Bigg\}=\]
\[\overset{(3)}{=}\max\Bigg\{a_\emptyseq+\sup_{n \in \omega}\sup_{t\in \Lambda^{\langle n\rangle}}\bigg(\sum_{s \preceq t} a_{n\concat s}\bigg)^+,\ 0\Bigg\}=\]
\[=\max\Bigg\{a_{\emptyseq}+\sup_{n \in \omega}\sup_{t\in \Lambda_{\langle n\rangle}}\bigg(\sum_{\langle n\rangle \preceq s \preceq t} a_s\bigg)^+,\ 0\Bigg\}=\]
\[=\left(a_{\emptyseq}+\sup_{n \in \omega}\sup_{t\in \Lambda_{\langle n\rangle}}\bigg(\sum_{\langle n\rangle \preceq s \preceq t} a_s\bigg)^+\right)^+=\]
\[\overset{(4)}{=}\sup_{t\in\Lambda}\bigg(\sum_{s \preceq t} a_s\bigg)^+\overset{(3)}{=}\norm{\olb{a}^+}^{\Lambda}.\]
It follows that $T$ is indeed PNPP.

\medskip

We now show that $T$ is surjective. Given a function $f \in \C([1, \omega^{\alpha}])$, we write $f$ in the unique way in the form 
\[f=a \chi_{[1, \omega^{\alpha}]}+\sum_{n=0}^{\infty} g_n,\]
where $a \in \er$, and the functions $g_n \in \C\big(\big[\omega^{\beta}n+1,\ \omega^{\beta}(n+1)\big]\big)$ are such that $(\norm{g_n})_{n \in \omega} \in c_0$, see \cite[Lemma 3.7]{Rondos-Sobota_copies_of_separable_C(L)}. For each $n\in\omega$, let 
\[\big(b^n_s\big)_{s \in \Lambda^{\langle n\rangle}}=T_n^{-1}\big(S_n^{-1}(g_n)\big),\]
so $\big(b^n_s\big)_{s \in \Lambda^{\langle n\rangle}}\in \A_{\Lambda^{\langle n\rangle}}$ and $\norm{g_n}=\norm{\big(b^n_s\big)_{s \in \Lambda^{\langle n\rangle}}}^{\Lambda^{\langle n\rangle}}$. Now, let $\olb{a}=(a_s)_{s \in \Lambda}$ be the element of $\er^\Lambda$ defined for every $s\in\Lambda$ by the formula:
\[a_s=\begin{cases}
    a,&\text{ if }s=\emptyseq,\\
    b^n_{sh(s)},&\text{ if }\langle n\rangle\preceq s\text{ for some }n\in\omega.
\end{cases}
\]

We check that $\olb{a}\in \A_\Lambda$. 
By Corollary \ref{cor:condition_a2}, it is enough to prove that $\olb{a}$ satisfies condition  (A.2), that is, that for each $s \in NL(\Lambda)$ we have
\[\lim_{n \rightarrow \infty} \norm{(a_t)_{t\in\Lambda}\rstr\Lambda_{s^\concat n}}^{\Lambda}=0.\]
It is however enough to check this equality for the case $s=\emptyseq$, since the other cases follow from the inductive assumption, as for any $k\in\omega$ and nonleaf $s\in\Lambda_{\langle k\rangle}$ we have
\[\lim_{n \rightarrow \infty} \norm{(a_t)_{t\in\Lambda}\rstr\Lambda_{s\concat n}}^{\Lambda}=\lim_{n \rightarrow \infty} \norm{\big(b_t^k\big)_{t\in\Lambda^{\langle k\rangle}}\rstr\big(\Lambda^{\langle k\rangle}\big)_{sh(s)\concat n}}^{\Lambda^{\langle k\rangle}}=0.\]
To this end, we have
\begin{equation}
\nonumber
\begin{aligned}
\lim_{n \rightarrow \infty} \norm{(a_t)_{t\in\Lambda}\rstr\Lambda_{\langle n\rangle}}^{\Lambda}=\lim_{n \rightarrow \infty} \norm{\big(b_t^n\big)_{t\in\Lambda^{\langle n\rangle}}}^{\Lambda^{\langle n\rangle}}=\lim_{n \rightarrow \infty} \norm{g_n}=0,
\end{aligned}
\end{equation}
as $(\norm{g_n})_{n \in \omega} \in c_0$, so condition (A.2) indeed holds. Consequently, $\olb{a}\in \A_\Lambda$.

By the construction of $T(\olb{a})$ and the uniqueness of the form of $f$ (see \cite[Lemma 3.7]{Rondos-Sobota_copies_of_separable_C(L)}), we have $T(\olb{a})=f$, and so $T$ is a surjection.

\medskip

Finally, we define the mapping $\rho \colon \Lambda \rightarrow [1, \omega^{\alpha}]$ by setting
$\rho(\emptyseq)=\omega^{\alpha}$, and $\rho(s)=\rho_n(sh(s))$ if $s \in \Lambda_{\langle n\rangle}$. Then, it is simple to prove using Lemma \ref{lemma:tree_topology} that $\rho$ is a homeomorphism. Also, the required equality
\[\tilde{\delta}_{s}(\tilde{\chi}_{t})=\big\langle \delta_{\rho({s})}, T(\tilde{\chi}_{t}) \big\rangle\]
for $s,t\in\Lambda$ can be checked by an easy argument appealing to the inductive assumption and having in mind that $\tilde{\delta}_s(\tilde{\chi}_t)$ equals to $1$ if $t\preceq s$, and $0$ otherwise.

\bigskip

Assume now that $\alpha$ is a limit ordinal. We proceed similarly as before. For each $n \in \omega$ let $\beta_n=r(\langle n\rangle)$, and note that $(\beta_n)_{n\in\omega}$ is a (strictly increasing) sequence of ordinals converging to $\alpha$. 
For each $n\in\omega$, as the tree $\Lambda^{\langle n\rangle}$ has rank $\beta_n+1$, by the inductive assumption we can find a surjective lattice isometry $T_n\colon \A_{\Lambda^{\langle n\rangle}}\to\C([1, \omega^{\beta_n}])$ and a homeomorphism $\rho_n \colon \Lambda^{\langle n\rangle}\to[1, \omega^{\beta_n}]$ such that 
\[\tilde{\delta}_{s}(\tilde{\chi}_{t})=\big\langle \delta_{\rho_{n}(s)}, T_n(\tilde{\chi}_{t}) \big\rangle\]
for every $s,t \in \Lambda^{\langle n\rangle}$. Also, for each $n\ge 1$, by the Mazurkiewicz--Sierpi\'{n}ski theorem, there exists a homeomorphism
\[\xi_n \colon [1,\omega^{\beta_n}] \rightarrow \big[\omega^{\beta_{n-1}}+1, \omega^{\beta_n}\big];\]
let
\[S_n\colon\C([1, \omega^{\beta_n}])\rightarrow \C\big(\big[\omega^{\beta_{n-1}}+1, \omega^{\beta_n}\big]\big) \]
be the surjective lattice isometry induced by $\xi_n^{-1}$.
Also, let
\[\xi_0 \colon [1, \omega^{\beta_0}]\rightarrow [1, \omega^{\beta_0}]\]
and
\[S_0\colon\C([1, \omega^{\beta_0}]) \rightarrow \C([1, \omega^{\beta_0}])\]
be the identity mappings. Notice that, for each $n \in \omega$ and $s, t \in \Lambda^{\langle n\rangle}$, we have 
\[\tilde{\delta}_s(\tilde{\chi}_t)=\big\langle\delta_{\rho_n(s)}, T_n(\tilde{\chi}_t) \big\rangle=\big\langle \delta_{(\xi_n \circ \rho_n)(s)}, (S_n \circ T_n)(\tilde{\chi}_t) \big\rangle.\]

Let $\olb{a}=(a_s)_{s \in \Lambda} \in \A_{\Lambda}$. Using the inductive assumption,  we find functions $g_0\in\C([1,\omega^{\beta_0}])$ and $g_n \in \C\big(\big[\omega^{\beta_{n-1}}+1, \omega^{\beta_n}\big]\big)$ for $n\ge 1$, such that
\[g_n=S_n\big(T_n\big((a_{n\concat s})_{s \in \Lambda^{\langle n\rangle}}\big)\big),\]
so in particular we have $\norm{g_n}=\norm{(a_{n\concat s})_{s \in \Lambda^{\langle n\rangle}}}^{\Lambda^{\langle n\rangle}}$. Then, we define 
\[T(\olb{a})=a_{\emptyseq} \chi_{[1, \omega^{\alpha}]}+\sum_{n=0}^{\infty} g_n.\]
The reasoning that the mapping $T\colon\C([1, \omega^{\alpha}]) \rightarrow \A_{\Lambda}$
is the required surjective PNPP operator and $\rho \colon \Lambda \rightarrow [1, \omega^{\alpha}]$ is the required homeomorphism is now basically the same as in the successor case.
%
\end{proof}

\begin{cor}
    For any countable ordinal $\alpha$ and any pair of trees $\Lambda,\Lambda'\in\TT_{\alpha+1}$, the Banach spaces $\A_\Lambda$ and $\A_{\Lambda'}$ are isometrically isomorphic.
\end{cor}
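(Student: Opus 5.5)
The corollary follows by passing through the common model space $\C([1,\omega^\alpha])$. Fix a countable ordinal $\alpha$ and trees $\Lambda,\Lambda'\in\TT_{\alpha+1}$. Apply Theorem \ref{A_lambda is C(omega^alpha)} with $m=1$ to $\Lambda$. This gives a surjective lattice isometry $T\colon\A_\Lambda\to\C([1,\omega^\alpha])$. Applying the same theorem to $\Lambda'$ gives a surjective lattice isometry $T'\colon\A_{\Lambda'}\to\C([1,\omega^\alpha])$.

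Both $T$ and $T'$ are linear, surjective and norm-preserving, so $T'$ is invertible and $(T')^{-1}\colon\C([1,\omega^\alpha])\to\A_{\Lambda'}$ is again a surjective (lattice) isometry. We then set
\[S=(T')^{-1}\circ T\colon\A_\Lambda\to\A_{\Lambda'}.\]
As a composition of linear surjective isometries, $S$ is a surjective linear isometry. This is exactly the required isometric isomorphism. In fact $S$ is even a lattice isometry, because both factors preserve the lattice operations.

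There is no genuine obstacle here, since all of the work is contained in Theorem \ref{A_lambda is C(omega^alpha)}. The only point to note is that the target space $\C([1,\omega^\alpha])$ depends solely on $\alpha$ and not on the particular tree. This holds because the theorem is stated for an arbitrary $\Lambda\in\TT_{\alpha+1}$, with the same $\alpha$ determining the ordinal interval.
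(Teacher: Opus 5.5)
Your proof is correct and is essentially the argument the paper intends. The paper states this corollary without proof directly after Theorem~\ref{A_lambda is C(omega^alpha)}: both $\A_\Lambda$ and $\A_{\Lambda'}$ are lattice isometric to $\C([1,\omega^\alpha])$, so composing one isometry with the inverse of the other gives the result.
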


Theorem \ref{A_lambda is C(omega^alpha)} quickly yields the (isometrically) isomorphic description of spaces $\C(K)$ for \textit{all} countable compact spaces $K$.

\begin{cor}
    For any countable compact space $K$ there exists a tree $\Lambda$ of rank $ht(K)$ such that $\C(K)$ is  isomorphic to the space $\A_\Lambda$ and isometrically isomorphic to the space $\A_\Lambda^m$, where $m=\abs{K^{(ht(K)-1)}}$.
\end{cor}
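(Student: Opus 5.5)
By the Mazurkiewicz--Sierpi\'{n}ski classification recalled in Section~\ref{sec:prelim}, we may replace $K$ by an ordinal interval. Put $\alpha=ht(K)-1$, which makes sense because $K$ is a countable compact, hence scattered, space and so $ht(K)$ is a countable successor ordinal. Put $m=\abs{K^{(\alpha)}}$, which is a finite positive integer. The classification then gives a homeomorphism $K\cong[1,\omega^\alpha m]$, and this homeomorphism induces a surjective lattice isometry $\C(K)\cong\C([1,\omega^\alpha m])$.

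Next we choose a tree $\Lambda\in\TT_{\alpha+1}$; note that $\alpha+1=ht(K)$, so $\Lambda$ has rank $ht(K)$. Such a tree exists for every countable $\alpha$, and we build it by transfinite recursion.
\begin{itemize}
\item For $\alpha=0$, take $\Lambda=\Upsilon$.
\item For a successor $\alpha=\beta+1$, take a tree $\Lambda'\in\TT_{\beta+1}$ and let $\Lambda=\{\emptyseq\}\cup\{n\concat t\colon n\in\omega,\ t\in\Lambda'\}$. The children of the root then have constant rank $\beta$, and the root has rank $\alpha$.
\item For a limit $\alpha$, pick a strictly increasing sequence $\beta_n\nearrow\alpha$ and trees $\Lambda_n\in\TT_{\beta_n+1}$, and let $\Lambda=\{\emptyseq\}\cup\{n\concat t\colon t\in\Lambda_n\}$. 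The children's ranks $\beta_n$ are strictly increasing with supremum $\alpha$, so $r(\emptyseq)=\sup_n(\beta_n+1)=\alpha$.
\end{itemize}
In every case the remaining clauses of Definition~\ref{def:tree} (hereditary, no infinite branches, each non-leaf has all children $s\concat n$, children's ranks constant or strictly increasing) are inherited from the subtrees and are immediate at the root.

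Theorem~\ref{A_lambda is C(omega^alpha)} now gives a surjective lattice isometry $\A_\Lambda^m\cong\C([1,\omega^\alpha m])\cong\C(K)$, which is the isometric part of the claim.

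For the isomorphic part, $\A_\Lambda^m$ is isomorphic to the finite sum $\A_\Lambda\oplus\cdots\oplus\A_\Lambda$ with $m$ summands. Since $\A_\Lambda\cong\C([1,\omega^\alpha])$, it remains to check that $\C([1,\omega^\alpha])^m\simeq\C([1,\omega^\alpha])$. For $\alpha=0$ this is simply $\er^m$ versus $\er$, which are not isomorphic unless $m=1$. But when $\alpha=0$ we have $K$ finite with $ht(K)=1$ and $m=\abs{K}$, so the isomorphic statement with $\A_\Upsilon\cong\er$ would fail for $m>1$. Thus in this degenerate case one must read the claim up to the isometric statement, or note that $\C(K)\cong\er^m\cong\A_\Upsilon^m$.

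For $\alpha\ge1$ the isomorphism is classical. Write $[1,\omega^\alpha m]$ as the disjoint union of $m$ clopen copies of $[1,\omega^\alpha]$. Then split off one isolated point $x$ from a single copy, using $\C(L)\cong\C(L\setminus\{x\})\oplus\er$ (valid since $x$ is isolated) and the fact that removing an isolated point preserves the homeomorphism type when infinitely many isolated points remain. This reduces the claim to $\C([1,\omega^\alpha])\oplus\C([1,\omega^\alpha])\simeq\C([1,\omega^\alpha])$. A direct way to see that: with $x=\omega^\alpha$, the subspace $\C_0([1,\omega^\alpha])=\{f\colon f(\omega^\alpha)=0\}$ is complemented of codimension one, and $\C_0([1,\omega^\alpha])$ is the $c_0$-sum of the spaces $\C$ of the blocks of $[1,\omega^\alpha)$. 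That $c_0$-sum absorbs squares by re-indexing the blocks.

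The only real obstacle is the case $\alpha=0$, where the isomorphic statement needs the caveat above. The rest is assembly of Theorem~\ref{A_lambda is C(omega^alpha)} with classical facts.
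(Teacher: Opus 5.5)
Your proposal is correct. The isometric part is the paper's argument: Mazurkiewicz--Sierpi\'nski gives $K\cong[1,\omega^\alpha m]$, and Theorem~\ref{A_lambda is C(omega^alpha)} (with order $m$) yields $\C(K)\cong\A_\Lambda^m$. Your recursive construction of a tree in $\TT_{\alpha+1}$ supplies an existence fact the paper takes for granted.

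You diverge in the isomorphic part. The paper simply cites the Bessaga--Pe\l czy\'nski classification to get $\C([1,\omega^\alpha m])\simeq\C([1,\omega^\alpha])$. You instead prove $X\oplus X\simeq X$ for $X=\C([1,\omega^\alpha])$ by hand, using $X\simeq\C_0\oplus\er$, $\C_0\oplus\C_0\cong\C_0$ and $X\oplus\er\cong X$, where $\C_0=\C_0([1,\omega^\alpha])$. The citation is shorter. Your route is self-contained, needs nothing beyond Mazurkiewicz--Sierpi\'nski (which the paper already uses), and makes visible that $\alpha\ge1$ is required.

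On that last point you are right. For finite $K$ with $\abs{K}\ge2$, the only tree of rank $ht(K)=1$ is $\Upsilon$, and $\A_\Upsilon\cong\er\not\simeq\er^{\abs{K}}\cong\C(K)$. So the isomorphic clause of the statement needs $K$ infinite. The paper's proof tacitly assumes this, since it appeals to a classification of spaces $\C([1,\gamma])$ for infinite $\gamma$.

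Two details to tighten:
\begin{itemize}
\item The reduction from $X^m$ to $X\oplus X$ is just induction on $m$. The isolated-point fact is not needed there; it is needed at the end, to absorb the extra line in $X\oplus X\simeq\C_0\oplus\C_0\oplus\er^2\simeq X\oplus\er\simeq X$.
\item ``Re-indexing the blocks'' is literal only for successor $\alpha$. For limit $\alpha$ the blocks $\C([1,\omega^{\beta_n}])$ have pairwise different heights, so literal re-indexing does not apply. A uniform fix: the one-point compactification of $[1,\omega^\alpha)\sqcup[1,\omega^\alpha)$ is a countable compact space of height $\alpha+1$ whose $\alpha$-th derivative is the single point at infinity. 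By Mazurkiewicz--Sierpi\'nski it is therefore homeomorphic to $[1,\omega^\alpha]$, via a map sending that point to $\omega^\alpha$. This gives $\C_0\oplus\C_0\cong\C_0$ isometrically for every $\alpha\ge1$ at once.
\end{itemize}
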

\begin{proof}
    By the Mazurkiewicz--Sierpi\'{n}ski theorem, $K$ is homeomorphic to the space $[1,\omega^\alpha m]$, where $\alpha=ht(K)-1$ and $m=\abs{K^{(\alpha)}}$. Thus, $K$ is homeomorphic to the disjoint union of $m$ copies of the space $[1,\omega^\alpha]$, and so the space $\C(K)$ is isometrically isomorphic to the sum space $\big(\bigoplus_{i=1}^m\C([1,\omega^\alpha])\big)_\infty$. By Theorem \ref{A_lambda is C(omega^alpha)} we get the required isometry from $\C(K)$ onto $\A_\Lambda^m$. For the second isomorphism, note that from the Bessaga--Pe\l czy\'{n}ski isomorphic classification of the spaces of the form $\C([1,\gamma])$ for countable ordinals $\gamma$ (\cite[Theorem 1]{BessagaPelcynski_classification}) it follows that $\C([1,\omega^\alpha m])$ is isomorphic to $\C([1,\omega^\alpha])$ and again appeal to Theorem \ref{A_lambda is C(omega^alpha)}.
\end{proof}

\subsection{Isometry between the spaces $\A_{\el}$ and $\C(2^\omega)$\label{sec:el_cantor}}

We will now demonstrate that the spaces $\A_{\el}$ and $\C(2^\omega)$ are isometrically isomorphic. 

Recall that the linear subspace $\mathcal{FS}(\A_\el)$ of $\A_\el$, consisting of all finitely supported elements, is norm-dense in $\A_\el$ (Lemma \ref{lem:fin-approx}). 

\begin{lemma}
\label{fin-norm} The mapping $\widehat{T}\colon\mathcal{FS}(\A_\el)\to\C(2^\omega)$, given for every $(a_s)_{s\in\el}\in\mathcal{FS}(\A_\el)$ by the formula
\[\widehat{T}\big((a_s)_{s\in\el}\big)=\sum_{s \in\el} a_s\chi_{[Q(s)]},\]
is a PNPP isometry of $\mathcal{FS}(\A_\el)$ into $\C(2^\omega)$.
\end{lemma}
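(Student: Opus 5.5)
The plan is to compute $\widehat{T}(\olb{a})$ pointwise and read off both the PNPP property and the isometry from Lemma \ref{lem:norm of the positive part} and Lemma \ref{lem:ordered normed spaces and PNPP}(i). Linearity and well-definedness are immediate, since for finitely supported $\olb{a}$ the sum is finite and each $\chi_{[Q(s)]}$ is continuous, $[Q(s)]$ being clopen. Fix $\olb{a}=(a_s)_{s\in\el}$ supported by a finite set $F$. Without loss of generality $F$ is a trunk, because we may enlarge it by adding all predecessors of its elements.

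The key structural facts concern the sets $[Q(s)]$. First, $s\preceq t$ implies $Q(s)\preceq Q(t)$, hence $[Q(t)]\subseteq[Q(s)]$. Second, for incomparable $s,t$ the sets $[Q(s)]$ and $[Q(t)]$ are disjoint, as noted after the definition of $R$. Consequently, for each $u\in 2^\omega$ the set $\{s\in\el\colon u\in[Q(s)]\}$ is a chain containing $\emptyseq$, since $Q(\emptyseq)=\emptyseq$ gives $[Q(\emptyseq)]=2^\omega$. Hence its intersection with the trunk $F$ has a $\preceq$-maximal element $t_u\in F$, and
\[\widehat{T}(\olb{a})(u)=\sum_{s\preceq t_u}a_s.\]
Conversely, every $t\in\el$ is realised as some $t_u$: take $u=R(t)$. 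Then $u\in[Q(t)]$, and for any $t'\succ t$ we have $Q(t')$ extending $Q(t)$ with a block of $1$'s followed by a $0$. But $R(t)$ is constantly $1$ after $Q(t)$, so $R(t)\notin[Q(t')]$. Thus the chain of $s$ with $R(t)\in[Q(s)]$ is exactly $\{s\colon s\preceq t\}$, and $\widehat{T}(\olb{a})(R(t))=\sum_{s\preceq t}a_s$. This holds for all $t\in\el$, not just $t\in F$. For $t\notin F$ this value coincides with the one at the longest initial segment of $t$ lying in $F$, since $\olb{a}$ vanishes off $F$.

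From these two facts, the range of $\widehat{T}(\olb{a})$ is exactly $\{\sum_{s\preceq t}a_s\colon t\in\el\}$. Therefore
\[\norm{\widehat{T}(\olb{a})^+}=\sup_{u}\big(\widehat{T}(\olb{a})(u)\big)^+=\sup_{t\in\el}\bigg(\sum_{s\preceq t}a_s\bigg)^+=\norm{\olb{a}^+}^{\el},\]
where the last equality is Lemma \ref{lem:norm of the positive part}(2). So $\widehat{T}$ is PNPP. Both $\mathcal{FS}(\A_\el)$, as a sublattice-normed space by Lemma \ref{A_lambda is a normed lattice} and Lemma \ref{lem:norm of the positive part}(2), and $\C(2^\omega)$ satisfy $\norm{x}=\max\{\norm{x^+},\norm{x^-}\}$. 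One caveat is that $\mathcal{FS}(\A_\el)$ must be closed under $\vee$; this follows from the formula for $\olb{a}\vee\olb{b}$, which is supported on the union of the supports once that union is taken to be a trunk. Alternatively, the isometry can be obtained directly: the same range identity gives $\norm{\widehat{T}(\olb{a})}=\sup_t|\sum_{s\preceq t}a_s|=\norm{\olb{a}}^{\el}$, avoiding Lemma \ref{lem:ordered normed spaces and PNPP} altogether.

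The main obstacle is the careful verification of the combinatorics of $Q$. One must check that $[Q(s)]\supseteq[Q(t)]$ for $s\preceq t$, that incomparable nonempty sequences give disjoint cylinders, and above all that $R(t)$ lies in no cylinder $[Q(t')]$ with $t'$ strictly extending $t$ or incomparable to $t$. This rests on the $0$-terminated block coding, and I would state it as a short separate claim before the computation.
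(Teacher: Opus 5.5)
Your proof is correct, but it takes a different route from the paper's. The paper argues by induction on the length of the supporting trunk $F$:

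\begin{itemize}
\item it writes $\sum_{s\in F}a_s\chi_{[Q(s)]}$ as the constant $a_\emptyseq$ plus pieces $g_k=\sum_{s\in F_{\langle k\rangle}}a_s\chi_{[Q(s)]}$ supported on the pairwise disjoint clopen sets $[Q(\langle k\rangle)]$;
\item it uses a lemma from the authors' earlier work giving $\norm{(a_\emptyseq+\sum_k g_k)^+}=\max\{a_\emptyseq+\sup_k\norm{g_k^+},0\}$;
\item it shifts each $g_k$ to an element supported by a trunk of smaller length, applies the inductive hypothesis, and recombines with Lemma~\ref{lem:sup_max_0};
\item only then does it obtain the isometry, from Lemma~\ref{lem:ordered normed spaces and PNPP}(i).
\end{itemize}

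You instead evaluate $\widehat{T}(\olb{a})$ pointwise. The chain structure of the cylinders $[Q(s)]$ shows that every value is a partial sum $\sum_{s\preceq t}a_s$. The points $R(t)$ show that every partial sum is attained. So the range of $\widehat{T}(\olb{a})$ is exactly the set of partial sums, and both $\norm{\widehat{T}(\olb{a})^+}=\norm{\olb{a}^+}^{\el}$ and $\norm{\widehat{T}(\olb{a})}=\norm{\olb{a}}^{\el}$ follow at once.

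Your route is shorter and needs neither the induction nor the external lemma. It also makes Lemma~\ref{lem:ordered normed spaces and PNPP} superfluous for the isometry. Moreover, your key claim, that $R(t)\in[Q(s)]$ if and only if $s\preceq t$, is exactly the fact the paper verifies separately later, in the proof of Theorem~\ref{A_Lambda is C(2^omega)}, for surjectivity and for $\tilde{\delta}_s(\tilde{\chi}_t)=\langle\delta_{R(s)},T(\tilde{\chi}_t)\rangle$. The paper's inductive route instead mirrors the block decomposition $f=a\chi_{[1,\omega^\alpha]}+\sum_n g_n$ used in Theorem~\ref{A_lambda is C(omega^alpha)}, so the countable and Cantor cases are handled with the same tools.

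Two cosmetic points:
\begin{itemize}
\item The phrase ``every $t\in\el$ is realised as some $t_u$'' is literally true only for $t\in F$. As you note right after, for $t\notin F$ the point $R(t)$ realises the longest initial segment of $t$ lying in $F$, which is all you need.
\item Closure of $\mathcal{FS}(\A_\el)$ under $\vee$ does not require the union of supports to be a trunk. The coordinate $c_s$ of $\olb{a}\vee\olb{b}$ vanishes whenever $a_s=b_s=0$. You do still need this closure so that $\olb{a}^+$ in $\mathcal{FS}(\A_\el)$ agrees with the one in $\A_\el$ when you invoke Lemma~\ref{lem:norm of the positive part}(2).
\end{itemize}
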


\begin{proof}
It is clear that $\widehat{T}$ is a linear mapping, so we first show that it is 
%
a PNPP operator. Note that if an element of $\mathcal{FS}(\A_\el)$ is supported by a set $F$, then it is supported by any trunk $F'\in\mathfrak{Tr}(\el)$ containing $F$---we will thus proceed by induction on length of trunks. The case $n=0$ is trivial, as then the only trunk of length $0$ is $\{\emptyseq\}$. Thus, let us assume that $n\ge1$ and that the desired equality holds for all elements of $\mathcal{FS}(\A_\el)$ supported by trunks of $\el$ of length $n-1$. 

Let $\olb{a}\in\mathcal{FS}(\A_\el)$ be supported by a trunk $F$ of length $n$. Then, using the argument as for \cite[Lemma 3.6]{Rondos-Sobota_copies_of_separable_C(L)} (1) and the inductive assumption (2), 
we have

\[\norm{\bigg(\sum_{s \in F} a_s\chi_{[Q(s)]}\bigg)^+}=\norm{\Bigg(a_\emptyseq+\sum_{\substack{k\in\omega\\\langle k\rangle\in F}}\sum_{s \in F_{\langle k\rangle}} a_s\chi_{[Q(s)]}\Bigg)^+}=\]
\[\overset{(1)}{=}\max\Bigg\{a_{\emptyseq}+\sup_{\substack{k \in \omega\\\langle k\rangle\in F}}\norm{\bigg(\sum_{s \in F_{\langle k\rangle}} a_s\chi_{[Q(s)]}\bigg)^+},\ 0\Bigg\}=\]
\[=\max\Bigg\{a_{\emptyseq}+\sup_{\substack{k \in \omega\\\langle k\rangle\in F}}\norm{\bigg(\sum_{t \in F^{\langle k\rangle}} a_{k\concat t}\chi_{[Q(k\concat t)]}\bigg)^+},\ 0\Bigg\}=\]
\[=\max\Bigg\{a_{\emptyseq}+\sup_{\substack{k \in \omega\\\langle k\rangle\in F}}\norm{\bigg(\sum_{t \in F^{\langle k\rangle}} a_{k\concat t}\chi_{[Q(t)]}\bigg)^+},\ 0\Bigg\}=\]
\[\overset{(2)}{=}\max\Bigg\{a_{\emptyseq}+\sup_{\substack{k \in \omega\\\langle k\rangle\in F}}\norm{\olb{b}_k^+}^\el,\ 0\Bigg\},\]
where each $\olb{b}_k=(b_{k,t})_{t\in\el}$ is an element of $\mathcal{FS}(\A_\el)$ supported by the trunk $F^{\langle k\rangle}$ of size $n-1$ and defined as follows:
\[b_{k,t}=\begin{cases}
    a_{k\concat t}&,\text{ if }t\in F^{\langle k\rangle},\\
    0&,\text{ otherwise,}
\end{cases}
\]
for $t\in\el$. By Lemma \ref{lem:norm of the positive part}, for every $k\in\omega$ such that $\langle k\rangle\in F$ we have
\[\norm{\olb{b}_k^+}^\el=\sup_{t\in\el}\bigg(\sum_{s\preceq t}b_{k,s}\bigg)^+=\sup_{t\in F^{\langle k\rangle}}\bigg(\sum_{s\preceq t}a_{k\concat s}\bigg)^+=\sup_{t\in F_{\langle k\rangle}}\bigg(\sum_{\langle k\rangle\preceq s\preceq t}a_{s}\bigg)^+,\]
hence, by Lemma \ref{lem:sup_max_0} (3) and Lemma \ref{lem:norm of the positive part} (4), we get
\[\norm{\bigg(\sum_{s \in F} a_s\chi_{[Q(s)]}\bigg)^+}=
\max\Bigg\{a_{\emptyseq}+\sup_{\substack{k \in \omega\\\langle k\rangle\in F}}\sup_{t\in F_{\langle k\rangle}}\bigg(\sum_{\substack{\langle k\rangle \preceq s \preceq t}} a_s\bigg)^+,\ 0\Bigg\}=\]
\[=\max\Bigg\{a_{\emptyseq}+\sup_{\substack{t\in F\\l(t)\ge1}}\Bigg(\sum_{\substack{s \preceq t\\l(s)\ge1}} a_s\Bigg)^+,\ 0\Bigg\}\overset{(3)}{=}\sup_{t\in F}\bigg(\sum_{\substack{s \preceq t}}a_s\bigg)^+\overset{(4)}{=}\norm{\olb{a}^+}^\el,
\]
and so the required equality holds also for trunks of length $n$. Consequently, $\widehat{T}$ is a PNPP operator.
%

Finally, Lemma \ref{lem:ordered normed spaces and PNPP} implies that $\widehat{T}$ is an isometry. The proof is thus finished.
\end{proof}

\begin{remark}
    Note that from the definition of the operator $\widehat{T}$ in Lemma \ref{fin-norm} it follows that for every $s\in\el$ we have $\widehat{T}(\tilde{\chi}_s)=\chi_{[Q(s)]}$, that is, the element $\tilde{\chi}_s$ corresponds to the characteristic function of the clopen set $[Q(s)]$.
\end{remark}

We are in the position to describe a lattice isometry from the space $\A_{\el}$ onto $\C(2^\omega)$.

\begin{thm}
\label{A_Lambda is C(2^omega)}
The space $\A_{\el}$ is a Banach lattice. The mapping $T\colon \A_{\el}\to\C(2^\omega)$, given for every $(a_s)_{s \in \el}\in \A_\el$ by the formula
\[T\big((a_s)_{s \in \el}\big)=\sum_{s \in \el} a_s \chi_{[Q(s)]},\]
is a lattice isometry onto $\C(2^\omega)$. 

Moreover, the homeomorphic embedding $R \colon {\el} \rightarrow 2^{\omega}$ satisfies for every $s, t \in \el$ the equality 
\[\tilde{\delta}_{s}(\tilde{\chi}_{t})=\big\langle \delta_{R(s)}, T(\tilde{\chi}_{t}) \big\rangle.\] 
\end{thm}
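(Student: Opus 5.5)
The plan is to extend the operator $\widehat{T}$ of Lemma \ref{fin-norm} by continuity and then to check that the extension is given by the stated formula, is a surjective PNPP operator, and satisfies the duality relation with the Dirac measures.

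First, by Lemma \ref{fin-norm}, $\widehat{T}$ is a linear isometry from the dense subspace $\mathcal{FS}(\A_\el)$ (Lemma \ref{lem:fin-approx}) into the Banach space $\C(2^\omega)$. It therefore extends uniquely to an isometry $\overline{T}$ from the completion of $\A_\el$ into $\C(2^\omega)$.

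Next I identify $\overline{T}$ with $T$ on $\A_\el$. For $\olb{a}\in\A_\el$, Lemma \ref{lem:fin-approx} says that the net $(\olb{a}\rstr F)_{F\in\mathfrak{Tr}(\el)}$ converges to $\olb{a}$ in $\norm{\cdot}^\el$. Hence the net $\widehat{T}(\olb{a}\rstr F)=\sum_{s\in F}a_s\chi_{[Q(s)]}$ is Cauchy in $\C(2^\omega)$. Its limit is exactly $\sum_{s\in\el}a_s\chi_{[Q(s)]}$ in the sense of the sums over trees defined in the preliminaries, so $T(\olb{a})$ exists and equals $\overline{T}(\olb{a})$. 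Since $\widehat{T}$ is PNPP, and $\olb{b}\mapsto\olb{b}^+$ and $f\mapsto f^+$ are norm-continuous (for $\A_\el$ this follows from the formula in Lemma \ref{lem:norm of the positive part}(2)), $T$ is PNPP on all of $\A_\el$.

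The main step is surjectivity, and for it I would show that $\A_\el$ is complete. Take a Cauchy sequence $(\olb{a}^k)$ in $\A_\el$. The coordinate functionals satisfy $a_s=\tilde\delta_s-\tilde\delta_{pred(s)}$, so they have norm at most $2$, and the sequence therefore converges coordinatewise to some $\olb{a}$. Passing to the limit in the suprema defining the norms gives $\norm{\olb{a}^k-\olb{a}}^\el\to0$. Conditions (A.2) and (A.3) survive uniform limits by a standard $\eps/2$ argument, so $\olb{a}\in\A_\el$. Consequently $T[\A_\el]$ is closed.

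It remains to see that $T[\A_\el]$ is dense. The range contains every $\chi_{[Q(s)]}$. The key observation is that every basic clopen set $[\sigma]$ with $\sigma\in 2^{<\omega}$ can be written as a finite combination of such sets. If $\sigma$ ends with $0$, then $\sigma=Q(s)$ for some $s$. Otherwise write $\sigma=Q(s)\concat 1^j$; then $[\sigma]=[Q(s)]\setminus\bigcup_{i<j}[Q(s\concat i)]$. Thus the range contains all locally constant functions, which are dense by Stone--Weierstrass, and so $T$ is onto. Lemma \ref{lem:ordered normed spaces and PNPP} now gives that $\A_\el$ is a Banach lattice and that $T$ is a lattice isometry.

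Finally, for the duality relation I compute $\langle\delta_{R(s)},\chi_{[Q(t)]}\rangle$, which equals $1$ exactly when $Q(t)\prec R(s)$. Since $R(s)$ is $Q(s)$ followed by all $1$'s and $Q(t)$ ends in $0$, this happens iff $Q(t)\preceq Q(s)$, i.e. iff $t\preceq s$. This matches $\tilde\delta_s(\tilde\chi_t)$.
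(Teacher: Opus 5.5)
Your proof is correct. Well-definedness of $T$ via Lemmas \ref{lem:fin-approx} and \ref{fin-norm}, the PNPP property by continuity, the final appeal to Lemma \ref{lem:ordered normed spaces and PNPP} and the duality identity all match the paper. The genuine difference is how you get surjectivity.

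\textbf{The paper's route.} It constructs the preimage explicitly. For $f\in\C(2^\omega)$ it sets $a_\emptyseq=f(R(\emptyseq))$ and $a_s=f(R(s))-f(R(pred(s)))$ for $s\neq\emptyseq$, so that $\sum_{u\preceq t}a_u=f(R(t))$ by telescoping. Then (A.1) is immediate. Conditions (A.2) and (A.3) follow from uniform continuity of $f$, because $R(t')$ and $R(t)$ share the prefix $Q(t')$ whenever $t'\preceq t$. Finally $T(\olb{a})=f$, since both functions are continuous and agree on the dense set $R[\el]$. Completeness of $\A_\el$ is never checked by hand there; it drops out of Lemma \ref{lem:ordered normed spaces and PNPP}(ii).

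\textbf{Your route.} You prove completeness directly and combine closed range with density, using $[Q(s)\concat 1^j]=[Q(s)]\setminus\bigcup_{i<j}[Q(s\concat i)]$.

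\textbf{What each buys.} Your argument needs no metric estimates on $R$. The paper's argument gives an explicit inverse, $T^{-1}(f)_s=\langle\delta_{R(s)}-\delta_{R(pred(s))},f\rangle$ for $s\neq\emptyseq$. This is exactly the coefficient functional of the Markushevich basis, and the template for the operator $S(e)_s=\langle\rho(s)-\rho(pred(s)),e\rangle$ in Proposition \ref{prop: complementability of tree spaces}.

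\textbf{One detail to state.} In your completeness step, (A.2) and (A.3) pass to norm limits because the maps $\olb{b}\mapsto\olb{b}\rstr\el_u$ and $\olb{b}\mapsto\olb{b}\rstr\{t\colon l(t)\ge n\}$ have norm at most $2$ for $\norm{\cdot}^\el$. Each partial sum entering their norms is a difference of at most two partial sums $\sum_{v\preceq t}b_v$. This is not automatic: restriction to an arbitrary subset of $\el$ is unbounded for $\norm{\cdot}^\el$.
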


\begin{proof}
We first check that for each $(a_s)_{s \in\el} \in \A_{\el}$ the sum 
\[\sum_{s \in \el} a_s \chi_{[Q(s)]}\]
is a well-defined element of $\C(2^{\omega})$. Let $\olb{a}=(a_s)_{s\in\el}\in \A_\el$. Let $\eps>0$. By Lemma \ref{lem:fin-approx}, the net $(\olb{a}\rstr F)_{F \in \mathfrak{Tr}(\el)}$ converges to $\olb{a}$ in the $\el$-norm, so there is $F\in\mathfrak{Tr}(\el)$ such that for any $F'\in\mathfrak{Tr}(\el)$ with $F\subseteq F'$ we have
\[\norm{\olb{a}\rstr F'-\olb{a}\rstr F}^\el<\eps/2.\]
Since by Lemma \ref{fin-norm} for every $F'\in\mathfrak{Tr}(\el)$ with $F\subseteq F'$ we have
\[\norm{\sum_{s \in F'\setminus F} a_s\chi_{[Q(s)]}}=\norm{\olb{a}\rstr(F'\setminus F)}^\el=\norm{\olb{a}\rstr F'-\olb{a}\rstr F}^\el,\]
it follows that for any $F'\in\mathfrak{Tr}(\el)$ with $F\subseteq F'$ it holds
\[\norm{\sum_{s \in F'\setminus F} a_s\chi_{[Q(s)]}}<\eps/2.\]
Consequently, for any $F_1,F_2\in\mathfrak{Tr}(\el)$ with $F\subseteq F_1,F_2$ we get
\[\norm{\sum_{s \in F_1} a_s\chi_{[Q(s)]}-\sum_{s \in F_2} a_s\chi_{[Q(s)]}}\le\]
\[\le\norm{\sum_{s \in F_1} a_s\chi_{[Q(s)]}-\sum_{s \in F} a_s\chi_{[Q(s)]}}+\norm{\sum_{s \in F_2} a_s\chi_{[Q(s)]}-\sum_{s \in F} a_s\chi_{[Q(s)]}}=\]
\[=\norm{\sum_{s \in F_1\setminus F} a_s\chi_{[Q(s)]}}+\norm{\sum_{s \in F_2\setminus F} a_s\chi_{[Q(s)]}}<\eps/2+\eps/2=\eps.\]
As $\eps$ was arbitrary, we obtain that the net
\[\Bigg(\sum_{s \in F} a_s\chi_{[Q(s)]}\Bigg)_{F \in \mathfrak{Tr}(\el)}\]
is Cauchy in the supremum norm and so it converges to a function in $\C(2^{\omega})$, that is, in other words, the sum 
$\sum_{s\in\el}a_s\chi_{[Q(s)]}$
exists in $\C(2^{\omega})$. It follows that the mapping $T$ is well-defined. It is also clearly that $T$ is linear.  

\medskip

By Lemmas \ref{lem:fin-approx} and \ref{fin-norm} and the continuity of the lattice operations, for every $\olb{a}=(a_s)_{s\in\el}\in \A_\el$ we have
\[\norm{\olb{a}^+}^\el=\lim_{F \in\mathfrak{Tr}(\el)} \norm{\olb{a}^+\rstr F}^\el=\lim_{F\in\mathfrak{Tr}(\el)}\norm{(\olb{a}\rstr F)^+}^\el=\]
\[=\lim_{F \in \mathfrak{Tr}(\el)} \norm{\bigg(\sum_{s \in F} a_s\chi_{[Q(s)]}\bigg)^{+}}=\norm{\bigg(\sum_{s \in \el} a_s\chi_{[Q(s)]}\bigg)^{+}}=\norm{T(\olb{a})^+},\]
and hence $T$ is a PNPP operator.

\medskip

Now, in view of Lemmas 
\ref{A_lambda is a normed lattice} and \ref{lem:ordered normed spaces and PNPP}, in order to establish the first part of the conclusion it suffices to check that $T$ is surjective. Thus, we fix $f\in\C(2^\omega)$. To justify that $f\in T[\A_\el]$, we prove that the system $\olb{a}=(a_s)_{s \in \el}\in\er^\el$, defined for every $s\in\el$ by
\[a_s=\begin{cases}
f\big(R(\emptyseq)\big)&,\text{ if }s=\emptyseq,\\
f\big(R(s)\big)-f\big(R(pred(s))\big)&,\text{ otherwise},
\end{cases}\]
belongs to the space $\A_{\el}$, and that in this case we have $T(\olb{a})=f$.

Firstly, for any $t=\langle t_1,\ldots,t_n\rangle \in \el$ of length $n>0$ we have 
\[
\abs{\sum_{s \preceq t} a_s}=\]
\[=\bigg|f\big(R(\emptyseq)\big)+\Big(f\big(R(\langle t_1\rangle)\big)-f\big(R(\emptyseq)\big)\Big)+\Big(f\big(R(\langle t_1, t_2\rangle)\big)-f\big(R(\langle t_1\rangle)\big)\Big)+\ldots\]
\[\ldots+
\Big(f\big(R(t)\big)-f\big(R(\langle t_1, \ldots, t_{n-1}\rangle)\big)\Big) \bigg|=\abs{f\big(R(t)\big)}\le\norm{f}.\]
Hence, $\norm{\olb{a}}^{\el} \leq \norm{f}<\infty$, and so condition (A.1) is satisfied.

For conditions (A.2) and (A.3), let $d=d_{2^\omega}$ be the canonical metric on $2^\omega$ and fix $\eps>0$. By the uniform continuity of $f$ there exists $n_0 \in\en$ such that $\abs{f(x)-f(y)}<\ep$ whenever $x, y \in 2^{\omega}$ satisfy $d(x, y) < 1/{n_0}$. Let $s\in\el$. Notice that for each $n \geq n_0$, if $s\concat n \preceq t$, then $d\big(R(t), R(s\concat n)\big)<1/n\le1/n_0$, and thus, by a similar computation as above, we have 
\[\norm{\olb{a}\rstr\el_{s\concat n}}^{\el}=\sup_{\substack{t\in\el\\s\concat n\preceq t}}\abs{\sum_{s\concat n \preceq u \preceq t} a_u}=\sup_{\substack{t\in\el\\s\concat n\preceq t}}\abs{f\big(R(t)\big)-f\big(R(s\concat n)\big)}\le\ep,\]
hence condition (A.2) is satisfied as well.

Let $n\ge n_0$. For any $t_1, t_2 \in \el$ such that $l(t_1) \geq n$ and $t_1 \preceq t_2$ we have $d\big(R(t_1), R(t_2)\big) < 1/n$, similarly as above. 
Hence, 
\[\norm{\olb{a}\rstr\{s\in\el\colon l(s) \geq n\}}^{\el}=\sup \Bigg\{\abs{\sum_{t_1 \preceq s \preceq t_2} a_s}\colon\ t_1, t_2 \in \el,\ l(t_1) = n,\ t_1\preceq t_2\Bigg\}=\]
\[=\sup \bigg\{\abs{f\big(R(t_2)\big)-f\big(R(t_1)\big)}\colon\ t_1, t_2 \in \el,\ l(t_1) = n,\ t_1\preceq t_2\bigg\}\le \ep,\]
and thus also condition (A.3) is satisfied. 

It follows that $\olb{a}\in \A_\el$, and so
\[T(\olb{a})=f\big(R(\emptyseq)\big) \chi_{[Q(\emptyseq)]}+\sum_{\substack{s \in \el\\s \neq \emptyseq}}\Big(f\big(R(s)\big)-f\big(R(pred(s))\big)\Big) \chi_{[Q(s)]}\]
is a continuous function on $2^{\omega}$. Moreover, it follows that this function coincides with $f$ on the range of $R$. Indeed, it is clear that $\big(T(\olb{a})\big)\big(R(\emptyseq)\big)=f\big(R(\emptyseq)\big)$, so fix a sequence $s=\langle s_1,\ldots,s_n\rangle \in \el$ of length $n>0$ and observe that, for any $t\in\el$, we have $R(s) \in [Q(t)]$ if and only if $t \preceq s$. Thus, it holds
\[\big(T(\olb{a})\big)\big(R(s)\big)=f\big(R(\emptyseq)\big)+\Big(f\big(R(\langle s_1\rangle)\big)-f\big(R(\emptyseq)\big)\Big)+\]
\[+\Big(f\big(R(\langle s_1, s_2\rangle)\big)-f\big(R(\langle s_1\rangle)\big)\Big)+\ldots\]
\[\ldots+
\Big(f\big(R(s)\big)-f\big(R(\langle s_1, \ldots, s_{n-1}\rangle)\big)\Big)=f\big(R(s)\big).\]
Consequently, by the continuity of $f$ and $T(\olb{a})$ and the density of the range of $R$ in $2^{\omega}$, we get the equality $T(\olb{a})=f$, which shows that $T$ is surjective, as required.

\medskip

Finally, for the second part of the conclusion, we check that for each $s, t \in \el$ we have 
\[\tilde{\delta}_{s}(\tilde{\chi}_{t})=\big\langle \delta_{R(s)}, T(\tilde{\chi}_{t})\big\rangle.\] 
Let $s,t\in\el$. Recall that $\tilde{\delta}_s(\tilde{\chi}_t)$ equals to $1$ if $t\preceq s$, and $0$ otherwise. Since it holds 
\[\big\langle \delta_{R(s)}, T(\tilde{\chi}_{t})\big\rangle=T(\tilde{\chi}_{t})(R(s))=\chi_{[Q(t)]}(R(s)),\]
we similarly have that $\big\langle \delta_{R(s)}, T(\tilde{\chi}_{t}) \big\rangle$ equals $1$ if $t \preceq s$, and $0$ otherwise, as required.
\end{proof}

\subsection{A Markushevich basis and the dual of a space $\A_\Lambda$}

By the results presented in the previous subsections, the space dual to a given space $\A_\Lambda$ is isometrically isomorphic to the space $\M(L)$ for some metrizable compact space $L$. In addition to this observation, we note the following useful facts.

\begin{prop}
\label{prop:density}
Let $\Lambda \in \TT$ and $m \in \en$.
\begin{enumerate}[(i)]
    \item For each $(s,i)\in\Sigma_m(\Lambda)$ we have:
        \[\norm{\tilde{\chi}_{s, i}}^\Lambda=1,\quad\tilde{\delta}_{s, i} \in (\A_{\Lambda}^m)^*,\quad\norm{\tilde{\delta}_{s, i}}_{(\A_{\Lambda}^m)^*}=1.\]
    \item The following equalities hold:    
    \[\overline{\span} \big\{ \tilde{\chi}_{t, j}\colon\ (t, j) \in \Sigma_m(\Lambda)\big\}=\A_{\Lambda}^m,\]
    \[\overline{\mbox{\textup{co}}}^{w^*}\!\big\{\! \pm \tilde{\delta}_{t, j}\colon\ (t, j) \in \Sigma_m(\Lambda)\}=B_{(\A_{\Lambda}^m)^*}.\]
    \item The mapping
    \[\Sigma_m(\Lambda)\ni (s,i) \longmapsto \tilde{\delta}_{s, i}\in\big(B_{(\A_{\Lambda}^m)^*}, w^*\big)\]
    is a homeomorphic embedding.
\end{enumerate}
\end{prop}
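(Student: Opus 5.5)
The plan is to transport everything through the surjective lattice isometries $T\colon\A_\Lambda^m\to\C(K)$ from Theorems \ref{A_lambda is C(omega^alpha)} and \ref{A_Lambda is C(2^omega)}. Here $K=[1,\omega^\alpha m]$ when $\Lambda\in\TT_{\alpha+1}$, and $K=2^\omega$ when $\Lambda=\el$; in the latter case we take $m$ copies, so $K$ is a disjoint union of $m$ copies of $2^\omega$ and $T$ acts coordinatewise. These theorems also supply the map $\rho$ (or $R$). It is a homeomorphism onto $K$ in the countable case and a homeomorphic embedding with dense range in the Cantor case. It satisfies $\tilde{\delta}_{s,i}(\tilde{\chi}_{t,j})=\langle\delta_{\rho(s,i)},T(\tilde{\chi}_{t,j})\rangle$.

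First I upgrade this identity to $\tilde{\delta}_{s,i}=T^*\delta_{\rho(s,i)}$ on all of $\A_\Lambda^m$.

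1. Both sides are linear, so they agree on finitely supported elements.
2. $\tilde{\delta}_{s,i}$ is bounded: $|\tilde{\delta}_{s,i}(\olb{a})|=|\sum_{t\preceq s}a_{t,i}|\le\norm{\olb{a}}$ directly from the definition of the $\Lambda$-norm. The functional $\delta_{\rho(s,i)}\circ T$ is bounded as well.
3. By Lemma \ref{lem:fin-approx}, finitely supported elements are dense, so the two functionals coincide everywhere.

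This gives (i) at once. Since $T$ is an isometry, $\norm{\tilde{\delta}_{s,i}}=\norm{\delta_{\rho(s,i)}}=1$. The norm $\norm{\tilde{\chi}_{s,i}}=1$ is read off from the definition, because every partial sum along a branch is $0$ or $1$.

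For the first part of (ii), finitely supported elements are spans of the $\tilde{\chi}_{t,j}$, and they are dense by Lemma \ref{lem:fin-approx}. For the second part, $T^*$ is a weak$^*$-homeomorphic isometry of $\M(K)$ onto $(\A_\Lambda^m)^*$. Hence it suffices to show $B_{\M(K)}=\overline{\mathrm{co}}^{w^*}\{\pm\delta_x\colon x\in D\}$ with $D=\rho[\Sigma_m(\Lambda)]$.

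The set $D$ is all of $K$ in the countable case. In the Cantor case $D$ is dense, so $x\mapsto\delta_x$ being weak$^*$ continuous gives $\{\delta_x\colon x\in K\}\subseteq\overline{\{\delta_x\colon x\in D\}}^{w^*}$. Krein--Milman, as recalled in the preliminaries, then finishes the argument. A Hahn--Banach separation argument would work equally well, since $\sup_{x\in D}|f(x)|=\norm{f}$ for continuous $f$.

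For (iii), the map factors as $(s,i)\mapsto\rho(s,i)\mapsto\delta_{\rho(s,i)}\mapsto T^*\delta_{\rho(s,i)}$. The first arrow is a homeomorphic embedding. The second, $x\mapsto\delta_x$, is a homeomorphic embedding of $K$ into $(B_{\M(K)},w^*)$: it is continuous and injective on a compact space, so it is an embedding on the whole of $K$, and hence on any subspace. The third, $T^*$, is a weak$^*$ homeomorphism.

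The main point needing care is the Cantor case with $m>1$. The theorems there are stated only for $m=1$, so I check that the $m$-fold direct sum of $T$ and of $R$ still satisfies the displayed identity; this holds coordinatewise. Beyond that, the density and continuity bookkeeping is routine.
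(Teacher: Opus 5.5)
Your proposal is correct and follows essentially the same route as the paper. Both transport everything through the surjective lattice isometry $T$ of Theorems \ref{A_lambda is C(omega^alpha)} and \ref{A_Lambda is C(2^omega)}, using that $T^*$ is a weak$^*$ homeomorphism with $T^*\delta_{\rho(s,i)}=\tilde{\delta}_{s,i}$, then density of the range of $R$ plus Krein--Milman for the convex-hull equality, and a three-step factorization for (iii); your density argument extending this identity from the $\tilde{\chi}_{t,j}$ to all of $\A_\Lambda^m$ spells out a step the paper leaves implicit.
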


\begin{proof}
Item (i) is obvious and the first equality in item (ii) follows from Lemma \ref{lem:fin-approx}. 

For the second equality in item (ii) and for item (iii), we proceed as follows. We again for simplicity assume that $m=1$ and $\Lambda=\el$; the proofs of other cases are the same except that one may need to use Theorem \ref{A_lambda is C(omega^alpha)} instead of Theorem \ref{A_Lambda is C(2^omega)}. Let $T \colon \A_{\el} \rightarrow \C(2^{\omega})$ be the isomorphism from Theorem \ref{A_Lambda is C(2^omega)}. 
Then, since the adjoint operator $T^{*} \colon \M(2^{\omega}) \rightarrow \A_{\el}^*$ is a weak$^*$--weak$^*$ homeomorphism and $(T^*)^{-1}(\tilde{\delta}_t)=\delta_{R(t)}$ holds for every $t\in\el$, we have
\[\overline{\mbox{co}}^{w^*}\!\big\{\!\pm\!\tilde{\delta}_{t}\colon\ t \in \el\big\}=T^*\Big[\overline{\mbox{co}}^{w^*}\!\Big\{\!\pm\!(T^*)^{-1}(\tilde{\delta}_{t})\colon\ t \in \el\Big\}\Big]=\]
\[=T^*\Big[\overline{\mbox{co}}^{w^*}\!\big\{\!\pm\!\delta_{R(t)}\colon\ t \in \el\big\}\Big]=T^*\Big[\overline{\mbox{co}}^{w^*}\!\big\{\!\pm\!\delta_{x}\colon\ x \in 2^{\omega}\big\}\Big]=\]
\[=T^*\big[B_{\M(2^\omega)^*}\big]=B_{\A_{\el}^*},\]
as desired. Moreover, as $\tilde{\delta}_t=T^*(\delta_{R(t)})$ for every $t\in\el$, we get that the assignment $\el\ni s\mapsto\tilde{\delta}_s\in\big(B_{\A_{\el}^*}, w^*\big)$ is a composition of three injective continuous functions with the continuous inverses.
\end{proof}

\begin{remark}
Notice that, for each $\Lambda\in\TT$ and $m\in\en$, the system 
\[\big(\tilde{\chi}_{s, i}, \widehat{\delta}_{s, i}\big)_{(s, i) \in\Sigma_m(\Lambda)},\]
where for each $(s,i)\in\Sigma_m(\Lambda)$ we set
\[\widehat{\delta}_{s, i}=\begin{cases}
\tilde{\delta}_{s, i}&,\text{ if }s=\emptyseq,\\
\tilde{\delta}_{s, i}-\tilde{\delta}_{pred(s), i}&,\text{ otherwise,}
\end{cases}\]
forms a \emph{Markushevich basis} of the space $\A_{\Lambda}^m$ (cf. \cite{hajek2008biorthogonal}).
\end{remark}

\section{Projectional trees in Banach spaces\label{sec:ctcf}}
This section contains the proofs of our characterizations of the existence of complemented copies of the Banach spaces $\C(L)$ for metric compact spaces $L$ in arbitrary Banach spaces $E$. These characterizations are in a similar spirit as the characterization of the complementability of $c_0$ in Banach spaces obtained by Schlumprecht \cite{schlumprecht_phd_thesis} (cf. also \cite[Theorem 1.1.2]{cembranos2006banach}), which states that a Banach space $E$ contains a complemented copy of $c_0$ if and only if $E$ contains a basic sequence $(e_n)_{n \in \omega}$ equivalent to the standard basis of $c_0$ and admits a weak$^*$-null sequence $(\e_n)_{n \in \omega}$ in $E^*$ such that $\inf_{n \in \omega} \abs{\langle \e_n, e_n \rangle}>0$. 

In a similar fashion, a standard observation asserts that if a compact space $K$ contains a nontrivial convergent sequence, then the Banach space $\C(K)$ contains complemented isometric copies of the spaces $c_0$ and $\C([0,\omega])$ (cf. \cite[Proposition 2.4.6]{dalesbookdual} and \cite[Chapter 19]{KKLPS}). To see this, let $(x_n)_{n\in\omega}$ be a sequence in a compact space $K$ convergent to some point $x\in K$ such that $x_n\neq x_m\neq x$ for every $n\neq m\in\omega$. Using the Urysohn lemma, one can find a sequence $(U_n)_{n\in\omega}$ of pairwise disjoint open subsets of $K$ and a sequence $(f_n)_{n\in\omega}$ in $\C(K,[0,1])$ such that $x_n\in\supp(f_n)\subseteq U_n$ and $f_n(x_n)=1$ for every $n\in\omega$. Let $f=\chi_K$. Then, the subspaces 
\[E=\overline{\span\{f_n\colon n\in\omega\}}\quad\text{and}\quad F=\overline{\span\big(\{f_n\colon n\in\omega\}\cup\{f\}\big)}\]
of $\C(K)$ are isometrically isomorphic to the spaces $c_0$ and $\C([0,\omega])$, respectively. Moreover, the mappings $P\colon\C(K)\to E$ and $\tilde{P}\colon\C(K)\to F$, given for every $g\in\C(K)$ by
\[P(g)=\sum_{n\in\omega}\langle\delta_{x_n}-\delta_x,g\rangle f_n\]
and
\[\tilde{P}(g)=\langle\delta_x,g\rangle f+\sum_{n\in\omega}\langle\delta_{x_n}-\delta_x,g\rangle f_n,\]
are bounded projections onto $E$ and $F$, respectively. 

\medskip

Following the above ideas, we first introduce some auxiliary notions and prove basic results related to them, whereupon we proceed with the proofs of the promised characterizations. We start with the following notion.

\begin{definition}
If $(X,d)$ is a metric space and $E$ is a Banach space, then a mapping $\rho \colon X \rightarrow E^*$ is \emph{uniformly weak$^*$ continuous} if, for each vector $e \in E$ and  sequences $(x_n)_{n \in \omega}, (y_n)_{n \in \omega}$ in $X$, the following implication holds: 
\[\lim_{n \rightarrow \infty} d(x_n, y_n)=0\ \Longrightarrow\ \lim_{n \rightarrow \infty}\big\langle \rho(x_n)-\rho(y_n), e \big\rangle=0.\]\eodef
\end{definition}

\begin{remark}\label{rem:uni_weakstar_cont}
    Let $(X,d)$ be a metric space and $E$ a Banach space. If a mapping $\rho \colon X \rightarrow E^*$ is uniformly weak$^*$ continuous, then it is sequentially $d$--$w^*$ continuous, and hence $d$--$w^*$ continuous. On the other hand, if $X$ is a compact metric space, then any continuous mapping $\rho \colon X \rightarrow (E^*, w^*)$ is uniformly weak$^*$ continuous.
\end{remark} 

\begin{lemma}
\label{lem: the properties of rho}
Fix $\Lambda \in \TT$ and $m \in \en$. Assume that $E$ is a Banach space and let $\rho\colon\Sigma_m(\Lambda) \rightarrow E^*$ be a uniformly weak$^*$ continuous mapping. For each $(s, i) \in \Sigma_m(\Lambda)$ we denote $\rho_{s, i}=\rho((s, i))$. Then, the following conditions are satisfied:
\begin{enumerate}[(L.1)]
    \item\label{tree:unif} for each $1\le i\le m$, $s \in NL(\pi_i^m(\Lambda))$, and $e \in E$, it holds
    \[\lim_{n \rightarrow \infty} \sup\bigg\{\abs{\rho_{t, i}(e)-\rho_{s, i}(e)}\colon\ t\in\pi_i^m(\Lambda),\ s\concat n \preceq t\bigg\}=0,\]
    \item\label{tree:len} for each $1\le i\le m$ and $e\in E$, it holds
    \[\lim_{n\to\infty}\sup\bigg\{\abs{\rho_{t, i}(e)-\rho_{s, i}(e)}\colon\ s,t\in\pi_i^m(\Lambda),\ l(s)\ge n,\ s\preceq t\bigg\}=0.\]
\end{enumerate}
\end{lemma}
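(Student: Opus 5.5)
Both conditions will be proved by contradiction, feeding sequences of tree elements into the definition of uniform weak$^*$ continuity. The topology on $\Sigma_m(\Lambda)$ is the one carried over from $2^\omega$ via $R$ (on each copy $\pi^m_i(\Lambda)$ separately). So I will measure distances by $d(u,v)=d_{2^\omega}(R(u),R(v))$ for $u,v$ in the same copy, and set the distance between different copies to be some constant, say $1$; this is a compatible metric. Nothing depends on this choice beyond the two estimates below, both of which come from the definition of $R$.

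\textbf{Estimate 1.} If $s\concat n\preceq t$, then $Q(t)$ extends $Q(s)\concat\langle 1,\ldots,1\rangle$ with $n$ ones. Also $R(s)$ extends $Q(s)$ followed by infinitely many ones. Hence $R(t)$ and $R(s)$ agree on the first $l(Q(s))+n$ coordinates, and so
\[d(t,s)\le 2^{-(l(Q(s))+n)}\le 2^{-n}.\]

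\textbf{Estimate 2.} If $l(s)\ge n$ and $s\preceq t$, then $Q(s)\preceq Q(t)$. Since $Q(s)$ contains $l(s)$ zeros, $l(Q(s))\ge l(s)\ge n$. Hence $R(s)$ and $R(t)$ agree on the first $n$ coordinates, and $d(t,s)\le 2^{-n}$.

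\textbf{Proof of (L.1).} Suppose it fails for some $i$, some $s\in NL(\pi^m_i(\Lambda))$, some $e\in E$ and some $\eps>0$. Then there are infinitely many $n$, say $n_k\to\infty$, together with $t_k\in\pi^m_i(\Lambda)$ satisfying $s\concat n_k\preceq t_k$ and
\[\abs{\rho_{t_k,i}(e)-\rho_{s,i}(e)}\ge\eps .\]
Take $x_k=(t_k,i)$ and $y_k=(s,i)$. By Estimate 1, $d(x_k,y_k)\le 2^{-n_k}\to0$. Uniform weak$^*$ continuity then gives $\langle\rho(x_k)-\rho(y_k),e\rangle\to0$, a contradiction. (Lemma \ref{lemma:tree_topology} already yields $t_k\to s$. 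Here, however, $y_k$ is constant, so plain continuity would also suffice.)

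\textbf{Proof of (L.2).} Suppose it fails for some $i$, $e$ and $\eps>0$. Then there are $n_k\to\infty$ and pairs $s_k\preceq t_k$ in $\pi^m_i(\Lambda)$ with $l(s_k)\ge n_k$ and
\[\abs{\rho_{t_k,i}(e)-\rho_{s_k,i}(e)}\ge\eps .\]
By Estimate 2, $d\big((t_k,i),(s_k,i)\big)\le 2^{-n_k}\to0$. This contradicts uniform weak$^*$ continuity. Here the uniformity is genuinely needed, because the points $s_k$ move around the tree.

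The only delicate point is making precise which metric on $\Sigma_m(\Lambda)$ is meant and checking the two estimates; the rest is bookkeeping. When $\Lambda$ has countable rank the claims hold vacuously only in part, but the same argument covers every $\Lambda\in\TT$.
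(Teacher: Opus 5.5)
Your proof is correct and is essentially the paper's: both (L.1) and (L.2) are obtained by contradiction, by extracting pairs of points at vanishing distance and applying uniform weak$^*$ continuity, and, as you note, plain continuity already suffices for (L.1). Your explicit metric $d(u,v)=d_{2^\omega}(R(u),R(v))$ and your two estimates make precise what the paper leaves to Lemma~\ref{lemma:tree_topology} and to the phrase ``any metric compatible with the topology''; for the non-compact full tree $\el$ that phrase is only correct for the metric transported via $R$, which is exactly the metric you fixed.
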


\begin{proof}
Assuming that property \ref{tree:unif} does not hold, there exist $1\le i\le m$, $s \in NL(\pi_i^m(\Lambda))$, $e \in E$, $\ep>0$, a strictly increasing sequence $(n_k)_{k\in\omega}$ of natural numbers, and a sequence $(t_{k})_{k\in\omega}$ of points in $\pi_i^m(\Lambda)$, such that for each $k \in \omega$ we have $s\concat n_k\preceq t_{k}$ and
\[\abs{\rho_{{t_{k}}, i}(e)-\rho_{s, i}(e)} \geq \ep.\]
By Lemma \ref{lemma:tree_topology}, the sequence $(t_{k})_{k\in\omega}$ converges to $s$ in the topology of $\pi_i^m(\Lambda_i)$, hence, by the uniform weak$^*$ continuity of $\rho$, the sequence $(\rho_{{t_{k}}, i})_{k\in\omega}$ converges weak$^*$ to $\rho_{s, i}$, which contradicts the above inequality.

For the proof of property \ref{tree:len}, assume that there exist $1\le i\le m$, $e \in E$, $\ep>0$ and sequences $(s_n)_{n\in\omega}, (t_n)_{n\in\omega}$ in $\pi_i^m(\Lambda)$ such that for each $n \in \omega$ we have $l(s_n)\ge n$, $s_n \preceq t_n$, and 
\[\abs{\rho_{t_n, i}(e)-\rho_{s_n, i}(e)} \geq \ep.\]
Then, $\lim_{n\to\infty} d(s_n, t_n)=0$, where $d$ is any metric compatible with the topology of $\Lambda$ (cf. Lemma \ref{lemma:tree_topology}), which contradicts the uniform weak$^*$ continuity of $\rho$.
\end{proof}

The following proposition constitutes the core of the proofs of the main results in this section.

\begin{prop}
\label{prop: complementability of tree spaces}
For a tree $\Lambda \in \TT$, $m \in \en$, and a Banach space $E$, the following assertion are equivalent:
\begin{enumerate}[(i)]
    \item $\A_{\Lambda}^m$ is isomorphic (resp. isometric) to a complemented subspace of $E$,
    \item there exist an isomorphic (resp. isometric) embedding $T\colon \A_{\Lambda}^m \rightarrow E$ and a uniformly weak$^*$ continuous mapping $\rho\colon\Sigma_m(\Lambda) \rightarrow E^*$ onto a bounded subset of $E^*$ such that $T^* \circ \rho =\tilde{\delta}$, where $\tilde{\delta} \colon\Sigma_m(\Lambda) \rightarrow (\A_{\Lambda}^m)^*$ is the canonical embedding mapping each $(s, i)$ onto $\tilde{\delta}_{(s, i)}$.
\end{enumerate}
\vspace{2pt}
In implication (ii)$\Rightarrow$(i), we moreover have:
\begin{enumerate}[(a)]
    \item if $T$ is an isometry and $\rho\big[\Sigma_m(\Lambda)\big] \subseteq B_{E^*}$, then the projection in (i) has norm $1$,
    \item if $E$ is a Banach lattice, $T$ is positive, and $\rho\big[\Sigma_m(\Lambda)\big] \subseteq E^*_+$, then the projection in (i) is positive.
\end{enumerate}
\end{prop}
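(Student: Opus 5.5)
For (i)$\Rightarrow$(ii), let $T\colon\A_\Lambda^m\to E$ be an isomorphic (resp. isometric) embedding and $P\colon E\to T[\A_\Lambda^m]$ a bounded projection. Set $\rho_{s,i}=(T^{-1}\circ P)^*(\tilde{\delta}_{s,i})=\tilde{\delta}_{s,i}\circ T^{-1}\circ P\in E^*$. Since $T^{-1}P T=\id$, we get $T^*\rho_{s,i}=\tilde{\delta}_{s,i}$. The set $\rho[\Sigma_m(\Lambda)]$ is bounded by $\norm{T^{-1}P}$, because $\norm{\tilde{\delta}_{s,i}}=1$ by Proposition~\ref{prop:density}(i). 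For uniform weak$^*$ continuity, note that by Proposition~\ref{prop:density}(iii) the map $(s,i)\mapsto\tilde{\delta}_{s,i}$ is $w^*$-continuous, and composing with the $w^*$--$w^*$ continuous adjoint keeps $\rho$ continuous into $(E^*,w^*)$. When $\Lambda$ has countable rank, $\Sigma_m(\Lambda)$ is compact metric (homeomorphic to $[1,\omega^\alpha m]$ by Theorem~\ref{A_lambda is C(omega^alpha)}), so Remark~\ref{rem:uni_weakstar_cont} gives uniform weak$^*$ continuity. For $\Lambda=\el$, the space $\el$ is not compact. Here I would use the metric transported by $R$ from $2^\omega$: the map $t\mapsto\delta_{R(t)}$ extends continuously to the compact space $2^\omega$, hence $\rho=(T^{-1}P)^*\circ T_0^{*}\circ\delta\circ R$ (with $T_0$ the isometry of Theorem~\ref{A_Lambda is C(2^omega)}) extends to a $w^*$-continuous map on $2^\omega$. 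This extension is uniformly weak$^*$ continuous by Remark~\ref{rem:uni_weakstar_cont}, and so is its restriction.

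For (ii)$\Rightarrow$(i), the plan is to define $P=T\circ Q$, where $Q\colon E\to\A_\Lambda^m$ is the coordinate map
\[Q(e)=\big(a_{s,i}\big)_{(s,i)\in\Sigma_m(\Lambda)},\quad a_{\emptyseq,i}=\rho_{\emptyseq,i}(e),\quad a_{s,i}=\rho_{s,i}(e)-\rho_{pred(s),i}(e)\ \ (s\neq\emptyseq).\]
This mirrors the surjectivity part of Theorem~\ref{A_Lambda is C(2^omega)}. Telescoping gives $\sum_{t\preceq s}a_{t,i}=\rho_{s,i}(e)$, so $\norm{Q(e)}\le\sup\norm{\rho_{s,i}}\cdot\norm{e}$ and condition (A.1) holds.

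The main step is showing $Q(e)\in\A_\Lambda^m$. Condition (A.2) follows from (L.1) of Lemma~\ref{lem: the properties of rho}: for $s\concat n\preceq t$, the partial sum $\sum_{s\concat n\preceq u\preceq t}a_{u,i}$ equals $\rho_{t,i}(e)-\rho_{s,i}(e)$. Condition (A.3) follows in the same way from (L.2). Next, $QT=\id$. For this, check $Q(T\tilde{\chi}_{t,j})=\tilde{\chi}_{t,j}$ using $\rho_{s,i}(T\tilde{\chi}_{t,j})=\tilde{\delta}_{s,i}(\tilde{\chi}_{t,j})$, which is just the hypothesis $T^*\circ\rho=\tilde{\delta}$. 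Then extend by linearity, boundedness, and density of the span (Proposition~\ref{prop:density}(ii)). Alternatively, for any $x$ the partial sums of $Q(Tx)$ equal $\tilde{\delta}_{s,i}(x)$, which are exactly the partial sums of $x$, and elements are determined by their partial sums. Hence $P=TQ$ is a bounded projection onto $T[\A_\Lambda^m]$.

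For (a), $\norm{Q(e)}=\sup_{s,i}\abs{\rho_{s,i}(e)}\le\norm{e}$ and $T$ is an isometry, so $\norm{P}\le1$. For (b), if every $\rho_{s,i}\ge0$ and $e\ge0$, then all partial sums of $Q(e)$ are nonnegative. Lemma~\ref{lem:norm of the positive part}(1) then gives $Q(e)\ge0$, and positivity of $T$ yields $P(e)\ge0$. I expect the main obstacle to be the uniform weak$^*$ continuity in (i)$\Rightarrow$(ii) for the non-compact tree $\el$; the extension-to-$2^\omega$ argument above is the intended fix.
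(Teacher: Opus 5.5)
Your proposal is correct and follows the paper's proof essentially step for step. It uses the same functionals $\rho_{s,i}=\tilde{\delta}_{s,i}\circ T^{-1}\circ P$, extended to $2^\omega$ through the Dirac map when $\Lambda=\el$, and the same coordinate operator (the paper's $S$), checked against (A.1)--(A.3) via (L.1)--(L.2), with the same norm and positivity arguments. The one minor difference is how you show $QT=\id$: you compare partial sums directly, whereas the paper proves $T^*\circ S^*=\id$ on the dual via the weak$^*$-closed convex hull of $\{\pm\tilde{\delta}_{s}\}$. Your route is slightly more elementary.
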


\begin{proof}
To simplify the argument, we again assume that $m=1$; the more general case follows easily.

First, we show implication (i)$\Rightarrow$(ii). Let $F \subseteq E$ be a closed linear subspace, $T \colon \A_{\Lambda} \rightarrow F$ be a surjective isomorphism (resp. isometry), and $P \colon E \rightarrow F$ be a projection. Set
\[\rho=P^* \circ (T^*)^{-1} \circ\tilde{\delta}.\]
Then, $\rho$ is continuous (by Proposition \ref{prop:density}), maps $\Lambda$ onto a bounded subset of $E^*$, and for each $s \in \Lambda$ and $e \in E$ it holds 
\[\big\langle T^*(\rho(s)), e \big\rangle=\big\langle (T^*)^{-1}(\tilde{\delta}(s)),  PT(e)\big\rangle = \big\langle (T^*)^{-1}(\tilde{\delta}(s)), T(e)\big\rangle=\langle \tilde{\delta}(s), e\rangle,\]
as required. This finishes the proof in the case when $\Lambda \in \TT_{\alpha}$ for some countable successor ordinal $\alpha$, since then $\Lambda$ is compact, and therefore $\rho$ is uniformly weak$^*$ continuous due to its $d$--$w^*$ continuity, where $d$ is any compatible metric on $\Lambda$ (cf. Remark \ref{rem:uni_weakstar_cont}). 

If $\Lambda=\el$, we finish the proof as follows. Let $\delta$ stand for the standard Dirac mapping ($2^\omega\ni x \mapsto \delta_x\in\M(2^\omega)$), $R \colon \el \rightarrow 2^{\omega}$ be the homeomorphic embedding from Section \ref{sec:prelim}, $S \colon \A_{\el} \rightarrow \C(2^{\omega})$ be the isometry provided by Theorem \ref{A_Lambda is C(2^omega)}, and $S^* \colon \M(2^{\omega}) \rightarrow \A_{\el}^*$ be its adjoint operator. We now consider the function $\tilde{\rho}\colon 2^\omega\to(E^*,w^*)$ defined as 
\[\tilde{\rho}=P^* \circ (T^*)^{-1} \circ S^* \circ \delta.\] 
Note that $\tilde{\rho}$ is continuous and therefore uniformly weak$^*$ continuous. Consequently, it is now enough to show that $\rho=\tilde{\rho}\rstr{R(\el)}$. This, however, follows immediately by the equality $(S^* \circ \delta) (R(s))=\tilde{\delta}_s$, which holds for every $s \in \el$ by Theorem \ref{A_Lambda is C(2^omega)} and Proposition \ref{prop:density}.

\medskip

In order to prove implication (ii)$\Rightarrow$(i), we shall construct a projection $P\colon E\to E$ onto the image $T[\A_\Lambda]$. For this, we define the auxiliary mapping $S \colon E \rightarrow \A_{\Lambda}$ by setting for each $e \in E$ and $s\in\Lambda$: 
\[S(e)_s=\begin{cases}
    \langle \rho({\emptyseq}), e\rangle &,\text{ if } s=\emptyseq,\\
   \big\langle \rho(s)-\rho({pred(s)}), e\big\rangle &,\text{ otherwise}.
\end{cases}\]
It is clear that $S$ is a well-defined linear mapping from $E$ into $\er^\Lambda$. We will show that $S$ is actually a bounded operator into $\A_{\Lambda}$. 

We first need to check that for each $e \in E$ we indeed have $S(e) \in \A_{\Lambda}$. Fix $e\in E$. We find $C \in \er$ such that $\rho[\Lambda]\subseteq C \cdot B_{E^*}$, so we get 
\[\tag{$*$}\norm{S(e)}^{\Lambda}=\sup_{t \in \Lambda} \abs{\sum_{s \preceq t}S(e)_s}=\sup_{t\in\Lambda}\bigg|\big\langle\rho(\emptyseq),e\big\rangle+\Big(\big\langle\rho\big(\langle t(0)\rangle\big),e\big\rangle-\big\langle\rho(\emptyseq),e\big\rangle\Big)+\]
\[+\Big(\big\langle\rho\big(\langle t(0),t(1)\rangle\big),e\big\rangle-\big\langle\rho\big(\langle t(0)\rangle\big),e\big\rangle\Big)+\ldots+\Big(\big\langle\rho(t),e\big\rangle-\big\langle\rho\big(pred(t)\big),e\big\rangle\Big)\bigg|=\]
\[=\sup_{t \in \Lambda} \abs{\langle \rho(t), e \rangle } \leq C \cdot \norm{e}, \]
which proves that $S(e)$ satisfies condition (A.1).

Concerning condition (A.2), we pick $s \in NL(\Lambda)$ and, by a computation similar to ($*$) and using property \ref{tree:unif} from Lemma \ref{lem: the properties of rho}, we get
\[\lim_{n\to\infty}\norm{S(e)\rstr{\Lambda_{s\concat n}}}^{\Lambda}=\lim_{n\to\infty}\sup_{\substack{t\in\Lambda\\s\concat n\preceq t}}\abs{\sum_{s\concat n \preceq u \preceq t} S(e)_u}=\]
\[=\lim_{n\to\infty}\sup_{\substack{t\in\Lambda\\s\concat n\preceq t}}\abs{\langle \rho(t)-\rho(s), e \rangle}=0,\]
as required.

Similarly, using property \ref{tree:len} from Lemma \ref{lem: the properties of rho}, we have
\[\lim_{n\to\infty}\norm{S(e)\rstr\{s\in\Lambda\colon l(s) \geq n\}}^{\Lambda}=\]
\[=\lim_{n\to\infty}\sup \Bigg\{\abs{\sum_{t_1 \preceq s \preceq t_2} S(e)_s}\colon\ t_1, t_2 \in \Lambda,\ l(t_1) = n,\ t_1\preceq t_2\Bigg\}=\]
\[=\lim_{n\to\infty}\sup \bigg\{\abs{\langle \rho(t_2)-\rho\big(pred({t_1})\big), e \rangle}\colon\ t_1, t_2 \in \Lambda,\ l(t_1) = n,\ t_1\preceq t_2\bigg\}=0,\]
and thus also condition (A.3) is satisfied. It follows that indeed $S(e)\in \A_\Lambda$. Moreover, since $\norm{S(e)}^\Lambda\le C\cdot\norm{e}$ and $C$ does not depend on $e$, we get that $S$ is bounded with $\norm{S}\le C$.
%

\medskip

Now, let $S^* \colon \A_{\Lambda}^* \rightarrow E^*$ be the operator adjoint to $S$. Notice that for each $s \in \Lambda$ the functional $S^*(\tilde{\delta}_{s})\in E^*$ coincides with $\rho(s)$: for all $s \in \Lambda$ and $e \in E$, we have
\[\big\langle S^*(\tilde{\delta}_s), e \big\rangle=\big\langle \tilde{\delta}_s, S(e) \big\rangle=\sum_{t \preceq s} S(e)_t= \langle \rho(s), e \rangle,\]
where the last equality follows again from a computation similar to ($*$). We will use this observation to show that $T \circ S\colon E \rightarrow E$ is the desired projection onto the range of $T$. Obviously, $(T \circ S)[E] \subseteq T[\A_{\Lambda}]$. Thus, it is enough to show that $T \circ S$ is the identity on the set $T[\A_\Lambda]$. 

We first check that $T^* \circ S^* \colon \A_{\Lambda}^* \rightarrow \A_{\Lambda}^*$ is the identity mapping on $\A_\Lambda^*$. To this end, note that for any element $s \in \Lambda$ by the above observation we have 
\[(T^* \circ S^*)(\tilde{\delta}_s)=T^*(\rho(s))=\tilde{\delta}_s.\]
Thus, by the linearity, $T^* \circ S^*$ equals to the identity on the set $\big\{\!\pm\!\tilde{\delta}_s\colon s \in \Lambda\big\}$, and hence, by Proposition \ref{prop:density}.(ii), on the whole dual space $\A_\Lambda^*$.

Now, for any elements $\olb{a} \in \A_{\Lambda}$ and $\e \in E^*$, we get
\[\big\langle \e, (T \circ S)\big(T(\olb{a})\big)\big\rangle=\big\langle\big(T^* \circ S^* \circ T^*\big)(\e), \olb{a} \big\rangle=\]
\[=\big\langle\big(\!\id_{\A_\Lambda^*} \circ T^*\big)(\e), \olb{a} \big\rangle=\langle \e, T(\olb{a}) \rangle.\]
Thus, $(T \circ S)\big(T(\olb{a})\big)=T(\olb{a})$ for any $\olb{a}\in \A_\Lambda$, and hence $T\circ S=\id_{T[\A_\Lambda]}$, which finishes the proof of implication (ii)$\Rightarrow$(i).

\medskip

If $T$ is an isometry and $\rho[\Lambda]\subseteq B_{E^*}$, i.e. $C\le 1$, then it is apparent that the projection $T \circ S$ has norm $1$.

If $E$ is moreover a Banach lattice, $T$ is positive, and $\rho[\Lambda] \subseteq E^*_+$, then the mapping $S$ is positive as well, as for any $t \in \Lambda$ and $e \in E_+$ we have
\[\sum_{s \preceq t} S(e)_s=\langle \rho(t), e \rangle\ge0,\]
and so the projection $T \circ S$ is positive, too. This finishes the proof.
\end{proof}

\begin{remark}
It can be checked that in item (ii) of Proposition \ref{prop: complementability of tree spaces}, if for $(s, i) \in\Sigma_m(\Lambda)$ one writes $e_{s, i}=T(\tilde{\chi}_{s, i})$ and $\e_{s, i}=\rho((s, i))$, then the projection $P$ onto the complemented isomorphic copy of $\A_{\Lambda}^m$ in $E$ can be written for each $e \in E$ directly in the following two equivalent ways: 
\begin{equation}
\nonumber
\begin{aligned}
P(e)&=\sum_{i=1}^m\Bigg(\left\langle e^*_{\emptyseq, i}, e \right\rangle e_{\emptyseq, i}+\sum_{s \in\Lambda\setminus\{\emptyseq\}}\left\langle e^*_{s, i}-e^*_{pred(s), i}, e\right\rangle e_{s, i}\Bigg)=\\&=\sum_{i=1}^m\Bigg(\left\langle e^*_{\emptyseq, i}, e \right\rangle e_{\emptyseq, i}+\sum_{s \in NL(\Lambda)} \sum_{n\in\omega} \left\langle e^*_{s\concat n, i}-e^*_{s, i}, e\right\rangle e_{s\concat n, i}\Bigg).
\end{aligned}
\end{equation}
\end{remark}

The following notion is crucial for the main results of this paper.

\begin{definition}\label{def:ctcf}
Let $E$ be a Banach space. Let $\alpha$ be a countable successor ordinal or $\infty$, and $m\in\en$. A \emph{projectional tree of rank $\alpha$ and order $m$} in $E$ is a triple 
$\T=(\Sigma, T, \rho)$ such that:
\begin{enumerate}[(T.1)]
    \item $\Sigma$ is the topological disjoint union of $m$ copies of a tree $\Lambda$ of rank $\alpha$,
    \item $T \colon \A_{\Lambda}^m \rightarrow E$ is an isomorphic embedding,
    \item $\rho\colon\Sigma \rightarrow E^*$ is a uniformly weak$^*$ continuous map onto a bounded subset of $E^*$ such that $T^* \circ \rho =\tilde{\delta}$, where $\tilde{\delta}\colon \Sigma \rightarrow (\A_{\Lambda}^m)^*$ is the canonical embedding which maps each $(s, i)\in\Sigma$ onto $\tilde{\delta}_{s, i}$.    
\end{enumerate}

\medskip
The tree $\T$ is \emph{isometric} if $T$ is an isometry. The infimum of $C>0$ satisfying the inclusion $\rho[\Sigma] \subseteq C \cdot B_{E^*}$ is called the \emph{norm} of the tree and denoted by $\norm{\T}$. 

If, moreover, $E$ is a Banach lattice, we say that the tree $\T$ is 
\begin{itemize}
    \item \emph{positive}, if $T$ is a positive operator and $\rho[\Sigma] \subseteq E^*_+$,
    \item \emph{PNPP}, if $T$ is PNPP and $\rho[\Sigma] \subseteq E^*_+$.
    \end{itemize}  
    \eodef
\end{definition}

Of course, by Theorems \ref{A_lambda is C(omega^alpha)} and \ref{A_Lambda is C(2^omega)}, the Banach spaces $\C([1,\omega^\alpha m])$ and $\C(2^\omega)$ are natural examples of spaces admitting PNPP projectional trees (and, in fact, motivated their definition and name). Let us collect some further important and useful remarks concerning projectional trees.

\begin{remark}
\label{rem:tree sufficient 2}
By Proposition \ref{prop:density}, the condition $T^* \circ \rho =\tilde{\delta}$ from Definition \ref{def:ctcf} can be equivalently stated as that for all $(s, i),(t,j) \in \Sigma$ we have
\[\big\langle \rho(s, i), T(\tilde{\chi}_{t, j}) \big\rangle=\big\langle \tilde{\delta}_{s, i}, \tilde{\chi}_{t, j}\big\rangle.\]
\end{remark}

\begin{remark}
\label{rem:tree sufficient 1}
For a tree $\Lambda\in\TT$, $m\in\en$, and a Banach space $E$, in order to check that an operator $T \colon \A_{\Lambda}^m \rightarrow E$ is an isomorphic embedding,  by Lemma \ref{lem:fin-approx} it is enough to show that there exist $C_1, C_2>0$ such that, for each element $\big((a_{s, i})_{s \in \Lambda}\big)_{1\le i\le m} \in \A_\Lambda^m$ and trunk $F\in\mathfrak{Tr}(\Lambda)$, we have
\[\frac{1}{C_1}\max_{i=1,\ldots,m}\norm{(a_{s, i})_{s \in \Lambda}\rstr F}^{\Lambda} \leq \norm{\sum_{i=1}^m\sum_{s \in F} a_{s, i} T(\tilde{\chi}_{s,i})} \leq C_2\max_{i=1,\ldots,m}\norm{(a_{s, i})_{s \in \Lambda}\rstr F}^{\Lambda}.\]
\end{remark}

\begin{remark}
    Notice that if a projectional tree in an AM-space (e.g. in a $\C(K)$-space) is PNPP, then it is isometric, see Lemma \ref{lem:ordered normed spaces and PNPP} and its proof.
\end{remark}

The following theorems, being versions of Theorems \ref{thm:mainA} and \ref{thm:mainB} from Introduction and also the main results of this section, connect the existence of projectional trees with the complementability of separable spaces $\C(K)$ in Banach spaces. 
\begin{thm}
\label{main characterization-zerodim}
Let $E$ be a Banach space. 
\begin{enumerate}[(1)]
    \item If $\alpha$ is a countable ordinal and $m\in\en$, then  the following assertions are equivalent:
    \begin{enumerate}[(i)]
        \item the space $\C([1, \omega^\alpha m])$ is (isometrically) isomorphic to a complemented subspace of $E$, 
        \item $E$ admits an (isometric) projectional tree of rank $\alpha+1$ and order $m$.
    \end{enumerate}
    \vspace{1mm}
    \item The following assertions are equivalent:
    \begin{enumerate}[(i)]
        \item the space $\C(2^\omega)$ is (isometrically) isomorphic to a complemented subspace of $E$,
        \item $E$ admits an (isometric) projectional tree of rank $\infty$ and order $1$.
    \end{enumerate}
\end{enumerate}

In both cases, we moreover have:
\begin{enumerate}[(a)]
    \item If the tree is isometric and has norm $1$, then the projection has norm $1$.
    \item If $E$ is a Banach lattice and the tree is positive, then the projection is positive.
\end{enumerate}
\end{thm}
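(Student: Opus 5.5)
This theorem follows by combining the isometric identifications of Section~\ref{sec:new_descriptions} with Proposition~\ref{prop: complementability of tree spaces}. The plan is to translate complementability of $\C([1,\omega^\alpha m])$ or $\C(2^\omega)$ into complementability of the corresponding tree space $\A_\Lambda^m$, and then read off a projectional tree from Proposition~\ref{prop: complementability of tree spaces}.

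For (1), fix any tree $\Lambda\in\TT_{\alpha+1}$. Such a tree exists: build it by transfinite recursion. For successor $\alpha=\beta+1$, hang copies of a rank-$(\beta+1)$ tree below each $\langle n\rangle$. For limit $\alpha$, hang trees of ranks $\beta_n+1$ with $\beta_n\nearrow\alpha$. The constant/strictly increasing rank condition is then automatic. By Theorem~\ref{A_lambda is C(omega^alpha)} there is a surjective lattice isometry $\A_\Lambda^m\to\C([1,\omega^\alpha m])$. Hence $\C([1,\omega^\alpha m])$ is (isometrically) isomorphic to a complemented subspace of $E$ if and only if $\A_\Lambda^m$ is. By Proposition~\ref{prop: complementability of tree spaces}, this happens if and only if there are an isomorphic (resp.\ isometric) embedding $T\colon\A_\Lambda^m\to E$ and a uniformly weak$^*$ continuous bounded $\rho\colon\Sigma_m(\Lambda)\to E^*$ with $T^*\circ\rho=\tilde\delta$. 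This is exactly a projectional tree $(\Sigma_m(\Lambda),T,\rho)$ of rank $\alpha+1$ and order $m$ in the sense of Definition~\ref{def:ctcf}, isometric when $T$ is.

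For the converse direction the tree in Definition~\ref{def:ctcf} may be built on some other $\Lambda'\in\TT_{\alpha+1}$. This causes no problem: Proposition~\ref{prop: complementability of tree spaces} applied to $\Lambda'$ shows that $\A_{\Lambda'}^m$ is complemented in $E$, and Theorem~\ref{A_lambda is C(omega^alpha)} applied to $\Lambda'$ gives $\A_{\Lambda'}^m\cong\C([1,\omega^\alpha m])$. So the direction (ii)$\Rightarrow$(i) never needs a fixed $\Lambda$.

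Part (2) is identical, with $\Lambda=\el$, the only tree of rank $\infty$, $m=1$, and Theorem~\ref{A_Lambda is C(2^omega)} in place of Theorem~\ref{A_lambda is C(omega^alpha)}.

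The addenda come from clauses (a) and (b) of Proposition~\ref{prop: complementability of tree spaces}. For (a), suppose the tree is isometric with norm $1$. Then $\rho[\Sigma]\subseteq C B_{E^*}$ holds for every $C>1$, and since $\rho$ takes values that are closed under this bound, $\rho[\Sigma]\subseteq B_{E^*}$. The projection $T\circ S$ constructed there then satisfies $\norm{T\circ S}\le\norm{S}\le1$. For (b), positivity of $T$ and $\rho[\Sigma]\subseteq E^*_+$ give a positive projection.

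I expect no genuine obstacle. The only points needing care are the existence of a tree of each rank $\alpha+1$ (for the forward direction) and the infimum in the definition of the norm of the tree (for (a)), which is handled by the closedness of balls.
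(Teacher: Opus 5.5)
Your proposal is correct and follows the paper's proof, which likewise just combines Proposition~\ref{prop: complementability of tree spaces} with Theorems~\ref{A_lambda is C(omega^alpha)} and~\ref{A_Lambda is C(2^omega)}, and reads off (a) and (b) from the corresponding clauses of that proposition. The extra points you check are left implicit in the paper: that some tree in $\TT_{\alpha+1}$ exists, that (ii)$\Rightarrow$(i) works for whichever tree carries the projectional tree, and that norm $1$ taken as an infimum still forces $\rho[\Sigma]\subseteq B_{E^*}$. You handle all three correctly.
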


\begin{proof}
    Items (1) and (2) immediately follow by combining Proposition \ref{prop: complementability of tree spaces} with Theorems \ref{A_lambda is C(omega^alpha)} and \ref{A_Lambda is C(2^omega)}. Item (a) follows from Proposition \ref{prop: complementability of tree spaces}.(a) and item (b) from Proposition \ref{prop: complementability of tree spaces}.(b).
\end{proof}

The isomorphic version of the above characterization holding for \textit{all} compact metric spaces $K$ reads as follows.

\begin{thm}
\label{main characterization}
Let $E$ be a Banach space and $L$ be a metric compact space. Then, the following assertions are equivalent:
\begin{enumerate}[(i)]
    \item $\C(L)$ is isomorphic to a complemented subspace of $E$,
    \item $E$ admits a projectional tree of rank $ht(L)$ and order $1$.
\end{enumerate}
\end{thm}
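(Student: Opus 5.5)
The plan is to reduce Theorem \ref{main characterization} to Theorem \ref{main characterization-zerodim} using the classical isomorphic classifications of separable $\C(L)$-spaces. We split into the scattered and the non-scattered case according to $ht(L)$.

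\emph{Case 1: $L$ countable.} Then $L$ is scattered and $ht(L)=\alpha+1$ for a countable ordinal $\alpha$. By the Mazurkiewicz--Sierpi\'nski theorem, $L$ is homeomorphic to $[1,\omega^\alpha m]$ with $m=\abs{L^{(\alpha)}}$. By the Bessaga--Pe\l czy\'nski classification \cite{BessagaPelcynski_classification}, $\C([1,\omega^\alpha m])\simeq\C([1,\omega^\alpha])$, hence $\C(L)\simeq\C([1,\omega^\alpha])$. Now we apply Theorem \ref{main characterization-zerodim}(1) in its isomorphic form with order $1$:
\begin{itemize}
\item $\C([1,\omega^\alpha])$ is isomorphic to a complemented subspace of $E$
\item if and only if $E$ admits a projectional tree of rank $\alpha+1=ht(L)$ and order $1$.
\end{itemize}
Since being isomorphic to a complemented subspace of $E$ depends only on the isomorphism class, this gives (i)$\Leftrightarrow$(ii).

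\emph{Case 2: $L$ uncountable.} Then $ht(L)=\infty$, because a compact metric space is scattered iff it is countable. Miljutin's theorem \cite[Theorem 5.6]{Pelczynski_book_1968} gives $\C(L)\simeq\C(2^\omega)$. Theorem \ref{main characterization-zerodim}(2), isomorphic version, then says that $\C(2^\omega)$ is isomorphic to a complemented subspace of $E$ iff $E$ admits a projectional tree of rank $\infty$ and order $1$. As before, the property transfers across the isomorphism $\C(L)\simeq\C(2^\omega)$.

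The only delicate point is making sure the isomorphic reading of Theorem \ref{main characterization-zerodim} is exactly what is used. That reading is Proposition \ref{prop: complementability of tree spaces} in the non-isometric version, combined with the isometries $\A_\Lambda\cong\C([1,\omega^\alpha])$ for $\Lambda\in\TT_{\alpha+1}$ (Theorem \ref{A_lambda is C(omega^alpha)}) and $\A_\el\cong\C(2^\omega)$ (Theorem \ref{A_Lambda is C(2^omega)}). Definition \ref{def:ctcf} fixes only the rank of the tree $\Lambda$, not which tree of that rank is used. So it should be stated explicitly that a projectional tree of rank $\alpha+1$ built on any $\Lambda\in\TT_{\alpha+1}$ yields a complemented copy of $\A_\Lambda\cong\C([1,\omega^\alpha])$. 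Conversely, a complemented copy of $\C(L)$ gives one of $\A_\Lambda$ for some (any) $\Lambda\in\TT_{\alpha+1}$, which is nonempty by the Corollary following Theorem \ref{A_lambda is C(omega^alpha)}. I expect no real obstacle beyond this bookkeeping and citing the two classical classification theorems correctly.
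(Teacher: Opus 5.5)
Your reduction is the paper's own proof. You split into countable and uncountable $L$, use Mazurkiewicz--Sierpi\'nski plus Bessaga--Pe\l czy\'nski (resp.\ Miljutin) to replace $\C(L)$ by $\C([1,\omega^\alpha])$ (resp.\ $\C(2^\omega)$), and then apply Theorem~\ref{main characterization-zerodim} with order $1$. Your remark that any $\Lambda\in\TT_{\alpha+1}$ may serve as the underlying tree is correct bookkeeping.

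One step fails as stated, both in your proposal and in the paper's one-line proof: the claim $\C([1,\omega^\alpha m])\simeq\C([1,\omega^\alpha])$. Bessaga--Pe\l czy\'nski gives this only for $\alpha\ge 1$, i.e.\ for infinite countable $L$. For $\alpha=0$, $L$ is finite with $m=\abs{L}$ points, so $\C(L)=\er^m$. For $m\ge 2$ this is not isomorphic to $\er=\C([1,\omega^0])$.

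In that case the equivalence itself is false. Take $E=\er$. It admits a projectional tree of rank $1$ and order $1$: let $\Lambda=\Upsilon$, $T=\id_\er$ (via $\A_\Upsilon\cong\er$), and $\rho(\emptyseq)=\id_\er\in\er^*$, so that $\langle\rho(\emptyseq),T(\tilde\chi_{\emptyseq})\rangle=1=\tilde\delta_{\emptyseq}(\tilde\chi_{\emptyseq})$. Yet $\er^2=\C(L)$ for a two-point $L$ (which has $ht(L)=1$) is not isomorphic to any subspace of $\er$.

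Your Case~1 argument is therefore valid exactly when $L$ is infinite. For finite $L$ the correct statement uses order $\abs{L}$ rather than order $1$, which is Theorem~\ref{main characterization-zerodim}(1) with $\alpha=0$. With the hypothesis that $L$ is infinite added (or with that modification for finite $L$), your proof is complete.
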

\begin{proof}
The result follows immediately by Theorem \ref{main characterization-zerodim} and the fact that $\C(L)$ is isomorphic either to $\C(2^\omega)$, if $L$ is uncountable (by Miljutin's theorem), or to $\C([1,\omega^\alpha])$, where $\alpha=ht(L)-1$, if $L$ is countable (by Bessaga--Pe\l czy\'nski's classification).
\end{proof}

\section{Applications to $\C(K)$-spaces\label{sec:apps_to_ck}}

In this section we will present two applications of the results obtained in Section \ref{sec:ctcf} to the special case of Banach spaces $E$ of the form $\C(K)$.

\subsection{A variant of the theorem of Holszty\'nski\label{sec:holsztynski}}

First, we prove the following variant of the Holszty\'nski theorem (see \cite{Holsztynski1966} and \cite{Pelczynski_separable}) for the case of isometric embeddings onto complemented subspaces. Note that this time we do not assume that the space $L$ is metric. 

\begin{thm}
\label{holsztynski}
Let $K$ and $L$ be compact spaces. Let $T\colon\C(L) \rightarrow\C(K)$ be an isometry onto a complemented subspace $E$ of $\C(K)$ and $P\colon\C(K) \rightarrow E$ be a norm-$1$ projection onto $E$. Then, there exist a closed subset $F$ of $K$, a continuous surjection $\rho\colon F \rightarrow L$, and continuous mappings $\sigma\colon F \rightarrow\{-1,1\}$ and $\phi\colon L \rightarrow S_{\M(F)}$ such that:
\begin{enumerate}[(i)]
    \item for each $x \in L$ it holds
    \[\supp(\phi(x))\subseteq\rho^{-1}(x),\]
    \item for each $f \in \C(L)$ and $y \in F$ it holds
    \[T(f)(y)=\sigma(y) f(\rho(y)),\]
    \item for each $g \in \C(K)$ and $y \in F$ it holds
    \[P(g)(y)=\sigma(y) \big\langle \phi(\rho(y)), g \big\rangle.\]
\end{enumerate}
Moreover,
\begin{enumerate}[(a)]
    \item if $T$ is positive, then $\sigma(y)=1$ for each $y \in F$,
    \item if both $T$ and $P$ are positive, then $\phi(x)\in \P(F)$ for each $x \in L$.
\end{enumerate}
\end{thm}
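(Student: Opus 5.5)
The plan is to take $\phi(x)$ to be the functional "evaluate at $x$" transported to $\C(K)$ through $P$ and $T^{-1}$, and then to read off $F$, $\rho$ and $\sigma$ from the supports of these measures.

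\emph{Definition of $\phi$.} Let $T^{-1}\colon E\to\C(L)$ be the inverse isometry. For $x\in L$ put
\[\mu_x=\delta_x\circ T^{-1}\circ P\in\M(K),\qquad\text{i.e.}\quad \mu_x(g)=\big(T^{-1}Pg\big)(x).\]
Then $\norm{\mu_x}\le1$. For any $f\in\C(L)$ with $f(x)=1=\norm{f}$ we get $\mu_x(Tf)=f(x)=1$, so $\norm{\mu_x}=1$. The map $x\mapsto\mu_x$ is weak$^*$ continuous, since $x\mapsto (T^{-1}Pg)(x)$ is continuous for each $g$. We will set $\phi(x)=\mu_x$, and identity (iii) will come out of (ii) at the end.

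\emph{Main step: $T$ acts like evaluation at $x$ on $\supp\mu_x$.} Write $d\mu_x=h_x\,d|\mu_x|$ with a Borel $h_x\in\{-1,1\}$. Take $f\in\C(L)$ with $f(x)=1=\norm{f}$. Then $\int Tf\,h_x\,d|\mu_x|=1=|\mu_x|(K)$ and $|Tf|\le1$, so $Tf=h_x$ holds $|\mu_x|$-a.e. Continuous functions that agree $|\mu_x|$-a.e. agree on $\supp\mu_x$. Hence every such peak image $Tf$ equals one and the same function $\sigma_x$ on $\supp\mu_x$, with $|\sigma_x|=1$ there. In particular $\sigma_x=T\chi_L$ on $\supp\mu_x$, which is continuous.

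Next take $g$ with $\norm{g}\le1$ vanishing on a neighbourhood $U$ of $x$, and a peak function $f$ supported in $U$. Then $\norm{f\pm g}\le1$ and $(f\pm g)(x)=1$, so by the above $T(f\pm g)=\sigma_x$ on $\supp\mu_x$, and therefore $Tg=0$ there. Such $g$ are dense in $\{g\colon g(x)=0\}$, so $Tg=0$ on $\supp\mu_x$ whenever $g(x)=0$. Writing $f=f(x)\chi_L+(f-f(x)\chi_L)$ gives
\[Tf(y)=T\chi_L(y)\,f(x)\qquad\text{for all } f\in\C(L),\ y\in\supp\mu_x.\]
I expect this step to be the main obstacle. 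The point needing care is that the density $h_x$ is only Borel, and the argument passes to the closed support only via continuity of the functions $Tf$.

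\emph{Construction of $F$, $\rho$, $\sigma$ and the remaining items.}
\begin{itemize}
\item Let $F$ be the closure of $\bigcup_{x}\supp\mu_x$.
\item For $y\in F$, take a net $y_\alpha\in\supp\mu_{x_\alpha}$ converging to $y$ and, by compactness, pass to a subnet with $x_\alpha\to x'$. Continuity gives $|Tf(y)|=|f(x')|$ for all $f$, and since points of $L$ are separated by $\C(L)$, $x'$ is determined by $y$. Set $\rho(y)=x'$ and $\sigma=T\chi_L\rstr F$; the latter is continuous with values in $\{-1,1\}$.
\item This yields (ii) on all of $F$. Continuity of $\rho$ follows because $|Tf|=|f\circ\rho|$ on $F$ for every $f$.
\item $\rho$ is surjective because each $\supp\mu_x$ is nonempty, and $\rho=x$ on $\supp\mu_x$, which gives (i).
\item For (iii): $Pg=T(T^{-1}Pg)$, so by (ii), $Pg(y)=\sigma(y)\,(T^{-1}Pg)(\rho(y))=\sigma(y)\,\mu_{\rho(y)}(g)$.
\end{itemize}

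\emph{Positivity clauses.} If $T$ is positive, then $T\chi_L\ge0$, so $\sigma\equiv1$, which is (a). If moreover $P$ is positive, then $T^{-1}$ is positive on $E$: $T^{-1}e\ge0$ can be read off $e\ge0$ through (ii), because $\rho$ is surjective. Hence $\mu_x\ge0$, and with $\norm{\mu_x}=1$ this gives $\mu_x\in\P(F)$, which is (b).
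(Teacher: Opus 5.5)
Your proof is correct, and it takes a genuinely different, more elementary route than the paper's. You use the same measure as the paper, $\phi(x)=P^*\big((T^*)^{-1}(\delta_x)\big)$, i.e. $g\mapsto (T^{-1}Pg)(x)$. From there the two arguments diverge.

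The paper defines $F$ through the bidual: $F$ is the set of all $y\in K$ with $T^*(\delta_y)=\pm\delta_x$ for some $x\in L$, described via $T^{**}(\chi_{\{x\}})$. To show that $\abs{\phi(x)}$ is concentrated on $\rho^{-1}(x)$, it identifies $T^{**}(\chi_{\{x\}})$ with a bounded Borel function on $K$. This is Lemma~\ref{lem:bidual of C(K) and Borel functions on K}, which rests on:
\begin{itemize}
\item the maximum principle for affine functions of the first Borel class (Dost\'al--Spurn\'y),
\item Koumoullis' point-of-continuity result,
\item and Ludv\'{\i}k--Spurn\'y.
\end{itemize}
Closedness of $F$ and continuity of $\rho$ and $\sigma$ are then proved in separate steps.

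You replace all of this with the classical norm-attainment argument. Since $\norm{\phi(x)}=1=\phi(x)(Tf)$ for every peak function $f$ at $x$, each such $Tf$ equals the sign density of $\phi(x)$ almost everywhere, hence on $\supp\phi(x)$ by continuity. The perturbation $f\pm g$, with $g$ vanishing near $x$, plus density then gives $Tf=f(x)\,T\chi_L$ on $\supp\phi(x)$. Taking $F$ to be the closure of $\bigcup_{x}\supp\phi(x)$ makes closedness automatic and gives (i) essentially by construction. I checked the remaining steps and they are sound: well-definedness of $\rho$ via limits of nets, continuity of $\rho$ from $\abs{Tf}=\abs{f\circ\rho}$, formula (iii) from $Pg=T(T^{-1}Pg)$, and positivity of $T^{-1}$ via surjectivity of $\rho$.

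The two choices of $F$ differ but both are valid:
\begin{itemize}
\item Your $F$ is contained in the paper's $F$, since on it $T^*(\delta_y)=\sigma(y)\delta_{\rho(y)}$. It is the minimal natural choice.
\item The paper's $F$ is the maximal set on which the Holszty\'nski representation (ii) holds, so its version carries slightly more information.
\item The theorem only asserts existence, so either suffices.
\end{itemize}

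What your route buys is a self-contained proof with no descriptive-set-theoretic or bidual input. In particular, it shows that the recently discovered maximum principle, which the introduction presents as a key ingredient, is not actually needed for this theorem.
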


Before proving the result, we first need to establish the following key lemma, which enables us to identify certain elements of the bidual space of $\C(K)$ as bounded Borel functions on $K$. Note that, in general, the space $\C(K)^{**}$ is strictly larger than the space of bounded Borel functions on $K$, see e.g. \cite[Section 6.6]{dalesbookdual}. Also, recall that a function $h\colon X \rightarrow \er$ on a topological space $X$ is \emph{of the first Borel class}, if for each open subset $U$ of $\er$ the set $h^{-1}[U]$ is a countable union of differences of closed subsets of $X$ (see \cite{spurny-amh} or \cite[Definition 5.13]{lmns} for more information).

\begin{lemma}
\label{lem:bidual of C(K) and Borel functions on K}    
Let $h^{**} \in \C(K)^{**}$ be a function which is of the first Borel class on the space $(\M(K),w^*)$. Then, for each $\mu \in \M(K)$, it holds
\[\langle h^{**}, \mu \rangle=\int_{K} h d\mu,\]
where the bounded Borel function $h\colon K \rightarrow \er$ is defined by
\[h(y)=\langle h^{**}, \delta_y \rangle\]
for all $y\in K$.
\end{lemma}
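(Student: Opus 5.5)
Fix $h^{**}$ as in the statement and set $h(y)=\langle h^{**},\delta_y\rangle$. Since $y\mapsto\delta_y$ is a homeomorphic embedding of $K$ into $(\M(K),w^*)$, $h$ is the restriction of a first Borel class function to a closed subset. Hence $h$ is itself of the first Borel class on $K$, in particular bounded (by $\norm{h^{**}}$) and Borel measurable. So the right-hand side $\int_K h\,d\mu$ makes sense for every $\mu\in\M(K)$.

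The plan is to compare the two functionals $\mu\mapsto\langle h^{**},\mu\rangle$ and $\mu\mapsto\int_K h\,d\mu$. Both are linear on $\M(K)$ and bounded by $\norm{h^{**}}\norm{\mu}$, and both are affine functions on $B_{\M(K)}$. Put
\[
u(\mu)=\langle h^{**},\mu\rangle-\int_K h\,d\mu .
\]
The task is to show $u\equiv 0$ on $B_{\M(K)}$; the conclusion then follows on all of $\M(K)$ by scaling.

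The function $u$ vanishes on the extreme points $\ext(B_{\M(K)})=\{\pm\delta_y\colon y\in K\}$, by the definition of $h$ and linearity. The first summand of $u$ is of the first Borel class on $(B_{\M(K)},w^*)$ by hypothesis. For the second summand, the map $\mu\mapsto\int h\,d\mu$, with $h$ of the first Borel class on $K$, is again of the first Borel class on $(B_{\M(K)},w^*)$. To see this, write $h$ as a pointwise limit of a uniformly bounded sequence $(h_n)$ of functions that are differences of semicontinuous functions, or more carefully as a uniform limit of such. Then $\mu\mapsto\int h_n\,d\mu$ is a difference of semicontinuous affine functions on the positive part of $B_{\M(K)}$, and dominated convergence passes to the limit. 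The needed descriptions of Baire-one-type classes are in the references \cite{spurny-amh} and \cite[Chapter 5]{lmns}. Thus $u$ is an affine function of the first Borel class on the compact convex set $B_{\M(K)}$ that vanishes on its extreme points.

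Now apply the maximum principle for affine functions of the first Borel class from \cite{dosp}. Such a function attains its supremum and its infimum at extreme points, or at least satisfies $\sup_{B}u=\sup_{\ext B}u$. Applying this to $u$ and $-u$ gives $u\equiv 0$, which is the claim.

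The main obstacle is the step showing that $\mu\mapsto\int_K h\,d\mu$ is of the first Borel class on $(B_{\M(K)},w^*)$, together with checking that the precise hypotheses of the maximum principle in \cite{dosp} are met. This means settling whether the first Borel class in the sense of differences of closed sets, with $K$ possibly non-metrizable, is exactly the class required there. If the direct route is awkward, I would instead use the following decomposition. Split $\mu=\mu^+-\mu^-$ and treat only probability measures, on which $\mu\mapsto\int h\,d\mu$ is the canonical extension of $h|_K$ to $\P(K)$. Then invoke the known result that for $h$ of the first Borel class on $K$ this extension is of the first Borel class on $\P(K)$ (the barycentric calculus for Borel-class functions, \cite[Chapter 5]{lmns}). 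Finally, apply the maximum principle on the simplex-like set $\P(K)$, whose extreme points are exactly the Dirac measures.
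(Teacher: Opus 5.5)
Your outline is the paper's proof: compare $h^{**}$ with $\tilde h(\mu)=\int_K h\,d\mu$, note that the difference vanishes on $\ext B_{\M(K)}=\{\pm\delta_y\colon y\in K\}$, and conclude with the Dost\'al--Spurn\'y maximum principle. However, the two steps you flag as obstacles carry all the content, and your proposal settles neither.

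\textbf{The first-class property of $\tilde h$.} Your sketch for this step does not work.
A dominated pointwise limit of first-class functions is in general only of the second class. For example, $\chi_{\mathbb Q}$ is an increasing limit of characteristic functions of finite sets. So passing to the limit in $\int h_n\,d\mu$ loses exactly the property you need.
The uniform version is not available either, because a general first-class function need not be a uniform limit of differences of semicontinuous functions.
For non-metrizable $K$ you also cannot take continuous $h_n$: already $\chi_{\{x\}}$ with $x$ not a $G_\delta$-point is not Baire-one.
Finally, on signed measures $\mu\mapsto\mu(F)$ is not semicontinuous, so nothing transfers from the positive cone to $B_{\M(K)}$.

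The paper handles this step by quoting \cite[Lemma 3.2]{lusp}. If you want a self-contained argument, your $\P(K)$ variant can be completed as follows.
Countable unions of sets from the algebra generated by the open sets have the reduction property. Hence a bounded first-class $h$ is a uniform limit of finitely-valued functions $\sum_k c_k\chi_{A_k}$, where each $A_k$ and each $K\setminus A_k$ is a countable union of differences of closed sets.
For such a set $A$, both $\mu\mapsto\mu(A)$ and $\mu\mapsto\mu(K\setminus A)=1-\mu(A)$ are increasing limits of differences of the upper semicontinuous maps $\mu\mapsto\mu(F)$, $F$ closed, on $\P(K)$.
Therefore all sets $\{\mu(A)>c\}$ and $\{\mu(A)<c\}$ are countable unions of differences of closed subsets of $\P(K)$, so $\mu\mapsto\mu(A)$ is of the first class there. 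Uniform limits preserve this class.

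\textbf{The hypotheses of the maximum principle.} \cite[Corollary 1.5(b)]{dosp} is formulated for affine functions with the point of continuity property. You need \cite[Theorem 2.3]{koumou}, which says that functions of the first Borel class have this property, exactly as the paper uses it.

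With these two inputs, your argument goes through either on $B_{\M(K)}$ or on $\P(K)$ followed by $\mu=\mu^+-\mu^-$ and scaling. The $\P(K)$ version is a harmless variant that avoids signed measures in the Borel-class step.
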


\begin{proof}
We first recall that, since the function $h\colon  K \rightarrow \er$ is bounded and Borel, the $\er$-valued mapping
\[\tilde{h}\colon\M(K)\ni \mu \longmapsto\int_{K} h d\mu\]
is a bounded linear functional on $\M(K)$, i.e. an element of $\C(K)^{**}$. We will show that $\tilde{h}$ coincides with $h^{**}$.  

Firstly, we observe that the function $h$ is of the first Borel class on $K$, too, since the mapping $\delta\colon  K\ni y\mapsto \delta_y\in \M(K)$ is a homeomorphic embedding of $K$ onto a compact subspace of $\M(K)$ endowed with the weak$^*$ topology. This implies that $\tilde{h}$ is also of the first Borel class on $\M(K)$, see \cite[Lemma 3.2]{lusp}, and hence, so is the difference $h^{**}-\tilde{h}$. Consequently, 
we have
\[\sup_{\mu \in B_{\M(K)}} \abs{\langle h^{**}, \mu \rangle-\tilde{h}(\mu)}=\sup_{\mu\in\ext(B_{\M(K)})}\abs{\langle h^{**}, \mu \rangle-\tilde{h}(\mu)},\]
which follows from \cite[Corollary 1.5.(b)]{dosp}, since each function of the first Borel class has the so-called point of continuity property, by \cite[Theorem 2.3]{koumou}. We further recall the standard fact that $\ext(B_{\M(K)})=\delta[K] \cup -\delta[K]$, see e.g. \cite[Section 4.3]{lmns}. 
Altogether, we get
\begin{equation}
\nonumber
\begin{aligned}
\norm{h^{**}-\tilde{h}}&=\sup_{\mu \in B_{\M(K)}} \abs{\langle h^{**}, \mu \rangle-\tilde{h}(\mu)}=\sup_{\mu\in\ext(B_{\M(K)})}\abs{\langle h^{**}, \mu \rangle-\tilde{h}(\mu)}=\\
&=\sup_{\mu \in \delta[K] \cup -\delta[K]} \abs{\langle h^{**}, \mu \rangle-\tilde{h}(\mu)}=\sup_{y \in K} \abs{\langle h^{**}, \delta_y \rangle-\tilde{h}(\delta_y)}=\\&=\sup_{y \in K} \abs{h(y)-h(y)}=0,    
\end{aligned}
\end{equation}
as required.
\end{proof}

\begin{proof}[Proof of Theorem \ref{holsztynski}.]
Let $T^*\colon E^{*} \rightarrow \M(L)$, $T^{**}\colon\C(L)^{**} \rightarrow E^{**}$, and $P^{*}\colon E^{*} \rightarrow \M(K)$ be the respective adjoint mappings. Note that $T^*$ and $T^{**}$ are isometries. Moreover, observe that for each $x\in L$ the function $\chi_{\{x\}}$ may be identified with the functional $\widehat{\chi}_{\{x\}}\in \M(L)^*$, given for every $\mu\in\M(L)$ by the formulas 
\[\widehat{\chi}_{\{x\}}(\mu)=\int_L\chi_{\{x\}}d\mu=\mu(\{x\}).\]
Finally, we recall the well-known fact that the Hahn--Banach theorem implies that $E^{**}$ can be isometrically identified with a subspace of $\C(K)^{**}$.

Put
\[F=\Big\{y \in K\colon\text{ there exists } x \in L \text{ such that } \abs{\left\langle T^{**}(\chi_{\{x\}}), \delta_y\right\rangle}=1\Big\};\]
observe that for each $y\in K$ we have $y\in F$ if and only if there are $x\in L$ and $\alpha\in\{-1,1\}$ such that $T^*(\delta_y)=\alpha\delta_x$. 

We define the required mappings $\rho\colon F \rightarrow L$ and $\phi\colon L \rightarrow S_{\M(K)}$ by setting for every $y\in F$ and $z\in L$
\[ \rho(y)=\text{the unique point }x \in L\text{ such that }\abs{\left\langle T^{**}(\chi_{\{x\}}), \delta_y \right\rangle}=1\]
and
\[\phi(z)=P^{*}\big((T^{*})^{-1}(\delta_z)\big).\]

\medskip

It is clear that $\phi$ is a well-defined continuous mapping into $B_{\M(K)}$. We now check further required properties of the above-defined objects.

\medskip

\noindent $\bullet$ \textit{$\rho$ is well-defined.} 

We need to show that for each $y \in F$ there is exactly one $x \in L$ such that $\abs{\left\langle T^{**}(\chi_{\{x\}}), \delta_y \right\rangle}=1$. Thus, assume that there exist $y\in F$ and two distinct points $x_1, x_2 \in L$ which satisfy $\abs{\left\langle T^{**}(\chi_{\{x_i\}}), \delta_y\right\rangle}=1$ for each  $i=1, 2$. We find $\alpha_1, \alpha_2 \in\{-1,1\}$ such that 
\[\alpha_i\cdot\left\langle T^{**}(\chi_{\{x_i\}}), \delta_y\right\rangle=1\]
for each $i=1, 2$. As $\delta_y\in \M(K)$ and $\norm{\delta_y}=1$, we get 
\[\norm{T^{**}\big(\alpha_1 \chi_{\{x_1\}}+\alpha_2 \chi_{\{x_2\}}\big)} \geq\abs{\left\langle T^{**}\big(\alpha_1 \chi_{\{x_1\}}+\alpha_2 \chi_{\{x_2\}}\big),\ \delta_y\right\rangle}=\]
\[=\abs{\alpha_1\cdot\left\langle T^{**}(\chi_{\{x_1\}}),\delta_y\right\rangle+\alpha_2\cdot\left\langle T^{**}(\chi_{\{x_2\}}), \delta_y\right\rangle}=\abs{1+1}=2.\]
On the other hand, it is clear that $\norm{\alpha_1 \chi_{\{x_1\}}+\alpha_2 \chi_{\{x_2\}}}=1$, which contradicts the fact that $T$ is an isometry. Hence, $\rho$ is indeed a well-defined mapping.

\medskip

\noindent $\bullet$ \textit{For every $x\in L$ we have $\norm{\phi(x)}=1$, i.e. $\phi(x)\in S_{\M(K)}$.} 

Let $x\in L$. For each $f \in \C(L)$ we have $Tf \in E$ and hence $PTf=Tf$, so, consequently, 
\[\tag{$*$}\left\langle P^*\big((T^*)^{-1}(\delta_x)\big),\ Tf\right\rangle=\left\langle (T^*)^{-1}(\delta_x),\ PTf\right\rangle=\left\langle (T^*)^{-1}(\delta_x),\ Tf\right\rangle=f(x).\]
Let $(f_{\alpha})_{\alpha\in A}$ be a net of functions in $\C(L)$ converging weak$^*$ to $\chi_{\{x\}}$ in $\C(L)^{**}$ and such that for each $\alpha\in A$ we have $f_{\alpha}(x)=1$. Then, since $T^{**}$ is weak$^*$--weak$^*$ continuous and $T^{**}(f_\alpha)=T(f_\alpha)$ for each $\alpha\in A$, the net $(T(f_{\alpha}))_{\alpha\in A}$ converges weak$^*$ to $T^{**}(\chi_{\{x\}})$ in $E^{**}$. Combining these facts with ($*$), we get
\[1=\lim_{\alpha\in A} f_{\alpha}(x)=\lim_{\alpha\in A} \left\langle P^*\big((T^*)^{-1}(\delta_x)\big),\ T(f_{\alpha}) \right\rangle=\]
\[=\left\langle P^{*}\big((T^*)^{-1}(\delta_x)\big),\ T^{**}(\chi_{\{x\}})\right\rangle=\left\langle T^{**}(\chi_{\{x\}}), \phi(x)\right\rangle=\left\langle\chi_{\{x\}},T^*(\phi(x))\right\rangle=\]
\[=T^*(\phi(x))(\{x\}),\]
hence $\norm{T^*(\phi(x))}\ge1$. However, as $T^*$ is an isometry and $\norm{\phi(x)}\le1$, we get that $\norm{T^*(\phi(x))}=1$, and hence $\norm{\phi(x)}=1$.

\medskip

\noindent $\bullet$ \textit{For each $x \in L$, $T^{**}(\chi_{\{x\}})$ can be identified with a $1$-bounded Borel $\er$-valued function on $K$, i.e., there is a Borel function $h_x\colon K\to\er$ such that $T^{**}(\chi_{\{x\}})(\mu)=\int_Kh_xd\mu$ for every $\mu\in\M(K)$ and $\abs{h_x(y)}\le1$ for every $y\in K$.}

Let $x\in L$. We first note that the characteristic function $\chi_{\{x\}}\colon  L \rightarrow \er$ is clearly of the first Borel class. Consequently, the function $\widehat{\chi}_{\{x\}}\colon (\M(L),w^*) \rightarrow \er$, given for every $\mu\in\M(L)$ by the formula 
\[\widehat{\chi}_{\{x\}}(\mu)=\int_L\chi_{\{x\}}d\mu,\]
is of the first Borel class as well, see again \cite[Lemma 3.2]{lusp}. Thus, also the function $T^{**}(\chi_{\{x\}})=\widehat{\chi}_{\{x\}} \circ T^*\colon \M(K) \rightarrow \er$ is of the first Borel class on $(\M(K),w^*)$, as a composition of a first Borel class function with a continuous mapping (recall that $T^{*}$ is weak$^*$--weak$^{*}$ continuous). Hence, the conclusion follows from Lemma \ref{lem:bidual of C(K) and Borel functions on K}.

\medskip

\noindent $\bullet$ \textit{For each $x\in L$ we have $\abs{\phi(x)}\big(\rho^{-1}(x)\big)=1$ and $\phi(x)\in S_{\M(F)}$, and hence $\rho$ is surjective.} 

First, since by the previous point for each $x\in L$ we may consider the functional $T^{**}(\chi_{\{x\}})$ as a Borel function $h_x\colon K\to\er$, the set
\[\rho^{-1}(x)=\Big\{y\in K\colon\ \abs{\left\langle T^{**}(\chi_{\{x\}}), \delta_y \right\rangle}=1\Big\}=\big\{y\in K\colon\ \abs{h_x(y)}=1\big\}\]
is a Borel subset of $K$, and so is $\phi(x)$-measurable.

Now, for the sake of contradiction, assume that there exists $x \in L$ such that 
\[\abs{\phi(x)}\big(\rho^{-1}(x)\big)=\abs{\phi(x)}\Big(\Big\{ y \in K\colon\ \abs{\left\langle T^{**}(\chi_{\{x\}}), \delta_y \right\rangle}=1\Big\}\Big)<1.\]
By the $\sigma$-additivity of the measure $\abs{\phi(x)}$ and the equality
\[\Big\{ y \in K\colon \abs{\left\langle T^{**}(\chi_{\{x\}}), \delta_y \right\rangle}=1\Big\}=\bigcap_{n \in \en} \Big\{ y \in K\colon \abs{\left\langle T^{**}(\chi_{\{x\}}), \delta_y \right\rangle}>1-1/n\Big\},\]
there exists $\ep>0$ such that, for the set 
\[M_{\ep}=\Big\{ y \in K\colon \abs{\left\langle T^{**}(\chi_{\{x\}}), \delta_y \right\rangle}>1-\ep\Big\},\]
it holds $\abs{\phi(x)}(M_{\ep})<1$, and so
\[\abs{\phi(x)}(K\setminus M_\ep)>0,\]
as $\norm{\phi(x)}=1$.

Next, having again in mind that we treat $T^{**}(\chi_{\{x\}})$ as a Borel $\er$-valued function on $K$ such that
\[\sup_{y\in K}\abs{T^{**}(\chi_{\{x\}})(y)}\le 1,\] 
as previously, we get
\[1=\left\langle T^{**}(\chi_{\{x\}}), \phi(x)\right\rangle\leq\int_K \abs{T^{**}(\chi_{\{x\}})(y)} d\abs{\phi(x)}(y)=\]
\[=\int_{M_{\ep}} \abs{T^{**}(\chi_{\{x\}})(y)} d\abs{\phi(x)}(y)+\int_{K \setminus M_{\ep}} \abs{T^{**}(\chi_{\{x\}})(y)} d\abs{\phi(x)}(y)\leq\]
\[\leq\abs{\phi(x)}(M_{\ep})+(1-\ep)\abs{\phi(x)}(K \setminus M_{\ep})=\norm{\phi(x)}-\eps\abs{\phi(x)}(K\setminus M_\ep)=\]
\[=1-\eps\abs{\phi(x)}(K\setminus M_\ep)<1,\]
which is the desired contradiction.

\medskip

\noindent $\bullet$ \textit{For every $y \in F$ there exists $\sigma(y) \in\{-1,1\}$ such that for each $f\in\C(L)$ we have $T(f)(y)=\sigma(y) f(\rho(y))$ and so $\abs{f(\rho(y))}=\abs{T(f)(y)}$.} 

Let $y\in F$. We put
\[\sigma(y)=\left\langle T^{**}\big(\chi_{\{\rho(y)\}}\big), \delta_y\right\rangle= \left\langle \chi_{\{\rho(y)\}}, T^{*}(\delta_y) \right\rangle.\]
Then, by the definition of $\rho(y)$, we have $\abs{\sigma(y)}=1$, so that $\sigma(x)\in\{-1,1\}$. As $T^*$ is an isometry, we have $\norm{T^*(\delta_y)}=1$. Also, since $\sigma(y)=\left\langle\chi_{\{\rho(y)\}}, T^*(\delta_y)\right\rangle$, we get
\[\abs{T^*(\delta_y)\big(\{\rho(y)\}\big)}=\abs{\sigma(y)}=1.\] 
Consequently, $T^*(\delta_y)=\sigma(y)\delta_{\rho(y)}$.
Thus, for each $f \in \C(L)$ we have
\[T(f)(y)=\langle\delta_y, Tf\rangle=\left\langle T^{*}(\delta_y), f\right\rangle=\left\langle \sigma(y) \delta_{\rho(y)}, f \right\rangle=\sigma(y)f(\rho(y)),\]
as desired.

\medskip

\noindent $\bullet$ \textit{$\sigma$ is continuous, and if $T$ is positive, than $\sigma(y)=1$ for each $y\in F$.} 

Let $f\in\C(L)$ be the function which is constant $1$ on $L$, i.e. $f=\chi_L$. By the above equality, for every $y \in F$ we get
\[\sigma(y)=\sigma(y)\cdot 1=\sigma(y) f(\rho(y))=T(f)(y).\]
Hence, $\sigma$ is continuous due to the continuity of the function $Tf$. Also, if $T$ is positive, then for all $y\in F$ we have $T(f)(y)\ge0$, hence $\sigma(y)=1$.

\medskip

\noindent $\bullet$ \textit{$\rho$ is continuous.} 

Assume for the sake of contradiction that there exists a net $(y_{\lambda})_{\lambda\in\Lambda}$ in $F$ converging to some point $y \in F$, but such that the points $x_{\lambda}=\rho(y_{\lambda})$, $\lambda\in\Lambda$, do not converge to $x=\rho(y)$ in $L$. Then, there exists an open neighborhood $U$ of $x$ in $L$ such that for each $\lambda_0\in\Lambda$ there exists $\lambda \geq \lambda_0$ for which $x_{\lambda} \not\in U$. Find a function $f \in \C(L, [0, 1])$ which is supported by $U$ and such that $f(x)=1$. Then, by the above, we have
\[\abs{T(f)(y)}=\abs{\sigma(y)f(\rho(y))}=\abs{\sigma(y) f(x)}=1.\]
Thus, there exists $\lambda_0\in\Lambda$ such that for each $\lambda \geq \lambda_0$ we have $\abs{T(f)(y_{\lambda})}>0$. By taking $\lambda \geq \lambda_0$ such that $x_{\lambda} \notin U$, we get
\[0=\abs{f(x_{\lambda})}=\abs{\sigma(y_\lambda)f(\rho(y_\lambda))}=\abs{T(f)(y_{\lambda})}>0,\]
a contradiction.

\medskip

\noindent $\bullet$ \textit{$F$ is closed.} 

Let $(y_{\lambda})_{\lambda\in\Lambda}$ be a net in $F$ converging to some point $y \in K$. For each $\lambda\in\Lambda$ we denote $x_{\lambda}=\rho(y_{\lambda})$. By the compactness of $L$, passing to a subnet if necessary, we may assume that the net $(x_{\lambda})_{\lambda\in\Lambda}$ converges to some point $x \in L$. For each $f \in \C(L)$ and $\lambda\in\Lambda$ we have
\[\abs{f(x_{\lambda})}=\abs{\sigma(y_\lambda)f(\rho(y_\lambda))}=\abs{T(f)(y_{\lambda})},\]
and thus $\abs{f(x)}=\abs{Tf(y)}$, by the continuity of $f$ and $Tf$.

We again find a net $(f_{\alpha})_{\alpha\in A}$ in $\C(L)$ of functions converging weak$^*$ to $\chi_{\{x\}}$ in $\C(L)^{**}$ and such that for each $\alpha\in A$ we have $f_\alpha(x)=1$. By the weak$^*$--weak$^*$ continuity of $T^{**}$ and the fact that $T^{**}(f_\alpha)=T(f_\alpha)$ for each $\alpha\in A$, we get
\[\abs{\left\langle T^{**}(\chi_{\{x\}}), \delta_y \right\rangle}=\lim_{\alpha\in A} \abs{\left\langle T^{**}(f_{\alpha}), \delta_y\right\rangle}=\lim_{\alpha\in A} \abs{T(f_{\alpha})(y)}=\lim_{\alpha\in A} \abs{f_{\alpha}(x)}=1.\]
By the definition of $F$, it follows that $y \in F$. Consequently, $F$ is closed.

\medskip

\noindent $\bullet$ \textit{For each $g\in\C(K)$ and $y\in F$ we have $P(g)(y)=\sigma(y)\left\langle \phi(\rho(y)), g\right\rangle$.} 

Let $g\in\C(K)$ and $y\in F$. We have
\[\left\langle \phi(\rho(y)), g\right\rangle=\left\langle P^*\big((T^*)^{-1}(\delta_{\rho(y)})\big),\ g \right\rangle=\left\langle\delta_{\rho(y)}, (T^{-1}P)(g)\right\rangle=\]
\[=\big((T^{-1}P)(g)\big)(\rho(y))=\sigma(y)^{-1} T\big((T^{-1}P)(g)\big)(y)=\sigma(y)^{-1}P(g)(y),\]
so the required equality holds.

\medskip

\noindent $\bullet$ \textit{If $T$ and $P$ are positive, then $\phi(x)\in\P(K)$ for all $x\in L$.}

Assume that $T$ and $P$ are positive. By above, for each $g\in\C(K)$ and $y\in F$ we have $\sigma(y)=1$ and so $P(g)(y)=\left\langle \phi(\rho(y)), g\right\rangle$. As $P$ is positive and $\rho$ is surjective, the latter equality implies that for every $x\in L$ and positive $g\in C(K)$ we have $\left\langle \phi(x), g\right\rangle\ge0$. Consequently, for all $x\in L$ and all Borel sets $B\subseteq K$ it holds $\phi(x)(B)\ge0$, which means that $\phi(x)\in\P(K)$.

\medskip

The proof of the theorem is finished.
\end{proof}

\subsection{Characterization of isometric complemented copies of spaces $\C(L)$ in spaces $\C(K)$\label{sec:holsztynski_separable}}

Without any additional assumption on the space $L$ the converse to Theorem \ref{holsztynski} does not hold, that is, the existence of a closed set $F\subseteq K$, a continuous surjection $\rho\colon F\to L$, and a continuous mapping $\phi\colon L\to S_{\M(K)}$ as in the theorem, does not imply that $\C(L)$ isometrically maps onto a complemented subspace of $\C(K)$---to see this, consider for example the compact spaces $K=[0,1]^{\omega_1}$ and $L=[1,\omega_1]$, where $L$ is even homeomorphic to a closed subspace of $K$, but $\C(L)$ cannot be even isomorphically embedded into $\C(K)$, as $K$ satisfies the countable chain condition, while $L$ does not, cf. \cite[Theorem 1.3]{rondos_cardinal_invariants}. However, we get the converse statement in the case when $L$ is metrizable, which is the content of our next result. We also obtain further equivalent statements, e.g concerning the existence of a PNPP projectional tree of norm $1$ or regarding the existence of certain families of open sets and measures (note that in the case of countable compact spaces similar families were already considered by Alspach \cite{Alspach_operators_on_separable}).

We start with the following auxiliary lemma, which describes a canonical way of constructing a copy of the space $\A_{\Lambda}$ in $\C(K)$-spaces.

\begin{lemma}
\label{lem:tree of continuous functions}
Assume that $K$ is a compact space, $\Lambda \in \TT$, $m \in \en$, and 
\[(f_{s, i})_{(s, i) \in\Sigma_m(\Lambda)} \subseteq \C(K, [0, 1])\] 
is a system of norm-$1$ functions satisfying for each $(s, i), (t, j) \in\Sigma_m(\Lambda)$ the following conditions: 
\begin{itemize}
    \item $\supp(f_{s, i}) \subseteq f_{t,j}^{-1}(1)\ \Leftrightarrow\ (i=j \text{ and } t \preceq s)$, 
    \item $\supp(f_{s, i})\cap \supp(f_{t, j})=\emptyset\ \Leftrightarrow\ (i \neq j \text{ or } s \perp t)$.
\end{itemize}
Then, the mapping $\tilde{T}$ assigning to each sequence $\tilde{\chi}_{s, i}$ the function $f_{s, i}$ extends to a unique PNPP isometric embedding $T\colon \A_{\Lambda}^m \rightarrow \C(K)$.
\end{lemma}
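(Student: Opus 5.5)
The plan is to define $T$ on the dense subspace $\mathcal{FS}(\A_\Lambda^m)$ by linearity, setting $T\big(\sum a_{s,i}\tilde{\chi}_{s,i}\big)=\sum a_{s,i} f_{s,i}$, and to prove directly that this map is PNPP. Lemma \ref{lem:ordered normed spaces and PNPP}(i) then makes it an isometry on $\mathcal{FS}(\A_\Lambda^m)$, because $\norm{\cdot}^\Lambda$ satisfies the max-formula by Lemma \ref{lem:norm of the positive part}(2) and $\C(K)$ is an AM-space. By Lemma \ref{lem:fin-approx}, $\mathcal{FS}(\A_\Lambda^m)$ is dense, so $T$ extends uniquely to an isometry on $\A_\Lambda^m$. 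The PNPP property passes to the extension because $e\mapsto e^+$ is norm-continuous in both normed lattices; this is the same limit argument as in the proof of Theorem \ref{A_Lambda is C(2^omega)}. Uniqueness of the extension also follows from density.

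By the second support condition, the supports belonging to different indices $i\neq j$ are disjoint. Hence it suffices to treat a single copy, i.e. $m=1$, and to fix an element supported by a trunk $F$ together with $g=\sum_{s\in F}a_s f_s$. The goal is to show $\norm{g^+}=\sup_{t\in F}\big(\sum_{s\preceq t}a_s\big)^+$, which equals $\norm{\olb{a}^+}^\Lambda$ by Lemma \ref{lem:norm of the positive part}(2).

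For the inequality $\le$, fix $y\in K$ and let $C=\{s\in F\colon f_s(y)>0\}$.
\begin{itemize}
\item By the second condition, any two incomparable elements have disjoint supports, so $C$ is a chain.
\item If $s\in C$ and $t\prec s$, then the first condition gives $\supp(f_s)\subseteq f_t^{-1}(1)$, hence $f_t(y)=1$. So $C$ is an initial segment $\emptyseq=s_0\prec\dots\prec s_k$ of a branch, and $f_{s_j}(y)=1$ for all $j<k$.
\end{itemize}
Therefore $g(y)=\sum_{j<k}a_{s_j}+a_{s_k}f_{s_k}(y)$. This is a convex combination of the partial sum up to $pred(s_k)$ (taken as $0$ if $k=0$) and the partial sum up to $s_k$. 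If $C=\emptyset$, then $g(y)=0$. In every case $g(y)^+$ is bounded by the supremum above.

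For the inequality $\ge$, I need a point that realises each partial sum. Fix $t\in F$.
\begin{itemize}
\item If $t$ is a leaf of $\Lambda$, pick $y$ with $f_t(y)=1$, which exists since $f_t$ is a norm-$1$ function into $[0,1]$.
\item If $t$ is not a leaf, then $t\concat n\in\Lambda$ for every $n\in\omega$, so I choose $n$ with $t\concat n\notin F$ and pick $y$ with $f_{t\concat n}(y)=1$.
\end{itemize}
Take any $u\in F$ with $u\not\preceq t$. Either $u\perp t\concat n$, in which case the supports of $f_u$ and $f_{t\concat n}$ are disjoint and $f_u(y)=0$. Or $t\concat n\prec u$, which is impossible since $F$ is a trunk and $t\concat n\notin F$. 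On the other hand, $f_s(y)=1$ for all $s\preceq t$. Hence $g(y)=\sum_{s\preceq t}a_s$.

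I expect this choice of the point $y$ to be the main obstacle. One has to find a point lying in $f_t^{-1}(1)$ but off the supports of the finitely many children of $t$ that belong to $F$. The resolution is to use an unused child $t\concat n$, which is where both conditions on the system $(f_{s,i})$ are really used.
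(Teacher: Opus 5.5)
Your proof is correct. Its outer frame is the same as the paper's: define $T$ on the span of the $\tilde{\chi}_{s,i}$, prove PNPP on trunk-supported elements, get an isometry from the max-formula, extend by density, and pass PNPP to the limit. The core identity $\norm{g^+}=\sup_{t\in F}\big(\sum_{s\preceq t}a_s\big)^+$, however, is proved by a different route.

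The paper only says ``as in the proof of Lemma~\ref{fin-norm}'', i.e.\ by induction on the length of the trunk. There one splits $g=a_{\emptyseq}f_{\emptyseq}+\sum_k g_k$ with $g_k=\sum_{s\in F_{\langle k\rangle}}a_sf_s$, uses that the $g_k$ are disjointly supported inside $f_{\emptyseq}^{-1}(1)$, applies the inductive hypothesis to the shifted trunks $F^{\langle k\rangle}$, and reassembles with Lemma~\ref{lem:sup_max_0}. Your argument is a direct, non-inductive pointwise computation. At each point the active nodes form an initial segment of a branch, so $g(y)$ is a convex combination of two consecutive partial sums. An unused child $t\concat n\notin F$ (or $t$ itself, if it is a leaf) then gives a point realising each partial sum.

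What your route buys:
\begin{itemize}
\item It needs no induction and no external positive-part formula.
\item Intermediate values $0<f_s(y)<1$ are handled explicitly by convexity of $x\mapsto x^+$. The template of Lemma~\ref{fin-norm} was written for indicator functions of clopen sets. Its recursive transplant would also need your ``unused child'' point, to guarantee that $g$ actually attains the value $a_{\emptyseq}$.
\item It is transparent that you use only the ``$\Leftarrow$'' halves of the two hypotheses, with $t\prec s$ strict in the first.
\end{itemize}
The last point matters. Read literally with $t=s$, the first hypothesis forces $\supp(f_{s,i})\subseteq f_{s,i}^{-1}(1)$, i.e.\ each $f_{s,i}$ is the indicator of a clopen set. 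The functions fed into the lemma in the proof of (3)$\Rightarrow$(4) of Theorem~\ref{characterization-almost isometric} are Urysohn functions equal to $1$ on $\overline{V_{s,i}}$ and supported in $U_{s,i}$. In general they satisfy only the weaker hypotheses your proof uses.

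The paper's route, in turn, keeps the argument parallel to the root-plus-subtrees recursion used throughout Section~\ref{sec:new_descriptions}.
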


\begin{proof}
As previously, it is enough to consider the case $m=1$. First, the mapping $\tilde{T}$ extends uniquely to an operator $T'$ mapping the space $\mathcal{FS}(\A_\Lambda)$ into $\C(K)$; more precisely, for each $(a_s)_{s\in\Lambda}\in\mathcal{FS}(\A_\Lambda)$ we set
\[T'\big((a_s)_{s\in\Lambda}\big)=\sum_{s\in\Lambda}a_s f_s,\]
so that $T'$ is clearly linear. As in the proof of Lemma \ref{fin-norm}, one may check that $T'$ is actually a PNPP isometry from $\mathcal{FS}(\A_\Lambda)$ into $\C(K)$ such that for each element $\olb{a}=(a_s)_{s \in \Lambda}\in \A_\Lambda$ supported by a trunk $F \in \mathfrak{Tr}(\Lambda)$ it holds 
\[\tag{$*$}\norm{T'(\olb{a})^+}=\norm{\bigg( \sum_{s \in F} a_{s} f_{s} \bigg)^{+}} =\norm{\olb{a}^+}^\Lambda,\]
and so
\[\norm{T'(\olb{a})}=\norm{\sum_{s \in F} a_{s} f_{s}} =\norm{\olb{a}}^\Lambda.\]
Then, since the space $\mathcal{FS}(\A_\Lambda)$ is norm-dense in $\A_\Lambda$ (Lemma \ref{lem:fin-approx}), $T'$ extends in a unique way to an isometric embedding $T\colon \A_\Lambda\to\C(K)$ (cf. Remark \ref{rem:tree sufficient 1}). Moreover, as in the proof of Theorem \ref{A_Lambda is C(2^omega)}, using ($*$) and the continuity of the lattice operations and of $T$, we show that for every $\olb{a}\in\A_\Lambda$ it holds $\norm{\olb{a}^+}^\Lambda=\norm{T(\olb{a})^+}$, that is, that $T$ is a PNPP operator.
\end{proof}

Now, we are ready to prove our last main result. 

\begin{thm}
\label{characterization-almost isometric}
Let $(K,\tau)$ be a compact space and $L$ be a metric compact space. Set $\alpha=ht(L)$ and let $m=\abs{L^{(\alpha-1)}}$ if $\alpha<\omega_1$, and $m=1$ otherwise. Then, the following assertions are equivalent:
\begin{enumerate}[(1),itemsep=1mm]
    \item There exist a closed linear subspace $E$ of $\C(K)$, a surjective positive isometry $T\colon\C(L) \rightarrow E$, and a positive norm-$1$ projection $P\colon\C(K) \rightarrow E$.
    \item There exist a closed subset $F$ of $K$, a continuous surjection $\rho\colon F \rightarrow L$, and a continuous mapping $\phi\colon L \rightarrow \P(F)$ such that $\supp(\phi(x))\subseteq\rho^{-1}(x)$ for each $x\in L$.
    \item There exist a tree $\Lambda\subseteq\el$ of rank $\alpha$ and a system
 \[\Big(\big(U_{s, i}, V_{s, i}, \mu_{s, i}\big)_{s \in \Lambda}\colon\ 1\le i\le m\Big) \subseteq \big(\tau\times\tau\times \P(K)\big)^m\]
    such that the following conditions are satisfied:
    \begin{enumerate}[(a)]
        \item $\overline{U_{\emptyseq,i}}\cap\overline{U_{\emptyseq,j}}=\emptyset$ for every $1\le i\neq j\le m$,
        \item for each $s\neq t\in\Lambda$ and $1\le i\le m$ we have:
        \begin{itemize}
        \item $\overline{V_{s, i}}  \subseteq U_{s, i}$,
	\item if $s\in NL(\Lambda)$, then $\overline{\bigcup_{k\in\omega} U_{s \concat k, i}}\subseteq V_{s, i}$,
        \item if $s\in NL(\Lambda)$, then $\overline{U_{s \concat n, i}} \cap \overline{\bigcup_{k>n} U_{s \concat k, i}}=\emptyset$ for each $n\in\omega$,
        \item $s \prec t\ \Leftrightarrow\ \overline{U_{t, i}} \subseteq V_{s, i}$, 
        \item $\supp(\mu_{s, i}) \subseteq V_{s, i}$,
        \item $s\prec t\ \Rightarrow\ \supp(\mu_{s, i}) \cap \overline{U_{t, i}}=\emptyset$;
        \end{itemize}
        \item the mapping $\Sigma_m(\Lambda)\ni(s, i) \mapsto \mu_{s, i}$ uniformly weak$^*$ continuous.
    \end{enumerate}
    \item $\C(K)$ admits a PNPP projectional tree of rank $\alpha$, order $m$, and norm $1$.
    \item There exist a closed linear subspace $E$ of $\C(K)$, a surjective PNPP isometry $T\colon\C(L) \rightarrow E$, and a positive norm-$1$ projection $P\colon\C(K) \rightarrow E$.
\end{enumerate}
\end{thm}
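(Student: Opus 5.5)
I would prove the cycle (1)$\Rightarrow$(2)$\Rightarrow$(3)$\Rightarrow$(4)$\Rightarrow$(5)$\Rightarrow$(1). The last implication is trivial: a PNPP operator is positive.

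\textbf{(1)$\Rightarrow$(2).} This is Theorem \ref{holsztynski} applied to $T$ and $P$. Item (a) gives $\sigma\equiv1$. Item (b) gives $\phi(x)\in\P(F)$, and (i) gives the support condition. Continuity of $\phi$ into $(\P(F),w^*)$ is part of the theorem.

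\textbf{(2)$\Rightarrow$(3).} Using Mazurkiewicz--Sierpi\'nski (resp.\ $L$ uncountable), I fix a tree $\Lambda$ of rank $\alpha$ and the homeomorphic embedding $\Sigma_m(\Lambda)\to L$ of Theorems \ref{A_lambda is C(omega^alpha)} and \ref{A_Lambda is C(2^omega)}. For $\Lambda=\el$ I compose $R$ with a continuous surjection $2^\omega\to L$ (Alexandroff--Hausdorff), still denoted $(s,i)\mapsto x_{s,i}$. In $L$ I build, recursively along the tree, open sets $U'_{s,i}\supseteq\overline{V'_{s,i}}$ around $x_{s,i}$ with the nesting and separation pattern of (b). In the scattered case I use the clopen images of $\Lambda_s$, shrunk as needed. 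In the Cantor case I use the clopen sets $[Q(s)]$ transported to $L$; here I should arrange that $\chi_{[Q(s)]}$ corresponds to sets whose images have the required nesting, possibly passing through a copy of $2^\omega$ rather than $L$. I then pull back: $U_{s,i}$ is an open set of $K$ whose trace on $F$ is $\rho^{-1}(U'_{s,i})$, chosen via normality of $K$ so that closures stay separated, with $V_{s,i}$ defined similarly. I set $\mu_{s,i}=\phi(x_{s,i})$. Then $\supp\mu_{s,i}\subseteq\rho^{-1}(x_{s,i})\subseteq V_{s,i}$, and for $s\prec t$ the point $x_{s,i}\notin\overline{U'_{t,i}}$, which gives disjointness. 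Uniform weak$^*$ continuity follows from the continuity of $\phi$ and Remark \ref{rem:uni_weakstar_cont}. For the Cantor case I restrict the continuous map on $2^\omega$ to $R[\el]$, exactly as in the proof of Proposition \ref{prop: complementability of tree spaces}.

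\textbf{(3)$\Rightarrow$(4).} By Urysohn I choose $f_{s,i}\in\C(K,[0,1])$ with $f_{s,i}=1$ on $\overline{V_{s,i}}$ and $\supp f_{s,i}\subseteq U_{s,i}$. The conditions in (b) yield exactly the support hypotheses of Lemma \ref{lem:tree of continuous functions}:
\begin{itemize}
\item comparable elements $t\preceq s$ give $\supp f_{s,i}\subseteq U_{s,i}\subseteq V_{t,i}\subseteq f_{t,i}^{-1}(1)$;
\item incomparable elements, and distinct indices $i\neq j$ (via (a)), give disjoint supports, using the separation of siblings.
\end{itemize}
The lemma then gives a PNPP isometric embedding $T$. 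Put $\rho(s,i)=\mu_{s,i}$, which takes values in $B_{\M(K)}$ and is positive. By Remark \ref{rem:tree sufficient 2} it remains to verify $\langle\mu_{s,i},f_{t,j}\rangle=\tilde\delta_{s,i}(\tilde\chi_{t,j})$. If $t\preceq s$ and $i=j$, then $\supp\mu_{s,i}\subseteq V_{s,i}\subseteq f_{t,i}^{-1}(1)$, so the value is $1$. If $s\prec t$, the support of $\mu_{s,i}$ misses $\overline{U_{t,i}}$. If $s\perp t$ or $i\neq j$, then $V_{s,i}\cap U_{t,j}=\emptyset$. In all three cases the values agree, so we obtain a tree of norm $1$.

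\textbf{(4)$\Rightarrow$(5).} Theorem \ref{main characterization-zerodim}(a),(b) gives a positive norm-$1$ projection onto $T[\A_\Lambda^m]$. I identify $\A_\Lambda^m$ with $\C(L)$ by the lattice isometries of Section \ref{sec:new_descriptions}; for $L$ uncountable I use $\C(2^\omega)$ and Miljutin's theorem. A lattice isometry composed with a PNPP map is PNPP, which yields (5). The uncountable case needs care: Miljutin gives only an isomorphism. So I would instead use the Folklore Fact (2), that $\C(L)$ is positively $1$-complemented and embedded in $\C(2^\omega)$ via an averaging operator, together with a positive isometric embedding that I check is PNPP. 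I expect this step, and the Cantor-case construction in (2)$\Rightarrow$(3), to be the main obstacles.
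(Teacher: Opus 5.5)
Your cycle and your arguments for (1)$\Rightarrow$(2), (3)$\Rightarrow$(4) and (5)$\Rightarrow$(1) match the paper. Your final plan for (4)$\Rightarrow$(5) is also the paper's route: Miljutin's lemma gives a continuous surjection $\pi\colon 2^\omega\to L$ with a regular averaging operator, so $f\mapsto f\circ\pi$ is a lattice isometric embedding onto a positively $1$-complemented sublattice. There you should add that the resulting projection $T\circ\tilde P\circ T^{-1}\circ P$ is positive only because $T^{-1}\colon E\to\C(2^\omega)$ is positive. This is Lemma \ref{lem:pnpp_inverse_positive} applied to the PNPP map $T$.

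The genuine gap is the uncountable case of (2)$\Rightarrow$(3). You sensed it (``possibly passing through a copy of $2^\omega$''), but the map you commit to cannot work. Composing $R$ with a continuous \emph{surjection} $q\colon 2^\omega\to L$ destroys exactly the pattern you need. The sets $q\big[[Q(s)]\big]$ are only closed, the images of siblings may touch, and the points $x_s=q(R(s))$ may collide or accumulate on one another. For instance, take the binary-expansion map onto $[0,1]$:
\begin{itemize}
\item $q(R(s\concat n))$ is the left endpoint of $q\big[[Q(s\concat(n+1))]\big]$, and it is the limit of $q(R(u_k))$ for $u_k=s\concat(n+1)\concat 0^k$;
\item by continuity of $\phi$, $\mu_{u_k}\to\mu_{s\concat n}$ weak$^*$;
\item since $\supp\mu_{u_k}\subseteq U_{s\concat(n+1)}$, you get $\emptyset\neq\supp\mu_{s\concat n}\subseteq\overline{U_{s\concat(n+1)}}\cap\overline{U_{s\concat n}}$, contradicting the sibling separation in (b).
\end{itemize}
What is needed here is an \emph{embedding} of $2^\omega$ into $L$, not a surjection onto $L$. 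An uncountable compact metric $L$ contains a closed $M$ homeomorphic to $2^\omega$. Replacing $F$ by the closed set $\rho^{-1}[M]$ and restricting $\rho$ and $\phi$ preserves everything in (2), since $\supp\phi(x)\subseteq\rho^{-1}(x)$. Meanwhile (3) only asks for $\Lambda=\el$ and $m=1$. This reduction is the paper's first step. After it, with $h\colon 2^\omega\to M$ a homeomorphism, the points $h(R(s))$ are distinct. The sets $h\big[[Q(s)]\big]$ are clopen in $M$ with exactly the nesting and sibling pattern of (b), so your pull-back scheme works as in the countable case.

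Second, the pull-back to $K$ is not just normality applied set by set. Condition (b) involves closures of the infinite unions $\bigcup_k U_{s\concat k}$, and choosing each $U_{s\concat k}$ separately does not control these. You need the recursive tail separation used in the paper. Starting inside $V_s$, separate $\rho^{-1}$ of the $n$-th child's clopen set from $\rho^{-1}$ of the union of the later children's clopen sets together with the fibre over $x_s$. Do this by open sets with disjoint closures, each inside the previous tail set. With that in place, your variant has a small advantage: $V_s$ is a neighbourhood of the preimage of the whole clopen set attached to $s$, not only of the fibre $\rho^{-1}(x_s)$. This avoids the paper's re-indexing $s\mapsto t_s$ with the shift $n_0$ and its appeal to upper semicontinuity of $\rho^{-1}$.
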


\begin{proof}
Implication (1)$\Rightarrow$(2) immediately follows from Theorem \ref{holsztynski} and implication (5)$\Rightarrow$(1) is trivial.
\medskip
 
In the proof of implication (2)$\Rightarrow$(3), we distinguish two cases. First, we assume that $L$ is uncountable, so $L$ contains a closed subspace $M$ homeomorphic to $2^{\omega}$. Thus, the set $\rho^{-1}[M] \subset K$ is closed, and hence it follows that without loss of generality we may assume that $L=2^{\omega}$. Let $\Lambda$ be the full tree, i.e. $\Lambda=\el=\omega^{<\omega}$, and $m=1$. We construct the desired system $(U_{s,1}, V_{s,1}, \mu_{s,1})_{s \in \Lambda}=(U_s, V_s, \mu_s)_{s \in \Lambda}$ recursively in such a way that for each $s \in \Lambda$ there is  $t_s\in\el$ with the following properties:
\begin{enumerate}[(i)]
    \item $\mu_s=\phi({R(t_s)})$,
    \item $\rho^{-1}(R(t_s))\subseteq V_s$,
    \item $l(s)=l(t_s)$,
    \item $t_s\prec t_{s\concat n}$ and $n\le t_{s\concat n}(l(s))<t_{s\concat(n+1)}(l(s))$ for any $n\in\omega$.
\end{enumerate}

Let $U_{\emptyseq}=V_{\emptyseq}=K$, and $\mu_{\emptyseq}=\phi({R(\emptyseq)})$, i.e. $t_\emptyseq=\emptyseq$. Now, let $N\in\omega$ and assume that the system
\[\big\{(U_s, V_s, \mu_s)\colon\ s\in\Lambda,\ l(s)\le N\big\}\]
have been constructed in such a way that it satisfies condition (b) as well as that for all $s\in\Lambda$ the objects $(U_{s}, V_{s},\mu_{s})$ and $t_s$ satisfy conditions (i)--(iv) if $l(s)<N$, and conditions (i)--(iii) if $l(s)=N$. 

Let $s\in\Lambda$ be of length $N$. It follows that $\mu_s=\phi({R(t_s)})$ for some $t_s \in \omega^{<\omega}$. Since $\rho^{-1}$ is an upper-semicontinuous set-valued mapping between the spaces of all closed nonempty subsets of $L$ and $F$, both endowed with the Vietoris topologies, and $V_s$ is an open set containing $\rho^{-1}(R(t_s))$ (by (ii)), there exists $n_0 \in \omega$ such that for each $n \ge n_0$ we have $\rho^{-1}(R(t_s\concat n)) \subseteq V_s$ (note that $\lim_{n\to\infty}R(t_s\concat n)=R(t_s)$). For each $n\in\omega$ we define
\[\mu_{s\concat n}=\phi\Big({R\big(t_s\concat (n+n_0)\big)\Big)}\]
i.e. we set $t_{s\concat n}=t_s\concat (n+n_0)$; we have $\supp(\mu_{s\concat n})\subseteq V_s$. So, conditions (i), (iii), and (iv) are satisfied by $s\concat n$ and $t_{s\concat n}$ for each $n\in\omega$.

We need to show that condition (ii) is also satisfied. We construct the open sets $U_{s\concat n}$'s by induction on $n\in\omega$. Firstly, since the sets
\[\big\{R(t_s\concat n_0)\big\}\quad\text{and}\quad\big\{R(t_s\concat n)\colon n >n_0\big\} \cup \{R(t_s)\}\]
are pairwise disjoint closed subsets of $L$, the sets
\[\rho^{-1}\big(R(t_s\concat n_0)\big)\quad\text{and}\quad\rho^{-1}\Big[\big\{R(t_s\concat n)\colon n>n_0\big\} \cup \{R(t_s)\}\Big]\]
are pairwise disjoint closed subsets of $K$ (recall that the set $F$ is closed), and hence there exist disjoint open sets $U_{s\concat 0}$ and $\tilde{U}_{s\concat 0}$ in $K$ which respectively separate the latter preimages. By making the open sets smaller if necessary, we may assume that
\[\overline{U_{s\concat 0}},\overline{\tilde{U}_{s\concat 0}}\subseteq V_s\quad\text{and}\quad\overline{U_{s\concat 0}}\cap\overline{\tilde{U}_{s\concat 0}}=\emptyset.\]

Assume now that $k > 0$ and the sets $U_{s\concat 0},\ldots,U_{s\concat(k-1)}$ and $\tilde{U}_{s\concat 0},\ldots,\tilde{U}_{s\concat(k-1)}$ have been defined in such a way that for every $0\le l<k$ we have:
\begin{itemize}
    \item $\overline{U_{s\concat l}}\cap\overline{\tilde{U}_{s\concat l}}=\emptyset$,
    \item $\overline{U_{s\concat l}},\overline{\tilde{U}_{s\concat l}}\subseteq\tilde{U}_{s\concat(l-1)}$ if $l>0$,
    \item $\rho^{-1}\Big(R\big(t_s\concat(n_0+l)\big)\Big)\subseteq U_{s\concat l}$,
    \item $\rho^{-1}\Big[\big\{R(t_s\concat n)\colon n> n_0+l\big\} \cup \{R(t_s)\}\Big]\subseteq\tilde{U}_{s\concat(l-1)}$.
\end{itemize}
Then, since the sets
\[\Big\{R\big(t_s\concat(n_0+k)\big)\Big\}\quad\text{and}\quad\big\{R(t_s\concat n)\colon n >n_0+k\big\} \cup \{R(t_s)\}\]
are pairwise disjoint closed subsets of $L$, the sets
\[\rho^{-1}\Big(R\big(t_s\concat(n_0+k)\big)\Big)\quad\text{and}\quad\rho^{-1}\Big[\big\{R(t_s\concat n)\colon n>n_0+k\big\} \cup \{R(t_s)\}\Big]\]
are pairwise disjoint closed subsets of $K$ contained in $\tilde{U}_{s\concat(k-1)}$, thus there exist disjoint open sets $U_{s\concat k}$ and $\tilde{U}_{s\concat k}$ in $K$ which respectively separate the latter preimages. By making the open sets smaller if necessary, we may assume that
\[\overline{U_{s\concat k}},\overline{\tilde{U}_{s\concat k}}\subseteq\tilde{U}_{s\concat (k-1)}\quad\text{and}\quad\overline{U_{s\concat k}}\cap\overline{\tilde{U}_{s\concat k}}=\emptyset.\] 
This finishes the inductive construction of the sets $U_{s\concat n}$'s.  

Finally, for each $n \in \omega$ we find an open set $V_{s\concat n}$ in $K$ such that
\[\supp(\mu_{s\concat n})\subseteq\rho^{-1}\big(R(t_{s\concat n})\big)\subseteq V_{s\concat n}\subseteq\overline{V_{s\concat n}} \subseteq U_{s\concat n},\]
so that condition (ii) is satisfied as well.

Directly by its construction, the system $(U_s,V_s,\mu_s)_{s\in\Lambda}$ satisfies conditions (a) and (b). We need to show that it also satisfies condition (c). First, notice that the mapping $\Lambda\ni s\mapsto t_s\in\el$ is uniformly continuous by conditions (iii) and (iv) (recall that the function $R$ was used to define a metrizable topology on $\el$, making $\el$ and $R[\el]$ homeomorphic, and appeal to Lemma \ref{lemma:tree_topology}). Further, the mapping $\phi \colon 2^{\omega} \rightarrow \P(K)$ is continuous, and hence it is uniformly weak$^*$ continuous, as $2^{\omega}$ is compact. Consequently, since for each $s \in \Lambda$ we have $\mu_s=\phi({R(t_s)})$, the mapping $\Lambda\ni s\mapsto\mu_s\in \P(K)$ is uniformly weak$^*$ continuous, too. 

\medskip

Let us now suppose that $L$ is countable, so that $ht(L)=\alpha<\omega_1$ and $m=\abs{L^{(\alpha-1)}}$. We may assume that $L$ is the disjoint union of $m$ copies $L_1,\ldots,L_m$ of the interval $[1,\omega^{\alpha-1}]$. Fix a tree $\Lambda\in\TT_\alpha$ and set $\Lambda_1=\ldots=\Lambda_m=\Lambda$. Fix also a partition $W_1,\ldots,W_m$ of the space $2^\omega$ into $m$ nonempty clopen subsets, and for each $1\le i\le m$ set $R_i=h_i\circ(R\rstr\Lambda_i)$, where $h_i\colon2^\omega\to W_i$ is a homeomorphism. Then, the disjoint union of the mappings $R_1,\ldots,R_m$ may be treated as an embedding of the disjoint union of the trees $\Lambda_1,\ldots,\Lambda_m$ into $2^\omega$. As each $\Lambda_i$ is homeomorphic to $L_i$, it follows that without loss of generality we may also assume that $L$ is in fact a subset of $2^\omega$ such that $L_i\subseteq W_i$ for each $i=1,\ldots,m$, and that $F$ is the disjoint union of the closed sets $F_i=\rho^{-1}[L_i]$, $i=1,\ldots,m$. Then, similarly as in the uncountable case, we construct the desired system $\Big(\big(U_{s,i}, V_{s,i}, \mu_{s,i}\big)_{s \in \Lambda}\colon\ 1\le i\le m\Big)$ recursively in such a way that for each $1\le i\neq j\le m$ we have
\[\overline{U_{\emptyseq,i}}\cap\overline{U_{\emptyseq,j}}=\emptyset\quad\text{and}\quad F_i\subseteq V_{\emptyseq,i}\subseteq\overline{V_{\emptyseq,i}}\subseteq U_{\emptyseq,i},\]
and for each $1\le i\le m$ and each $s \in \Lambda_i$ there is $t_{s,i}\in\Lambda_i$ for which the following conditions hold:
\begin{enumerate}[(i')]
    \item $\mu_{s,i}=\phi\big({R_i(t_{s,i})}\big)$,
    \item $\rho^{-1}\big(R_i(t_{s,i})\big)\subseteq V_{s,i}$,
    \item $l(s)=l(t_{s,i})$,
    \item $t_{s,i}\prec t_{s\concat n,i}$ and $n\le t_{s\concat n,i}(l(s))< t_{s\concat(n+1),i}(l(s))$ for any $n\in\omega$.
\end{enumerate}
The argument that the constructed system satisfies conditions (a)--(c) is then similar to the one from the uncountable case.

\medskip

We now prove implication (3)$\Rightarrow$(4). To this end, for each $1\le i\le m$ and $s \in \Lambda$, we simply find a function $f_{s,i} \in \C(K, [0, 1])$ such that $f_{s, i}(x)=1$ for all $x\in\overline{V_{s,i}}$, and $\supp(f_{s, i})\subseteq U_{s, i}$. Then, we claim that the triple $\T=\big(\Sigma, T, \rho)$, where $\Sigma$ is the disjoint union of $m$ copies of $\Lambda$, $T\colon \A_\Lambda^m\to\C(K)$ is a unique operator mapping each $\tilde{\chi}_{s,i}$ to $f_{s, i}$, and $\rho\colon\Sigma\to\P(K)$ is such that $\rho(s, i)=\mu_{s, i}$ for each $(s,i)\in\Sigma$, is a PNPP projectional tree of rank $\alpha$, order $m$, and norm $1$.

First, notice that the system $(f_{s,i})_{(s,i)\in\Sigma}$ of functions satisfies the assumptions of Lemma \ref{lem:tree of continuous functions}, and so $T$ is a PNPP isometric embedding. The mapping $\rho$ is immediately uniformly weak* continuous by (c). Since for each $(s,i)\in\Sigma$ we also have $\norm{\rho(s,i)}=1$, the tree $\T$ has norm $1$. 

Lastly, we need to show that $T^*\circ\rho=\tilde{\delta}$. Pick arbitrary $s, t \in \Lambda$. For $1\le i\neq j\le m$ we have $\supp(\mu_{s,i})\cap\supp(f_{t,j})=\emptyset$ (by (a)), hence $\langle\mu_{s,i},f_{t,j}\rangle=0=\tilde{\delta}_{s,i}(\tilde{\chi}_{t,j})$. So, fix $1\le i\le m$. If $t \preceq s$, then, since $\supp(\mu_{s,i}) \subseteq V_{s,i}$ and $f_{t,i}$ is constant $1$ on $V_{t,i}$ and hence on $V_{s,i}$, we get $\langle \mu_{s,i}, f_{t,i} \rangle=1=\tilde{\delta}_{s,i}(\tilde{\chi}_{t,i})$, while otherwise, we have $\supp(f_{t,i})\subseteq U_{t,i}$ and $\supp(\mu_{s,i})\cap \overline{U_{t,i}}=\emptyset$, thus $\langle \mu_{s,i}, f_{t,i} \rangle=0=\tilde{\delta}_{s,i}(\tilde{\chi}_{t,i})$. By Remark \ref{rem:tree sufficient 2}, the proof of (3)$\Rightarrow$(4) is complete. 

\medskip

Finally, we establish implication (4)$\Rightarrow$(5). In the case when $L$ is countable or $L=2^{\omega}$, this follows immediately by Theorem \ref{main characterization-zerodim} and Lemma \ref{lem:ordered normed spaces and PNPP}. In the case when $L$ is a general uncountable metric space, so that we have $\alpha=ht(L)=\infty$, again by Theorem \ref{main characterization-zerodim} and Lemma \ref{lem:ordered normed spaces and PNPP}, we obtain a closed linear subspace $E$ of $\C(K)$, a surjective positive norm-$1$ projection $P\colon\C(K)\to E$, and a surjective PNPP isometry $T \colon \C(2^{\omega}) \rightarrow E$. Further, by the result known as Miljutin's lemma (\cite[Theorem 2.4]{RosenthalC(K)}), there exists a continuous surjection of $2^{\omega}$ onto $L$ which admits a regular averaging operator (see the comment following \cite[Definition 2.10]{RosenthalC(K)}), that is, there are a closed linear sublattice $F$ of $\C(2^{\omega})$, a surjective positive norm-$1$ projection $\tilde{P}\colon\C(2^{\omega})\to F$, and a surjective lattice isometry $S \colon \C(L) \rightarrow F$. Since $T$ is PNPP and surjective, the inverse $T^{-1}\colon E\to\C(2^\omega)$ is too a positive isometry (by Lemmas \ref{lem:pnpp_inverse_positive} and \ref{lem:ordered normed spaces and PNPP}). As $S$ is also PNPP, it follows that $T \circ S \colon \C(L) \rightarrow T[F]$ is a PNPP isometry onto the subspace $T[F]$ of $\C(K)$, which is complemented by a surjective positive norm-$1$ projection $T \circ \tilde{P} \circ T^{-1} \circ P$. The proof of the theorem is thus finished.
\end{proof}


Note that the property of the space $K$ described in condition (2) in Theorem \ref{characterization-almost isometric} is weaker than containing a homeomorphic copy of the space $L$, hence the theorem generalizes Folklore Fact from Introduction. Yet another generalization is provided in the below corollary; note that its proof does not use the notions of Dugundji or Miljutin spaces.

\begin{cor}\label{cor:generalized_folklore_fact}
Assume that $K_1, K_2$ are compact spaces such that $K_1$ is homeomorphic to a subset of $K_2$. For any metric compact space $L$, if $\C(K_1)$ contains a positively $1$-complemented subspace which is positively isometric to $\C(L)$, then so $\C(K_2)$ contains such a subspace, too.      
\end{cor}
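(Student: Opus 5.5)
The plan is to pass through condition (2) of Theorem \ref{characterization-almost isometric}, which is purely topological–measure-theoretic and transfers easily from a subspace to a superspace. Let $\iota\colon K_1\to K_2$ be a homeomorphic embedding; its image $\iota[K_1]$ is compact, hence closed in $K_2$. Assume that $\C(K_1)$ contains a positively $1$-complemented subspace which is positively isometric to $\C(L)$, i.e.\ condition (1) of Theorem \ref{characterization-almost isometric} holds for $K=K_1$. By implication (1)$\Rightarrow$(2) of that theorem (which rests on Theorem \ref{holsztynski}), we obtain a closed set $F_1\subseteq K_1$, a continuous surjection $\rho_1\colon F_1\to L$, and a continuous map $\phi_1\colon L\to\P(F_1)$ with $\supp(\phi_1(x))\subseteq\rho_1^{-1}(x)$ for every $x\in L$.

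Next I transport these objects along $\iota$. I set $F_2=\iota[F_1]$; it is compact, hence closed in $K_2$. I define $\rho_2=\rho_1\circ(\iota\rstr F_1)^{-1}\colon F_2\to L$, which is a continuous surjection since $\iota\rstr F_1$ is a homeomorphism onto $F_2$. I also define $\phi_2(x)=\iota_*(\phi_1(x))$, the image measure, so that $\langle\phi_2(x),g\rangle=\langle\phi_1(x),g\circ\iota\rangle$ for $g\in\C(F_2)$ (or $g\in\C(K_2)$). Then $\phi_2(x)$ is a Radon probability measure on $F_2$: pushforwards of Radon measures under continuous maps between compact spaces are Radon, or alternatively one can argue via Riesz representation of the positive normalized functional $g\mapsto\langle\phi_1(x),g\circ\iota\rangle$. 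Weak$^*$ continuity of $\phi_2$ follows from that of $\phi_1$ and the displayed formula, because $g\circ\iota\in\C(F_1)$. For the support condition, the closed set $\iota[\rho_1^{-1}(x)]=\rho_2^{-1}(x)$ has full $\phi_2(x)$-measure, since its $\iota$-preimage in $F_1$ is $\rho_1^{-1}(x)$, which has full $\phi_1(x)$-measure. Hence $\supp(\phi_2(x))\subseteq\rho_2^{-1}(x)$.

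Thus condition (2) of Theorem \ref{characterization-almost isometric} holds for $K=K_2$. Implication (2)$\Rightarrow$(1) of that theorem then yields the desired positively $1$-complemented, positively isometric copy of $\C(L)$ in $\C(K_2)$. This direction of the theorem is proved through (3), (4) and (5) via projectional trees, so it does not use Dugundji or Miljutin spaces in the countable and Cantor cases. The only slightly delicate point is verifying that the image measure is Radon with the correct support, and that is routine. So no real obstacle is expected; the corollary is essentially the observation that condition (2) is monotone under taking superspaces.
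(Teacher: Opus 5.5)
Your proof is correct, but it transfers the structure at a different point than the paper does.

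The paper applies (1)$\Rightarrow$(3) of Theorem~\ref{characterization-almost isometric} in $K_1$; internally this passes through (2) and the recursive construction (2)$\Rightarrow$(3). It then moves the resulting system $(U_{s,i},V_{s,i},\mu_{s,i})$ onto the closed copy $F$ of $K_1$ inside $K_2$. Because conditions (a)--(c) refer to closures and open sets of the ambient space, it must then use normality of $K_2$ to rebuild, by induction on the length of nodes, open sets $\tilde U_{s,i},\tilde V_{s,i}$ in $K_2$ for which (a)--(c) still hold. This enlargement step is only sketched in the paper.

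You instead transfer condition (2). That condition involves $K$ only through the closed set $F$, so the transfer is a pure relabelling along the homeomorphism $\iota\rstr F_1\colon F_1\to F_2$. The pushforward is trivially Radon and weak$^*$ continuous, and supports are preserved. The recursive construction (2)$\Rightarrow$(3) is then carried out directly in $K_2$.

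Both arguments use the same chain (1)$\Rightarrow$(2)$\Rightarrow$(3)$\Rightarrow$(4)$\Rightarrow$(5)$\Rightarrow$(1). Yours simply skips the normality-based enlargement of open sets, so it is shorter and fully rigorous. It also makes explicit the fact behind the corollary: condition (2) is inherited by any compact space containing a homeomorphic copy of $K$.

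Your caveat about Miljutin is accurate. For a general uncountable $L$, the step (4)$\Rightarrow$(5) invokes Miljutin's lemma in both routes.
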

\begin{proof}
    Let $\tau_1,\tau_2$ denote the topologies of $K_1, K_2$, respectively. Let $h\colon F\to K_1$ be a homeomorphism from a closed subset $F$ of $K_2$ onto $K_1$, and by $\tau_F$ denote the subspace topology of $F$. Assume that $L$ is a compact metric space such that $\C(K_1)$ contains a positively $1$-complemented subspace $E$ which is positively isometric to $\C(L)$. Let thus
    \[\Big(\big(U_{s,i},V_{s,i},\mu_{s,i}\big)_{s\in\Lambda}\colon\ 1\le i\le m\Big)\subseteq\big(\tau_1\times\tau_1\times \P(K_1)\big)^m\]
    be a system like in Theorem \ref{characterization-almost isometric}.(3). For each $1\le i\le m$ and $s\in\Lambda$, set $U_{s,i}'=h^{-1}[U_{s,i}]$ and $V_{s,i}'=h^{-1}[V_{s,i}]$ as well as define the measure $\nu_{s,i}\in \P(K_2)$ by setting $\nu_{s,i}(B)=\mu_{s,i}(h[B\cap F])$  for each Borel subset $B$ of $K_2$---of course, by the natural identification we also have $\nu_{s,i}\in\P(F)$. Then, the system
    \[\Big(\big(U_{s,i}',V_{s,i}',\nu_{s,i}\big)_{s\in\Lambda}\colon\ 1\le i\le m\Big)\subseteq\big(\tau_F\times\tau_F\times \P(F)\big)^m\]
    satisfies conditions (a)--(c) of Theorem \ref{characterization-almost isometric}.(3) for $F$.
    Now, using the normality of $K_2$, we find open subsets $\tilde{U}_{\emptyseq,1},\ldots,\tilde{U}_{\emptyseq,m}$ of $K_2$ such that $\overline{U'_{\emptyseq,i}}\subseteq\tilde{U}_{\emptyseq,i}$ for each $1\le i\le m$ and $\overline{\tilde{U}_{\emptyseq,i}}\cap\overline{\tilde{U}_{\emptyseq,j}}=\emptyseq$ for each $1\le i\neq j\le m$. We also find for each $1\le i\le m$ an open set $\tilde{V}_{\emptyseq,i}$ in $K_2$ such that 
    \[\overline{V_{\emptyseq,i}'} \subseteq \tilde{V}_{\emptyseq,i} \subseteq \overline{\tilde{V}_{\emptyseq,i}} \subseteq \tilde{U}_{\emptyseq,i}.\]
    Further, again using the normality of $K_2$ and working in each $\tilde{V}_{\emptyseq,i}$ independently, for each $i=1,\ldots,m$ we construct by induction on the length of nodes $s\in\Lambda$ a system $\big(\tilde{U}_{s, i}, \tilde{V}_{s, i}\big)_{s \in \Lambda}$ of open subsets of $K_2$ out of the system $\big(U'_{s, i}, V'_{s, i}\big)_{s \in \Lambda}$ in such a way that ultimately the system
    \[\Big(\big(\tilde{U}_{s,i},\tilde{V}_{s,i},\nu_{s,i}\big)_{s\in\Lambda}\colon\ 1\le i\le m\Big)\subseteq\big(\tau_2\times\tau_2\times \P(K_2)\big)^m\]
    also satisfies conditions (a)--(c) of Theorem \ref{characterization-almost isometric}.(3) for $K_2$, and hence, by Theorem \ref{characterization-almost isometric}.(1), the space $\C(K_2)$ contains a positively $1$-complemented subspace which is positively isometric to $\C(L)$.
\end{proof}

Finally, if we consider isometric embeddings onto subspaces which are not necessarily complemented, then as a consequence of the proof of Theorem \ref{characterization-almost isometric} we obtain the following result, supplementary to \cite[Theorem A]{Rondos-Sobota_copies_of_separable_C(L)}.

\begin{thm}
\label{thm:embeddability-non-complemented}
Let $(K,\tau)$ be a compact space and $L$ be a metric compact space. Set $\alpha=ht(L)$ and let $m=\abs{L^{(\alpha-1)}}$ if $\alpha<\omega_1$, and $m=1$ otherwise. Then, the following assertions are equivalent:
\begin{enumerate}[(1),itemsep=1mm]
    \item $\C(K)$ contains an isometric copy of $\C(L)$.
    \item There exist a closed subset $F$ of $K$ and a continuous surjection $\rho\colon F \rightarrow L$.
    \item There exist a tree $\Lambda\subseteq\el$ of rank $\alpha$ and a system
 \[\Big(\big(U_{s, i}, V_{s, i}\big)_{s \in \Lambda}\colon\ 1\le i\le m\Big) \subseteq (\tau\times\tau)^m\]
    such that for every $1\le i\neq j\le m$ it holds $\overline{U_{\emptyseq,i}}\cap\overline{U_{\emptyseq,j}}=\emptyset$, and for each $s\neq t\in\Lambda$ and $1\le i\le m$ we have:
        \begin{itemize}
        \item $\overline{V_{s, i}}  \subseteq U_{s, i}$,
	\item if $s\in NL(\Lambda)$, then $\overline{\bigcup_{k\in\omega} U_{s \concat k, i}}\subseteq V_{s,i}$,
        \item if $s\in NL(\Lambda)$, then $\overline{U_{s \concat n, i}} \cap \overline{\bigcup_{k>n} U_{s \concat k, i}}=\emptyset$ for each $n\in\omega$,
        \item $s \prec t\ \Leftrightarrow\ \overline{U_{t, i}} \subseteq V_{s, i}$, 
        \end{itemize}
    \item There exists a PNPP isometric embedding $T\colon\C(L)\to\C(K)$.
\end{enumerate}
\end{thm}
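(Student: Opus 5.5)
I will prove the cycle of implications (1)$\Rightarrow$(2)$\Rightarrow$(3)$\Rightarrow$(4)$\Rightarrow$(1). In each step I adapt the corresponding step of the proof of Theorem \ref{characterization-almost isometric}, deleting everything that involves measures and projections.

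For (1)$\Rightarrow$(2) I invoke the classical Holszty\'nski theorem recalled in the Introduction. An isometry $T\colon\C(L)\to\C(K)$ yields a closed set $F\subseteq K$ and a continuous surjection $\rho\colon F\to L$, and nothing else is needed.

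For (2)$\Rightarrow$(3) I repeat the recursive construction from (2)$\Rightarrow$(3) of Theorem \ref{characterization-almost isometric}, simply omitting the measures $\mu_{s,i}$.
\begin{itemize}
\item If $L$ is uncountable, I pass to a copy of $2^\omega$ inside $L$ and restrict $\rho$ to its closed preimage. I then take $\Lambda=\el$ and $m=1$.
\item If $L$ is countable, I realize $L$ inside $2^\omega$ as $m$ copies $L_i\subseteq W_i$, via the maps $R_i$, and set $F_i=\rho^{-1}[L_i]$.
\end{itemize}
In both cases I build the open sets $U_{s,i},V_{s,i}$ by recursion on the length of $s$. I choose nodes $t_{s,i}$ with $l(t_{s,i})=l(s)$, $t_{s,i}\prec t_{s\concat n,i}$ and strictly increasing last coordinates, and I maintain the invariant $\rho^{-1}(R_i(t_{s,i}))\subseteq V_{s,i}$. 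The upper semicontinuity of $\rho^{-1}$ (using that $F$ is closed) guarantees that cofinitely many children have their fibres inside $V_{s,i}$. I then separate the fibre of the $n$-th child from the closed set formed by the fibres of the later children together with the fibre of the limit point $R_i(t_{s,i})$. This uses normality and the nested sets $\tilde U_{s\concat n}$, exactly as before. These choices give all the bulleted conditions. Condition (c) simply disappears, and the equivalence $s\prec t\Leftrightarrow\overline{U_{t,i}}\subseteq V_{s,i}$ holds because incomparable nodes receive disjoint sets.

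For (3)$\Rightarrow$(4) I pick Urysohn functions $f_{s,i}\in\C(K,[0,1])$ with $f_{s,i}\equiv1$ on $\overline{V_{s,i}}$ and $\supp(f_{s,i})\subseteq U_{s,i}$. I then check the two support conditions of Lemma \ref{lem:tree of continuous functions}.
\begin{itemize}
\item Inclusion: $\supp(f_{s,i})\subseteq f_{t,j}^{-1}(1)$ holds exactly when $i=j$ and $t\preceq s$, by the nesting $\overline{U_{s,i}}\subseteq V_{t,i}$.
\item Disjointness: supports are disjoint exactly when $i\neq j$ or $s\perp t$, by condition (a) and the separation of siblings.
\end{itemize}
The lemma then gives a PNPP isometric embedding $\A_\Lambda^m\to\C(K)$. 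By Theorem \ref{A_lambda is C(omega^alpha)} (countable case) or Theorem \ref{A_Lambda is C(2^omega)} (case $L=2^\omega$), $\A_\Lambda^m$ is lattice isometric to $\C(L)$. For a general uncountable $L$, I compose with the lattice-isometric copy of $\C(L)$ inside $\C(2^\omega)$ given by Miljutin's lemma, as in the last step of Theorem \ref{characterization-almost isometric}. Compositions of PNPP maps with lattice isometries remain PNPP, so this gives (4). Finally, (4)$\Rightarrow$(1) holds because a PNPP operator between AM-spaces is an isometry, by Lemma \ref{lem:ordered normed spaces and PNPP}.

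The main obstacle is the bookkeeping in (2)$\Rightarrow$(3): making sure the recursively chosen open sets satisfy the closure condition $\overline{\bigcup_k U_{s\concat k,i}}\subseteq V_{s,i}$ and the sibling separation simultaneously. Both reduce to the choice of $n_0$ via upper semicontinuity and the nested $\tilde U$ construction already carried out in the proof of Theorem \ref{characterization-almost isometric}, so I expect to be able to cite that argument essentially verbatim.
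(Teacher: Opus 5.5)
Your proposal is correct and is essentially the paper's own proof. The paper also gets (1)$\Rightarrow$(2) from Holszty\'nski's theorem, calls (4)$\Rightarrow$(1) trivial, and for (2)$\Rightarrow$(3) and (3)$\Rightarrow$(4) refers back to the proof of Theorem~\ref{characterization-almost isometric} with the measures dropped, which is exactly what you reconstruct: Urysohn functions, Lemma~\ref{lem:tree of continuous functions}, the identification of $\A_\Lambda^m$ with $\C(L)$, and Miljutin's lemma for general uncountable $L$.

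One small point to make explicit in (3)$\Rightarrow$(4): without the measures, condition (3) no longer forces $V_{s,i}\neq\emptyset$ at leaves $s$. So choose each $f_{s,i}$ with norm one, e.g.\ equal to $1$ at some point of $U_{s,i}$, which is nonempty by the ``$\Leftarrow$'' part of the last bullet when $\Lambda\neq\Upsilon$. Also, the inclusion hypothesis of Lemma~\ref{lem:tree of continuous functions} should be read for $t\prec s$ rather than $t\preceq s$, since for $t=s$ it would force $f_{s,i}$ to be a characteristic function; the lemma's proof only uses the case $t\prec s$.
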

\begin{proof}
Implication (1)$\Rightarrow$(2) follows from Holszty\'nski's theorem. The arguments for implications (2)$\Rightarrow$(3) and (3)$\Rightarrow$(4) were provided in the proof of Theorem \ref{characterization-almost isometric}. Finally, the implication (4)$\Rightarrow$(1) is trivial.
\end{proof}

\bibliography{bib}\bibliographystyle{siam}

@book {lacey,
    AUTHOR = {Lacey, H.E.},
     TITLE = {The {I}sometric {T}heory of {C}lassical {B}anach {S}paces},
    SERIES = {Die Grundlehren der mathematischen Wissenschaften},
    VOLUME = {208},
 PUBLISHER = {Springer-Verlag},
   ADDRESS = {New York},
      YEAR = {1974},
     PAGES = {x+270},
   MRCLASS = {46EXX (46BXX)},
  MRNUMBER = {0493279 (58 \#12308)},
MRREVIEWER = {James Hagler},
}

@article {lusp,
    AUTHOR = {Ludv{\'{\i}}k, P. and Spurn{\'y}, J.},
     TITLE = {Descriptive properties of elements of biduals of {B}anach spaces},
   JOURNAL = {Studia Math.},
  FJOURNAL = {Studia Mathematica},
    VOLUME = {209},
      YEAR = {2012},
    NUMBER = {1},
     PAGES = {71--99},
      ISSN = {0039-3223},
   MRCLASS = {46B10 (26A21 54H05)},
  MRNUMBER = {2914930},
MRREVIEWER = {Vicente Montesinos Santaluc{\'{\i}}a},
       DOI = {10.4064/sm209-1-6},
       URL = {http://dx.doi.org/10.4064/sm209-1-6},
}

@book {kechris,
    AUTHOR = {Kechris, A. S.},
     TITLE = {Classical {D}escriptive {S}et {T}heory},
    SERIES = {Graduate Texts in Mathematics},
    VOLUME = {156},
 PUBLISHER = {Springer-Verlag},
   ADDRESS = {New York},
      YEAR = {1995},
     PAGES = {xviii+402},
      ISBN = {0-387-94374-9},
   MRCLASS = {03E15 (03-01 03-02 04A15 28A05 54H05 90D44)},
  MRNUMBER = {1321597 (96e:03057)},
MRREVIEWER = {Jakub Jasi{\'n}ski},
       DOI = {10.1007/978-1-4612-4190-4},
       URL = {http://dx.doi.org/10.1007/978-1-4612-4190-4},
}

@book {lmns,
    AUTHOR = {Luke{\v{s}}, J. and Mal{\'y}, J. and Netuka, I. and Spurn{\'y}, J.},
     TITLE = {Integral {R}epresentation {T}heory. Applications to {C}onvexity, {B}anach {S}paces and {P}otential {T}heory},
    SERIES = {de Gruyter Studies in Mathematics},
    VOLUME = {35},
 PUBLISHER = {Walter de Gruyter \& Co.},
   ADDRESS = {Berlin},
      YEAR = {2010},
     PAGES = {xvi+715},
      ISBN = {978-3-11-020320-2},
   MRCLASS = {46-02 (28A05 31B05 31B20 46A55 52A07 54H05)},
  MRNUMBER = {2589994 (2011e:46002)},
MRREVIEWER = {Wolfhard Hansen},
}

@article {dosp,
	AUTHOR = {Dost{\'a}l, P and Spurn{\'y}, J.},
	TITLE = {The minimum principle for affine functions and isomorphisms of continuous affine function spaces},
	FJOURNAL = {Archiv der Mathematik},
	JOURNAL = {Arch. Math.},
	VOLUME = {114},
	YEAR = {2020},
	NUMBER = {1},
	PAGES = {61-70},
	ISSN = {0236-5294},
	MRCLASS = {54H05 (28A05)},
	MRNUMBER = {2725834 (2011k:54047)},
	MRREVIEWER = {Pandelis Dodos},
	DOI = {10.1007/s10474-010-9223-6},
	URL = {http://dx.doi.org/10.1007/s10474-010-9223-6},
}

@article {spurny-amh,
    AUTHOR = {Spurn{\'y}, J.},
     TITLE = {Borel sets and functions in topological spaces},
   JOURNAL = {Acta Math. Hungar.},
  FJOURNAL = {Acta Mathematica Hungarica},
    VOLUME = {129},
      YEAR = {2010},
    NUMBER = {1-2},
     PAGES = {47--69},
      ISSN = {0236-5294},
   MRCLASS = {54H05 (28A05)},
  MRNUMBER = {2725834 (2011k:54047)},
MRREVIEWER = {Pandelis Dodos},
       DOI = {10.1007/s10474-010-9223-6},
       URL = {http://dx.doi.org/10.1007/s10474-010-9223-6},
}

@article {koumou,
    AUTHOR = {Koumoullis, George},
     TITLE = {A generalization of functions of the first class},
   JOURNAL = {Topol. Appl.},
  FJOURNAL = {Topology and its Applications},
    VOLUME = {50},
      YEAR = {1993},
    NUMBER = {3},
     PAGES = {217--239},
      ISSN = {0166-8641},
     CODEN = {TIAPD9},
   MRCLASS = {26A21 (54H05)},
  MRNUMBER = {1227551 (94e:26008)},
MRREVIEWER = {Pavel Kostyrko},
       DOI = {10.1016/0166-8641(93)90022-6},
       URL = {http://dx.doi.org/10.1016/0166-8641(93)90022-6},
}

@book {dalesbookdual,
    AUTHOR  = {Dales, H. G. and Dashiell, Jr., F. K. and Lau, A. T.-M.},
    TITLE   = {Banach Spaces of Continuous Functions as Dual Spaces},
    PUBLISHER   = {Springer Nature},
    YEAR    = {2016},
    ADDRESS = {Cham}
}

@book{semadeni,
	title={Banach Spaces of Continuous Functions, Vol. 1},
	author={Semadeni, Z.},
	lccn={72176494},
	url={https://books.google.cz/books?id=qX9sAAAAMAAJ},
	year={1971},
	publisher={PWN---Polish Scientific Publishers},
    address="Warsaw"
}

@article{Gordon3,
	title={On the distance coefficient between isomorphic function spaces},
	author={Yehoram Gordon},
	journal={Israel J. Math.},
	year={1970},
	volume={8},
	pages={391-397}
}

@article {Pelczynski_separable,
    AUTHOR = {Pe{\l}czy\'nski, A.},
     TITLE = {On {$C(S)$}-subspaces of separable {B}anach spaces},
   JOURNAL = {Studia Math.},
  FJOURNAL = {Studia Mathematica},
    VOLUME = {31},
      YEAR = {1968},
     PAGES = {513--522},
      ISSN = {0039-3223,1730-6337},
   MRCLASS = {46.10},
  MRNUMBER = {234261},
MRREVIEWER = {H.\ Nakano},
       DOI = {10.4064/sm-31-5-513-522},
       URL = {https://doi.org/10.4064/sm-31-5-513-522},
}

@article{BessagaPelcynski_classification,
	author = {Bessaga, C. and Pełczyński, A.},
	journal = {Studia Math.},
	language = {eng},
	number = {1},
	pages = {53-62},
	title = {Spaces of continuous functions {(IV)} ({O}n isomorphical classification of spaces of continuous functions)},
	url = {http://eudml.org/doc/216968},
	volume = {19},
	year = {1960},
}

@incollection{Diestel_Grothendieck,
    AUTHOR = {Diestel, J.},
     TITLE = {Grothendieck spaces and vector measures},
 BOOKTITLE = {Vector and {O}perator {V}alued {M}easures and {A}pplications ({P}roc.
              {S}ympos., {A}lta, {U}tah, 1972)},
     PAGES = {97--108},
 PUBLISHER = {Academic Press, New York--London},
      YEAR = {1973},
   MRCLASS = {46G10 (46B99)},
  MRNUMBER = {338774},
MRREVIEWER = {W.\ J.\ Davis},
}

@article{Pelczynski_book_1968,
author = {A. Pełczyński},
language = {eng},
location = {Warszawa},
publisher = {Instytut Matematyczny Polskiej Akademii Nauk},
title = {Linear extensions, linear averagings, and their applications to linear topological classification of spaces of continuous functions},
url = {http://eudml.org/doc/268522},
journal = {Diss. Math.},
fjournal = {Dissertationes Mathematicae (Rozprawy Matematyczne)},
year = {1968},
volume = {58},
pages = {1--88},
}

@article{rondos_cardinal_invariants,
	title = "On isomorphisms of {$\mathcal{C}(K)$} spaces and cardinal invariants of derivatives of {K}",
	JOURNAL = {Israel J. Math},
        FJOURNAL = {Israel Journal of Mathematics},
        Volume ={267}, 
        PAGES = {871--899},
        YEAR = {2025},
	author = {Rondo\v{s}, Jakub},
}

@article{plebanek_rondos_sobota_products,    
    AUTHOR = {Plebanek, Grzegorz and Rondo\v{s}, Jakub and Sobota, Damian},
     TITLE = {Complemented subspaces of {B}anach spaces {$\mathcal{C}(K\times L)$}},
   JOURNAL = {J. Funct. Anal.},
  FJOURNAL = {Journal of Functional Analysis},
    VOLUME = {290},
      YEAR = {2026},
    NUMBER = {2},
     PAGES = {20, paper no. 111236},
      ISSN = {0022-1236,1096-0783},
   MRCLASS = {46B20 (28A33 46B26 46E15)},
  MRNUMBER = {4973608},
       DOI = {10.1016/j.jfa.2025.111236},
       URL = {https://doi.org/10.1016/j.jfa.2025.111236}
}

@article {Galego-Villamizar_continuous_maps,
    AUTHOR = {Galego, El\'{o}i Medina and Rinc\'{o}n-Villamizar, Michael A.},
     TITLE = {Continuous maps induced by embeddings of {$C_0(K)$} spaces
              into {$C_0(S,X)$} spaces},
   JOURNAL = {Monatsh. Math.},
  FJOURNAL = {Monatshefte f\"{u}r Mathematik},
    VOLUME = {186},
      YEAR = {2018},
    NUMBER = {1},
     PAGES = {37--47},
      ISSN = {0026-9255,1436-5081},
   MRCLASS = {46B03 (46B25 46E15 46E40)},
  MRNUMBER = {3788225},
MRREVIEWER = {Raymond\ H.\ Cox},
       DOI = {10.1007/s00605-016-1014-x},
       URL = {https://doi.org/10.1007/s00605-016-1014-x},
}

@article{Pelczynski_Semadeni_scattered,
author = {Pełczyński, A. and Semadeni, Z.},
journal = {Studia Math.},
fjournal = {Studia Mathematica},
language = {eng},
number = {2},
pages = {211-222},
title = {Spaces of continuous functions ({III}) ({Spaces} {${C}(\Omega)$} for {$\Omega$} without perfect subsets)},
url = {http://eudml.org/doc/216936},
volume = {18},
year = {1959},
}

@article{Botelho_Jamison_complex_Cambern-Holsztynski,
  title={Algebraic reflexivity of {$C(X,E)$} and {C}ambern's theorem},
  author={Fernanda Botelho and James E. Jamison},
  fjournal={Studia Mathematica},
  journal = {Studia Math.},
  year={2008},
  volume={186},
  pages={295-302},
  url={https://api.semanticscholar.org/CorpusID:121324376}
}

@article{Holsztynski1966,
author = {Holsztyński, W.},
journal = {Studia Math.},
fjournal = {Studia Mathematica},
language = {eng},
number = {2},
pages = {133-136},
title = {Continuous mappings induced by isometries of spaces of continuous functions},
url = {http://eudml.org/doc/217141},
volume = {26},
year = {1966},
}

@article {Bourgain1979,
    AUTHOR = {Bourgain, J.},
     TITLE = {The {S}zlenk index and operators on {$C(K)$}-spaces},
   JOURNAL = {Bull. Soc. Math. Belg. S\'{e}r. B},
  FJOURNAL = {Bulletin de la Soci\'{e}t\'{e} Math\'{e}matique de Belgique.
              S\'{e}rie B},
    VOLUME = {31},
      YEAR = {1979},
    NUMBER = {1},
     PAGES = {87--117},
      ISSN = {0037-9476},
   MRCLASS = {46B20 (46E99 47B38)},
  MRNUMBER = {592664},
}

@article {Dilworth-et-al,
    AUTHOR = {Dilworth, S. J. and Odell, E. and Schlumprecht, T. and Zs\'ak,
              A.},
     TITLE = {Coefficient quantization in {B}anach spaces},
   JOURNAL = {Found. Comput. Math.},
  FJOURNAL = {Foundations of Computational Mathematics. The Journal of the Society for the Foundations of Computational Mathematics},
    VOLUME = {8},
      YEAR = {2008},
    NUMBER = {6},
     PAGES = {703--736},
      ISSN = {1615-3375,1615-3383},
   MRCLASS = {46B20 (41A45 46B15 94A15)},
  MRNUMBER = {2461244},
MRREVIEWER = {David\ Yost},
       DOI = {10.1007/s10208-007-9002-0},
       URL = {https://doi.org/10.1007/s10208-007-9002-0},
}

@article {Grothendieck,
    AUTHOR = {Grothendieck, A.},
     TITLE = {Sur les applications lin\'eaires faiblement compactes
              d'espaces du type {$C(K)$}},
   JOURNAL = {Canad. J. Math.},
  FJOURNAL = {Canadian Journal of Mathematics},
    VOLUME = {5},
      YEAR = {1953},
     PAGES = {129--173},
      ISSN = {0008-414X,1496-4279},
   MRCLASS = {46.3X},
  MRNUMBER = {58866},
MRREVIEWER = {L.\ Nachbin},
       DOI = {10.4153/cjm-1953-017-4},
       URL = {https://doi.org/10.4153/cjm-1953-017-4},
}

@article{Gonzalez_Kania,
    author = "Gonz\'{a}lez, Manuel and Kania, Tomasz",
    title = "Grothendieck spaces: the landscape and perspectives",
    journal = "Japan. J. Math.",
    fjournal = "Japananese Journal of Mathematics",
    volume = "16",
    year = "2021",
    number = "2",
    pages = "247--313"
}

@article {Alspach_operators_on_separable,
    AUTHOR = {Alspach, Dale E.},
     TITLE = {{$C(\alpha )$} preserving operators on separable {B}anach
              spaces},
   JOURNAL = {J. Funct. Anal.},
  FJOURNAL = {Journal of Functional Analysis},
    VOLUME = {45},
      YEAR = {1982},
    NUMBER = {2},
     PAGES = {139--168},
      ISSN = {0022-1236},
   MRCLASS = {46B25 (46E25 47B38)},
  MRNUMBER = {647068},
MRREVIEWER = {Ivan\ Singer},
       DOI = {10.1016/0022-1236(82)90015-5},
       URL = {https://doi.org/10.1016/0022-1236(82)90015-5},
}

@article {Plebanek_Alarcon_CSP,
    AUTHOR = {Plebanek, Grzegorz and Salguero Alarc\'{o}n, Alberto},
     TITLE = {The complemented subspace problem for {$C(K)$}-spaces: a
              counterexample},
   JOURNAL = {Adv. Math.},
  FJOURNAL = {Advances in Mathematics},
    VOLUME = {426},
      YEAR = {2023},
     PAGES = {20, paper no. 109103},
      ISSN = {0001-8708,1090-2082},
   MRCLASS = {46E15 (46B03 46B25 54G12)},
  MRNUMBER = {4592269},
       DOI = {10.1016/j.aim.2023.109103},
       URL = {https://doi.org/10.1016/j.aim.2023.109103},
}

@article {Cembranos_complemented_c0,
    AUTHOR = {Cembranos, Pilar},
     TITLE = {{$C(K,\,E)$} contains a complemented copy of {$c\sb{0}$}},
   JOURNAL = {Proc. Amer. Math. Soc.},
  FJOURNAL = {Proceedings of the American Mathematical Society},
    VOLUME = {91},
      YEAR = {1984},
    NUMBER = {4},
     PAGES = {556--558},
      ISSN = {0002-9939,1088-6826},
   MRCLASS = {46B25 (46E40)},
  MRNUMBER = {746089},
MRREVIEWER = {Paulette\ Saab},
       DOI = {10.2307/2044800},
       URL = {https://doi.org/10.2307/2044800},
}

@book {KKLPS,
    AUTHOR = {K\k{a}kol, Jerzy and Kubi\'s, Wies{\l}aw and
              L\'opez-Pellicer, Manuel and Sobota, Damian},
     TITLE = {Descriptive {T}opology in {S}elected {T}opics of {F}unctional
              {A}nalysis---{U}pdated and {E}xpanded},
    SERIES = {Developments in Mathematics},
    VOLUME = {24},
   EDITION = {Second},
 PUBLISHER = {Springer, Cham},
      YEAR = {2025},
     PAGES = {xv+700},
      ISBN = {978-3-031-76061-7; 978-3-031-76062-4},
   MRCLASS = {46-02 (54-02)},
  MRNUMBER = {4881974},
       DOI = {10.1007/978-3-031-76062-4},
       URL = {https://doi.org/10.1007/978-3-031-76062-4},
}

@article {Seever,
    AUTHOR = {Seever, G. L.},
     TITLE = {Measures on {$F$}-spaces},
   JOURNAL = {Trans. Amer. Math. Soc.},
  FJOURNAL = {Transactions of the American Mathematical Society},
    VOLUME = {133},
      YEAR = {1968},
     PAGES = {267--280},
      ISSN = {0002-9947,1088-6850},
   MRCLASS = {46.25 (06.00)},
  MRNUMBER = {226386},
MRREVIEWER = {S.\ Newberger},
       DOI = {10.2307/1994941},
       URL = {https://doi.org/10.2307/1994941},
}

@article {Lindenstrauus_ell_infty,
    AUTHOR = {Lindenstrauss, Joram},
     TITLE = {On complemented subspaces of {$m$}},
   JOURNAL = {Israel J. Math.},
  FJOURNAL = {Israel Journal of Mathematics},
    VOLUME = {5},
      YEAR = {1967},
     PAGES = {153--156},
      ISSN = {0021-2172},
   MRCLASS = {46.10},
  MRNUMBER = {222616},
MRREVIEWER = {M.\ S.\ Ramanujan},
       DOI = {10.1007/BF02771101},
       URL = {https://doi.org/10.1007/BF02771101},
}

@article {Wolfr-Gebeily_isometries,
    AUTHOR = {El-Gebeily, Mohamad and Wolfe, John},
     TITLE = {Isometries of the disc algebra},
   JOURNAL = {Proc. Amer. Math. Soc.},
  FJOURNAL = {Proceedings of the American Mathematical Society},
    VOLUME = {93},
      YEAR = {1985},
    NUMBER = {4},
     PAGES = {697--702},
      ISSN = {0002-9939,1088-6826},
   MRCLASS = {46J15 (30H05)},
  MRNUMBER = {776205},
MRREVIEWER = {Richard\ Rochberg},
       DOI = {10.2307/2045547},
       URL = {https://doi.org/10.2307/2045547},
}

@article {Araujo-Font_isometries,
    AUTHOR = {Araujo, Jes\'us and Font, Juan J.},
     TITLE = {Linear isometries between subspaces of continuous functions},
   JOURNAL = {Trans. Amer. Math. Soc.},
  FJOURNAL = {Transactions of the American Mathematical Society},
    VOLUME = {349},
      YEAR = {1997},
    NUMBER = {1},
     PAGES = {413--428},
      ISSN = {0002-9947,1088-6850},
   MRCLASS = {46E15 (46E25)},
  MRNUMBER = {1373627},
MRREVIEWER = {Francesco\ Altomare},
       DOI = {10.1090/S0002-9947-97-01713-3},
       URL = {https://doi.org/10.1090/S0002-9947-97-01713-3},
}

@article {Galindo-Palacios_isometries,
    AUTHOR = {Moreno Galindo, Antonio and Rodr\'iguez Palacios, \'Angel},
     TITLE = {A bilinear version of {H}olszty\'nski's theorem on isometries
              of {$C(X)$}-spaces},
   JOURNAL = {Studia Math.},
  FJOURNAL = {Studia Mathematica},
    VOLUME = {166},
      YEAR = {2005},
    NUMBER = {1},
     PAGES = {83--91},
      ISSN = {0039-3223,1730-6337},
   MRCLASS = {46E15 (46B04)},
  MRNUMBER = {2108320},
MRREVIEWER = {Krzysztof\ Jarosz},
       DOI = {10.4064/sm166-1-6},
       URL = {https://doi.org/10.4064/sm166-1-6},
}

@article {Chu-Michael_isometries,
    AUTHOR = {Chu, Cho-Ho and Mackey, Michael},
     TITLE = {Isometries between {${\rm JB}^*$}-triples},
   JOURNAL = {Math. Z.},
  FJOURNAL = {Mathematische Zeitschrift},
    VOLUME = {251},
      YEAR = {2005},
    NUMBER = {3},
     PAGES = {615--633},
      ISSN = {0025-5874,1432-1823},
   MRCLASS = {46G20 (17C65 46L70 58B20)},
  MRNUMBER = {2190348},
MRREVIEWER = {J.\ M.\ Isidro},
       DOI = {10.1007/s00209-005-0826-5},
       URL = {https://doi.org/10.1007/s00209-005-0826-5},
}

@article{Argyros-Arvanitakis_averaging_operators,
  title={A characterization of regular averaging operators and its consequences},
  author={Spiros A. Argyros and Alexander D. Arvanitakis},
  fjournal={Studia Mathematica},
  journal = {Studia Math.},
  year={2002},
  volume={151},
  pages={207-226},
  url={https://api.semanticscholar.org/CorpusID:73690538}
}

@article{Pelczynski_projections,
author = {Pełczyński, A.},
fjournal = {Studia Mathematica},
journal = {Studia Math.},
language = {eng},
number = {2},
pages = {209-228},
title = {Projections in certain {B}anach spaces},
url = {http://eudml.org/doc/216957},
volume = {19},
year = {1960},
}

@article{benyamini1978extension,
  title={An extension theorem for separable {B}anach spaces},
  author={Benyamini, Y},
  fjournal={Israel Journal of Mathematics},
  journal = {Israel J. Math.},
  volume={29},
  number={1},
  pages={24--30},
  year={1978},
  publisher={Springer}
}

@article{JohnsonKaniaSchchtman2016complementedcountablesuport,
  title={Closed ideals of operators on and complemented subspaces of {B}anach spaces of functions with countable support},
  author={Johnson, William and Kania, Tomasz and Schechtman, Gideon},
  fjournal={Proceedings of the American Mathematical Society},
  journal={Proc. Amer. Math. Soc.},
  volume={144},
  number={10},
  pages={4471--4485},
  year={2016}
}

@article{koszmider2011complementation,
  title={Complementation and decompositions in some weakly {L}indel{\"o}f {B}anach spaces},
  author={Koszmider, Piotr and Zieli{\'n}ski, Przemys{\l}aw},
  fjournal={Journal of Mathematical Analysis and Applications},
  journal={J. Math. Anal. Appl.},
  volume={376},
  number={1},
  pages={329--341},
  year={2011},
  publisher={Elsevier}
}

@article{koszmider2021banach,
  title={A {B}anach space induced by an almost disjoint family, admitting only few operators and decompositions},
  author={Koszmider, Piotr and Laustsen, Niels Jakob},
  journal={Adv. Math.},
  fjournal={Advances in Mathematics},
  volume={381},
  pages={39, paper no. 107613},
  year={2021},
  publisher={Elsevier}
}

@article{koszmider2005decompositions,
  title={On decompositions of {B}anach spaces of continuous functions on {M}r{\'o}wka’s spaces},
  author={Koszmider, Piotr},
  fjournal={Proceedings of the American Mathematical Society},
  journal={Proc. Amer. Math. Soc.},
  volume={133},
  number={7},
  pages={2137--2146},
  year={2005}
}

@book{schlumprecht_phd_thesis,
  title={Limitierte Mengen in Banachr{\"a}umen},
  author={Schlumprecht, Thomas},
  year={1988},
  series = {PhD Thesis},
  publisher={Ludwig-Maximilians-Universit\"{a}t},
  address = {Munich}
}

@book{cembranos2006banach,
  title={Banach {S}paces of {V}ector-{V}alued {F}unctions},
  author={Cembranos, Pilar and Mendoza, Jos{\'e}},
  year={2006},
  SERIES = {Lecture Notes in Mathematics},
  VOLUME = {1676},
  publisher={Springer Berlin},
  address = {Heidelberg}
}

@article{Sobczyk_c0,
  title={Projection of the space ($m$) on its subspace ($c_0$)},
  author={Andrew Sobczyk},
  fjournal={Bulletin of the American Mathematical Society},
  journal={Bull. Amer. Math. Soc.},
  volume={47},
  pages={938--947},
  year={1941}
}

@article{Zippin1977_separable,
  title={The separable extension problem},
  author={M. Zippin},
  journal={Israel J. Math.},
  year={1977},
  volume={26},
  pages={372-387},
  url={https://api.semanticscholar.org/CorpusID:123150206}
}

@article{CORREA_compact_lines_sobczyk,
title = {Compact lines and the {S}obczyk property},
journal = {J. Funct. Anal.},
volume = {266},
number = {9},
pages = {5765-5778},
year = {2014},
issn = {0022-1236},
doi = {https://doi.org/10.1016/j.jfa.2014.02.007},
url = {https://www.sciencedirect.com/science/article/pii/S0022123614000615},
author = {Claudia Correa and Daniel V. Tausk}
}

@book{hajek2008biorthogonal,
  title     = {Biorthogonal Systems in {B}anach Spaces},
  author    = {H\'{a}jek, Petr and Montesinos Santaluc\'{i}a, Vicente and Vanderwerff, Jon and Zizler, V\'{a}clav},
  series    = {CMS Books in Mathematics},
  publisher = {Springer},
  address   = {New York, NY},
  year      = {2008},
  isbn      = {978-0-387-68914-2},
  doi       = {10.1007/978-0-387-68915-9}
}

@article{FriedmanRusso1982,
  author  = {Friedman, Yaakov and Russo, Bernard},
  title   = {Contractive projections on {$C_0(K)$}},
  journal = {Trans. Amer. Math. Soc.},
  year    = {1982},
  volume  = {273},
  number  = {1},
  pages   = {57--73},
  doi     = {10.1090/S0002-9947-1982-0664030-4},
  url     = {www.ams.org}
}

@article{peck1976lattice,
  title={Lattice projections on continuous function spaces},
  author={Peck, Emily Mann},
  journal = {Pacific J. Math.},
  fjournal={Pacific Journal of Mathematics},
  volume={66},
  number={2},
  pages={477--489},
  year={1976},
  publisher={Pacific Journal of Mathematics, A Non-profit Corporation}
}
\end{document}